\numberwithin{equation}{section}
\numberwithin{figure}{section}
\theoremstyle{plain}
\newtheorem{thm}{\protect\theoremname}[section]
\theoremstyle{plain}
\newtheorem{prop}[thm]{\protect\propositionname}
\theoremstyle{remark}
\newtheorem*{rem*}{\protect\remarkname}
\theoremstyle{definition}
\newtheorem{defn}[thm]{\protect\definitionname}
\theoremstyle{remark}
\newtheorem{rem}[thm]{\protect\remarkname}
\theoremstyle{plain}
\newtheorem{lem}[thm]{\protect\lemmaname}
\theoremstyle{plain}
\newtheorem{cor}[thm]{\protect\corollaryname}
\newcommand{\sinc}{\mathrm{sinc}}
\newtheorem{tm}{Theorem}[section]
\newtheorem{lemma}[tm]{Lemma}
\newcommand{\beqa}{\begin{eqnarray*}}
\newcommand{\eeqa}{\end{eqnarray*}}
\DeclareMathOperator*{\supp}{supp}
\DeclareMathOperator*{\essupp}{ess\,sup\,}
\newcommand{\field}[1]{\mathbb{#1}}
\newcommand{\bR}{\field{R}}        
\newcommand{\bN}{\field{N}}        
\newcommand{\bZ}{\field{Z}}        
\newcommand{\bC}{\field{C}}        
 \def\cH{\mathcal{H}}
 \def\cO{\mathcal{O}}
\def\rd{\bR^d}
\def\<{\left<}
\def\>{\right>}
\def\mv1{M_v^1}
\DeclareMathOperator{\trace}{tr}
\newcommand {\norm}[1]{\left\Vert #1 \right\Vert }%
\newcommand{\pwo}{PW_{[0,\Omega]}\left(A_{p}\right)}
\providecommand{\corollaryname}{Corollary}
\providecommand{\definitionname}{Definition}
\providecommand{\lemmaname}{Lemma}
\providecommand{\propositionname}{Proposition}
\providecommand{\remarkname}{Remark}
\providecommand{\theoremname}{Theorem}
\providecommand{\corollaryname}{Corollary}
\providecommand{\definitionname}{Definition}
\providecommand{\lemmaname}{Lemma}
\providecommand{\propositionname}{Proposition}
\providecommand{\remarkname}{Remark}
\providecommand{\theoremname}{Theorem}
\begin{document}
	\title[Variable Bandwidth]{Spectral Subspaces of Sturm-Liouville Operators and  Variable Bandwidth}
	
	\author{Mark Jason Celiz$^{1}$}
	\author{Karlheinz Gr\"ochenig$^{2}$ }
	\author{Andreas Klotz$^{2}$}
	\thanks{The authors were supported by the project P31887-N32 of the
		Austrian Science Fund (FWF). M.J.C. also acknowledges support from
		OeAD-GmbH (Austria's Agency for Education and Internationalisation).}
	\keywords{Paley-Wiener space, spectral subspace, variable bandwidth, reproducing kernel Hilbert
		space, sampling, density condition, Sturm-Liouville theory,
		spectral theory}
	\subjclass[2020]{34A36,34B24,34L05, 34L25, 46B15,46E22 }
	
	\address{$^{\dag}$Faculty of Mathematics \\
		University of Vienna \\
		Oskar-Morgenstern-Platz 1 \\
		A-1090 Vienna, Austria}
	\address{$^{\ddag}$Institute of Mathematics\\
		University of the Philippines\\
		Diliman, 1101 Quezon City, Philippines}
	\email{mjceliz@math.upd.edu.ph}
	\email{karlheinz.groechenig@univie.ac.at}
	\email{andreas.klotz@univie.ac.at}
	\begin{abstract}
		We study spectral subspaces of the Sturm-Liouville
		operator $f \mapsto -(pf')'$ on $\bR$, where $p$ is a positive,
		piecewise constant function.  Functions in these subspaces can be
		thought of as having a local bandwidth determined by
		$1/\sqrt{p}$. Using the spectral theory of Sturm-Liouville
		operators, we make  the reproducing kernel of these spectral
		subspaces more explicit and compute it completely  in certain
		cases. As a contribution to sampling theory,  we then  prove necessary density conditions for
		sampling and interpolation in these subspaces and 
		determine the critical density that separates sets of stable
		sampling from sets of interpolation. 
	\end{abstract}
	
	\maketitle
	
	\section{Introduction}

	In~\cite{cpam} we proposed a new notion of variable bandwidth that
	connects the theory of Sturm-Liouville operators on $\bR $  with
	notions from signal processing. Roughly speaking, bandwidth is the
	largest frequency $\Omega$ in a signal (or function) $f$ on the real line,  so
	that $f$ can be represented as $f(x) = \int _{-\Omega } ^\Omega
	\hat{f}(\lambda ) e^{i\lambda x} \, d\lambda $ via the inverse Fourier
	transform. Intuitively it makes sense to speak of  a 
	maximum frequency near a time $x$  or, in other words,  of  variable
	bandwidth. However, a precise mathematical formulation of variable
	bandwidth seems difficult. The engineering literature offers several
	informal definitions~\cite{Brueller,clarkpalmerlawrence,Horiuchi1968,weioppenheim}, but so far none of them has become
	accepted. The mathematical approach  in
	~\cite{aceskafeichtinger1,aceskafeichtinger2} only yields a class of new norms on Sobolev spaces
	and lacks the fundamental features of bandwidth. The attempt of Kempf
	and Martin~\cite{kempf} uses self-adjoint extensions of a
	differential operator for an algorithmic approach to variable
	bandwidth.
	
	Our idea in~\cite{cpam} was to define variable bandwidth by means of
	the spectral subspaces of a Sturm-Liouville operator on $\bR
	$ which is closely related to the work of Pesenson and Zayed on abstract Paley-Wiener spaces~(\cite{pesensonzayed, Pesenson2001}, see also \cite{TuanZayed}). Precisely, let $p>0$ be a strictly positive, sufficiently smooth
	function on $\bR $ and consider the Sturm-Liouville operator $A_p f
	= -(pf')'$ on a suitable domain in $L^2(\bR)$ so that $A_p$ is a
	positive self-adjoint operator. The spectral theorem asserts the
	existence of a projection-valued measure $\Lambda \mapsto \chi _\Lambda
	(A_p)$. In ~\cite{cpam} we argued that the range of such an orthogonal
	projection is a good model for variable bandwidth and defined the
	Paley-Wiener space (in engineering language this is a space of bandlimited
	functions) as
	$$
	PW_{[0,\Omega]}\left(A_{p}\right)=
        \chi_{[0,\Omega]}\left(A_{p}\right)L^{2}\left(\bR\right)\,  . 
	$$
For a general Borel set $\Lambda \subset [0,\infty )$ of finite
measure, we write $PW_\Lambda (A_p) = \chi _{\Lambda } (A_p)L^2(\bR )$
for the corresponding spectral subspace.

Taking $p\equiv 1$ and $A_p = - \frac{d^2}{dx^2}$, we see that the
	Fourier transform of $A_pf$ is  
	$\widehat{A_pf}(\xi ) = \xi ^2 \hat{f}(\xi )$. Thus spectral values
	$\xi ^2 \in [0,\Omega ]$ correspond to frequencies $\xi \in [-\sqrt
	\Omega, \sqrt \Omega ]$.  Consequently, $PW_{[0,\Omega ]}(-\frac{d^2}{dx^2}) =
	\{f\in L^2(\bR ): \supp \hat{f} \subseteq [-\sqrt \Omega , \sqrt
	\Omega ]\}$ and coincides with  the classical space of bandlimited functions with
	maximum frequency $\sqrt \Omega $.\footnote{Since we deal with a second order differential
		operator,  the largest frequency $\sqrt \Omega $ corresponds to the largest spectral value
		$\Omega $. Since we use a spectral definition of variable bandwidth,
		we prefer the largest eigenvalue as the parameter in the
		definition.}
	
	In \cite{cpam} we proved several general theorems about the general
	Paley-Wiener spaces $PW_{[0,\Omega]}\left(A_{p}\right)$ to support the
	interpretation of variable bandwidth. Among these were  a general,
	almost optimal sampling theorem for the reconstruction of a function in
	$PW_{[0,\Omega]}\left(A_{p}\right)$ from its samples $\{f(x): x\in
	X\}$ on a discrete  set $X\subseteq \bR $ and a necessary density
	condition for sampling. 
	These results indicate  that a function in 
	$PW_{[0,\Omega]}\left(A_{p}\right)$ behaves like a function with  maximum
	frequency   $(\Omega / p(x))^{1/2}$  near $x$. Thus the function $p$
	defining the Sturm-Liouville operator parametrizes the local
	bandwidth.

	Despite the success of the theory and an explicit iterative
	reconstruction algorithm, an intended numerical implementation faces
	some immediate and serious difficulties. Even the most naive numerical
	treatment requires some knowledge of the reproducing kernel, that is,
	of those functions in $PW_{[0,\Omega]}\left(A_{p}\right)$ that determine
	the point evaluations $f(x) = \int _{\bR } f(y) \overline{k_x(y)} \,
	dy = \langle f, k_x \rangle $. Since $\pwo $ is a reproducing kernel
	Hilbert space~\cite[Prop.~3.3]{cpam} these functions do exist.  But what
	exactly is this reproducing kernel?
	
	Any deeper investigation of the Paley-Wiener spaces  and any attempt
	of a numerical implementation of a sampling reconstruction must use
	some knowledge about the reproducing kernel. In a sense the situation
	is similar to the theory of spaces of analytic functions (Bergman
	spaces), where some of the deepest results  hinge on delicate properties of
	the reproducing kernel (see~\cite{fefferman} for a celebrated example).  
	
	Although the spectral projections  are
	natural objects associated to a  Sturm-Liouville operator,
	surprisingly little explicit  knowledge seems to be  available. A
	general formula can be derived from spectral theory as follows. 
	Let  $\Phi(\lambda,\cdot)=(\Phi ^+(\lambda,\cdot),\Phi ^-(\lambda,\cdot))$
	be  a suitable  fundamental system of solutions of $A_{p}f=-(pf')'=\lambda f$
	on $\bR$ that depends continuously on $\lambda\in\bR$. If
        $\Lambda \subseteq [0,\infty )$ has finite Lebesgue measure,  then there
	exists a $2\times2$ positive matrix-valued Borel measure $\mu $ constructed
	from $\Phi(\lambda,\cdot)$ (see, e.g.,~\cite[Sec.~14]{weidmann2003lineare2} for details), such that the 
	reproducing kernel of  $PW_{\Lambda}(A_{p})$ is given by  
	\begin{equation}
		k_{\Lambda}(x,y)=k_x(y) =
		\int_{\Lambda}\overline{\Phi(\lambda,x)}\cdot\Phi(\lambda,y)\,d\mu(\lambda)=\langle\Phi(\cdot,y),\Phi(\cdot,x)\rangle_{L^{2}(\Lambda,\mu)}
		\, . \label{kernelformula}
	\end{equation}
	The inner product in $L^{2}([0,\Omega ] ,\mu)$ is
	\begin{align*}
		\langle F,G\rangle_{L^{2}(\bR,\mu)}=\int_{\Lambda}F(\lambda)\cdot\overline{G(\lambda)}\,d\mu(x)=\sum_{j,k=1}^{2}\int_{\Lambda}F_{j}(\lambda)\overline{G_{k}(\lambda)}\,d\mu_{jk}(\lambda)
	\end{align*}
	where the   $\mu_{jk}$'s are the component measures of $\mu$. See \cite[Sec.~XIII.5]{Dunford1988linear}
	for more details on matrix measures. 
	
	In this classical formula the kernel still depends on the knowledge of
	a fundamental solution and is far from explicit.
	
	In this paper we study the Sturm-Liouville operator $A_p f  = -(pf')'$
	for a piecewise constant, positive function $p$ and  make the reproducing kernel $k$
	as explicit as possible. Let $t_1 < t_2 < \dots < t_n$ be the position
	of the jumps of $p$ and set $t_0= -\infty $ and $t_{n+1} = +\infty
	$. Then the step function
	\begin{equation}
		\label{pwp}
		p(x) = \sum _{j=0}^n p_j \chi _{(t_j, t_{j+1}]}(x)  
	\end{equation}
	is our model to parametrize variable bandwidth. Indeed, this is the
	most natural assumption, since it can be shown that for such a 
	parametrizing function the restriction of $f\in \pwo $ to an interval
	$I_j = [t_j, t_{j+1})$ is equal to the restriction of a classical
	bandlimited function with bandwidth $(\Omega / p_j)^{1/2}$. This is
	plausible because the operator $A_p$ restricted to $L^2(I_j)$ is just
	$-p_j \frac{d^2}{dx^2}$ with eigenfunctions $e^{\pm i \lambda x}$
	corresponding to the eigenvalues $p_j \lambda ^2$. Consequently a
	spectral value $p_j \lambda ^2 \leq \Omega $ corresponds to a
	frequency $|\lambda | \leq (\Omega/p_j)^{1/2}$, cf. \cite[Prop.~3.5]{cpam}.
	
	For this natural class of  parametrizing functions we can describe the
	reproducing kernel in a more explicit fashion than in
	\eqref{kernelformula}. Let $\Phi(\lambda,\cdot)=(\Phi ^+(\lambda,\cdot),\Phi ^-(\lambda,\cdot))$
	be  a   fundamental system of solutions of $A_{p}f=-(pf')'=\lambda f$
	that depends analytically on $\lambda\in \bC \setminus (-\infty ,
	0]$. Then the reproducing kernel of $PW_\Lambda (A_p) $ is as follows. 
	
	\begin{thm}\label{kernintro}
		Let $\Lambda  \subset\bR_{0}^{+}$  and $p$ be  a
		piecewise constant, positive  function as in \eqref{pwp}. Then $PW_{\Lambda}(A_{p})$
		is a reproducing kernel Hilbert space with a  kernel of the form 
		\begin{align*}
			k_{\Lambda}(x,y)=\dfrac{1}{2\pi} \sum _{j,k = 1}^n \chi _{(t_j,
				t_{j+1}]}(x) \chi _{(t_k,
				t_{k+1}]}(y)  \int_{\Lambda^{1/2}}\dfrac{\vartheta
				_{jk} (u)}{\kappa(u)}\,du,\quad x,y\in\bR.
		\end{align*}
		All functions $\vartheta _{jk}$ and $\kappa $ depend only on the
		parameters 
		$\{p_j: j=1,\dots , n\}$ and the jumps $\{t_j\}$ of $p$. Furthermore,
		each  function  $\vartheta _{jk}$ and $\kappa$ are  almost periodic polynomials 
		with real coefficients. 
	\end{thm}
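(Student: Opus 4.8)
The plan is to convert the abstract formula~\eqref{kernelformula} into the explicit one by computing, for piecewise constant $p$, both a convenient analytic fundamental system $\Phi(\lambda,\cdot)$ and the spectral matrix measure $\mu$, and then performing the substitution $\lambda=u^{2}$, under which $\Lambda$ is replaced by $\Lambda^{1/2}=\{u\ge0:u^{2}\in\Lambda\}$. \emph{The fundamental system.} On each interval of constancy $(t_{j},t_{j+1})$ the equation $-(p_{j}f')'=u^{2}f$ has solution space spanned by $e^{\pm iux/\sqrt{p_{j}}}$. I would fix such a pair on $(-\infty,t_{1})$ and propagate it across the jumps using the transfer matrices forced by the domain conditions of $A_{p}$, namely continuity of $f$ and of $pf'$ at each $t_{j}$; in the matching of $pf'$ the factor $u$ cancels on both sides, so each transfer matrix has entries of the form $\sum_{m}c_{m}e^{iu\omega_{m}}$ with real coefficients $c_{m}$ (rational in the $\sqrt{p_{l}}$) and real frequencies $\omega_{m}$ built from the $t_{l}/\sqrt{p_{l}}$, i.e.\ almost periodic polynomials with real coefficients. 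Hence, for $x$ in a fixed interval $(t_{j},t_{j+1}]$, each component of $\Phi(u^{2},x)$ is such an almost periodic polynomial in $u$, with frequencies affine in $x$, and $\Phi$ extends analytically to $\lambda\in\bC\setminus(-\infty,0]$ as required.

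\emph{The spectral measure.} Since $A_{p}$ coincides with $-p_{0}\,d^{2}/dx^{2}$ on $(-\infty,t_{1})$ and with $-p_{n}\,d^{2}/dx^{2}$ on $(t_{n},\infty)$, the Weyl solutions $\psi_{-},\psi_{+}$ that are square integrable near $-\infty$, resp.\ $+\infty$, are, for $\lambda=u^{2}>0$, propagated pure exponentials and so never square integrable; thus $A_{p}$ has no eigenvalue in $(0,\infty)$, the essential spectrum $[0,\infty)$ is purely absolutely continuous, and by Stone's formula the spectral kernel on $(0,\infty)$ equals $\tfrac1\pi\operatorname{Im}G(x,y,u^{2}+i0)$ with $G(x,y,\lambda)=\psi_{-}(x_{<},\lambda)\psi_{+}(x_{>},\lambda)/W(\lambda)$ and $W=p(\psi_{-}'\psi_{+}-\psi_{-}\psi_{+}')$ the ($x$-independent) Wronskian. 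Evaluating $W$ on $(-\infty,t_{1})$ gives $W(u^{2})=-2iu\sqrt{p_{0}}\,\alpha(u)$ for an almost periodic polynomial $\alpha$ with real coefficients and no zeros on $(0,\infty)$ (a zero would produce an eigenvalue). Using $\operatorname{Im}z=(z-\bar z)/(2i)$ together with the identity $\overline{f(u)}=f(-u)$ valid for all the exponential objects in play, the common denominator becomes $\alpha(u)\alpha(-u)=|\alpha(u)|^{2}$, a nonnegative almost periodic polynomial with real coefficients, while the remaining factor $u$ in the denominator is cancelled exactly by the Jacobian $d\lambda=2u\,du$. This leaves a density of the form $\tfrac1{2\pi}\,(\text{almost periodic polynomial})/\kappa(u)$ with $\kappa=|\alpha|^{2}$.

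\emph{Assembly.} Substituting the two computations into~\eqref{kernelformula} and changing variables, for $x\in(t_{j},t_{j+1}]$ and $y\in(t_{k},t_{k+1}]$ the factor $\overline{\Phi(u^{2},x)}\cdot\Phi(u^{2},y)$ times the numerator of the density is an almost periodic polynomial $\vartheta_{jk}$ in $u$; writing it as $Z(u)+\overline{Z(u)}=2\operatorname{Re}Z(u)$ exhibits it as real-valued with real coefficients, its data governed by $\{p_{j}\}$ and $\{t_{j}\}$. Summing the pieces against $\chi_{(t_{j},t_{j+1}]}(x)\chi_{(t_{k},t_{k+1}]}(y)$ yields the asserted formula, the overall $\tfrac1{2\pi}$ being the constant isolated above, and $PW_{\Lambda}(A_{p})$ is a reproducing kernel Hilbert space by \cite[Prop.~3.3]{cpam}, so the displayed function is its reproducing kernel. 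The main obstacle, as I see it, is the honest execution of the middle step: proving that $\mu$ is purely absolutely continuous with no embedded eigenvalues and no singular continuous part, obtaining the density in closed form (via the limiting absorption principle / Weyl--Kodaira theory and careful bookkeeping of the Weyl solutions), and checking that after the Jacobian cancellation the denominator $\kappa$ is a genuine almost periodic polynomial and not merely a rational expression in $u$ and the $e^{iu\omega}$; the transfer-matrix computation of $\Phi$ and the final summation are then routine, with only minor additional care needed at the spectral edge $\lambda=0$, which anyway carries no measure.
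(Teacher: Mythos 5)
Your proposal is essentially correct and follows the same overall architecture as the paper: an explicit fundamental system built by propagating local exponentials $e^{\pm iq_jux}$ across the jumps with $2\times2$ transfer matrices (the paper's $L_k,R_k$, whose entries are almost periodic polynomials in $\sqrt z$ with real coefficients), an explicit spectral density, and a final assembly over the rectangles $I_j\times I_l$. The one genuinely different step is how you obtain the spectral density: the paper computes the $2\times2$ matrix measure $\mu$ via the Weyl--Titchmarsh formula applied to the $m^{\pm}$-matrices of the resolvent kernel, arriving at the diagonal matrix $d\mu(\lambda)=\tfrac{1}{4\pi\kappa(\sqrt\lambda)}\operatorname{diag}(q_0^{-1},q_n^{-1})\,\lambda^{-1/2}d\lambda$ with $\kappa(\sqrt\lambda)=|a_0^+(\lambda)|^2/q_0^2$, and then substitutes into the general kernel formula; you instead take the scalar Stone's-formula route $\tfrac1\pi\operatorname{Im}G(x,y,\lambda+i0)$ with $G=\Phi^+(x_>)\Phi^-(x_<)/W_p$. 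The two are equivalent (your $-2iu\sqrt{p_0}\,\alpha(u)$ is exactly the paper's $W_p(\Phi^+,\Phi^-)=-2i\sqrt z\,a_0^+(z)/q_0$), and your route reaches the kernel more directly while bypassing the matrix-measure formalism; the price is that your numerator appears as $2\operatorname{Re}\bigl(\overline{a_0^+}\,\Phi^+(x)\Phi^-(y)\bigr)$ and must be reconciled with the symmetric form $\vartheta=q_0^{-1}\overline{\Phi^+(x)}\Phi^+(y)+q_n^{-1}\overline{\Phi^-(x)}\Phi^-(y)$ used in the paper, which is where the Wronskian identities of Lemma~\ref{wronskthm} enter.

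One point needs shoring up. For the displayed integral to converge and for point evaluations to be bounded (hence for the reproducing-kernel claim), it is not enough that your $\kappa=|\alpha|^2$ is nonvanishing on $(0,\infty)$: an almost periodic polynomial with no zeros need not be bounded away from zero, and a uniform lower bound is what is actually used. The paper obtains $\kappa(u)\ge (q_0q_n)^{-1}>0$ from the constancy of the modified Wronskians of $(\Phi^\pm,\overline{\Phi^\pm})$ (equivalently the $SU(1,1)$ determinant identity $q_n(|a_k^+|^2-|b_k^+|^2)=q_k$), and combines this with the uniform boundedness of $\Phi$ on $\bR^+\times\bR$ (Lemma~\ref{unifbdd}, via submultiplicativity of the transfer-matrix norms) to prove boundedness of $f\mapsto f(x)$ directly; citing \cite[Prop.~3.3]{cpam} does not cover this, since that result is proved there for smooth $p$ via a Liouville transform that is unavailable here. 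With that lower bound added, your argument closes.
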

	
	For  the spectrum $\Lambda = [0,\Omega ]$ and $n=2$, i.e., $p$ has two jumps and partitions $\bR $ into three
	intervals of constant local bandwidth, $\kappa$ can be explicitly
	computed to be (set $q_k = p_k^{-1/2}$, $k = 0,1$)
	\begin{equation}
		\label{eq:cc1}
		\kappa (u) =
		\tfrac{1}{16q_{0}^{2}}\left[\left(1+\tfrac{q_{0}}{q_{1}}\right)^{2}\left(1+\tfrac{q_{1}}{q_{2}}\right)^{2}+\left(1-\tfrac{q_{0}}{q_{1}}\right)^{2}\left(1-\tfrac{q_{1}}{q_{2}}\right)^{2}\right]
		+
		\tfrac{1}{8q_{0}^{2}}\left(1-\tfrac{q_{0}^{2}}{q_{1}^{2}}\right)\left(1-\tfrac{q_{1}^{2}}{q_{2}^{2}}\right)
		\,  \cos 2q_{1}T u \, .
	\end{equation}
	The kernel  $k$ is then obtained by evaluating  parameter integrals of
	the form 
	\begin{align*}
		J(s)=\int_{\Lambda^{1/2}}\frac{e^{isu}}{C+ K \cos \zeta u}\,du,\quad s\in\bR.
	\end{align*}
	This integral seems to be a new type of special function. The kernel
	can then be written explicitly by means of this function $J$. This
	will be done in Theorem~\ref{ker3pardami}.  
	
	In the second part of the paper  we derive necessary density conditions for sampling and
	interpolation in $PW_{\Lambda}(A_{p})$ in the style of Landau
	\cite{landau1967} for the case of a piecewise constant
	parametrizing  function $p$. As
	in \cite[Sec.~6]{cpam}, we consider the positive measure 
	\begin{align}
		\mu_{p}([a,b])=\int_a^b\dfrac{dx}{\sqrt{p(x)}}\label{mup} \, ,
	\end{align}
	which is determined
	by the parametrizing function $p$. We also recall the following notions.
	We say $X$ is a \emph{set of stable sampling} for $PW_{\Lambda}(A_{p})$
	if there exist $C_{1},C_{2}>0$ such that 
	\begin{align*}
		C_{1}\norm{f}_{2}^{2}\leq\sum_{x\in X}|f(x)|^{2}\leq
		C_{2}\norm{f}_{2}^{2} \quad \text{ for all } f\in
		PW_{\Lambda}(A_{p})\, .
	\end{align*}
	Furthermore, $X$ is a
	\emph{set of interpolation} for $PW_{\Lambda}(A_{p})$ if for every
	$c\in\ell^{2}(X)$ there exists $f\in PW_{\Lambda}(A_{p})$ such that
	$f(x)=c_{x}$ for all $x\in X$. Using a modified form of the upper
	and lower Beurling densities via $\mu_{p}$, we derive the following
	version of the density theorem. 
	\begin{thm}[Density theorem in $PW_{\Lambda}(A_{p})$]
		\label{densintro}
		Let 
		$p$ be a piecewise constant function and $\Omega >0$. 
		\begin{enumerate}
			\item If $X$ is a set of stable sampling for $PW_{[0,\Omega ]}(A_{p})$, then
			\begin{align*}
				D_{p}^{-}(X)=\liminf_{r\to\infty}\inf_{x\in\bR}\dfrac{\#(X\cap
					B_{r}(x))}{\mu_{p}(B_{r}(x))}\geq\frac{\Omega^{1/2}}{\pi}.
			\end{align*}
			\item If $X$ is a set of interpolation for $PW_{[0,\Omega]}(A_{p})$, then
			\begin{align*}
				D_{p}^{+}(X)=\limsup_{r\to\infty}\sup_{x\in\bR}\dfrac{\#(X\cap
					B_{r}(x))}{\mu_{p}(B_{r}(x))}\leq\frac{\Omega ^{1/2}}{\pi}.
			\end{align*}
		\end{enumerate}
	\end{thm}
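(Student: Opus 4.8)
The plan is to run Landau's dimension-counting argument \cite{landau1967} inside the reproducing kernel Hilbert space $\pwo$, in the style of \cite[Sec.~6]{cpam}, with the explicit kernel of Theorem~\ref{kernintro} supplying the quantitative input that pins down the precise critical value $\Omega^{1/2}/\pi$. The first move is a change of variables that removes the weight $p$: put $\phi(x)=\int_0^x p(t)^{-1/2}\,dt$, which is an increasing bi-Lipschitz bijection of $\bR$ because $p$ is bounded above and below. The unitary $(Uf)(\xi)=p(\phi^{-1}(\xi))^{1/4}f(\phi^{-1}(\xi))$ conjugates $A_p$ to a self-adjoint operator $H=UA_pU^{-1}$ that equals $-d^2/d\xi^2$ off the compact set $[\phi(t_1),\phi(t_n)]$ and carries fixed point interactions at the finitely many points $\phi(t_1)<\dots<\phi(t_n)$; it maps $\pwo$ isometrically onto $PW_{[0,\Omega]}(H):=\chi_{[0,\Omega]}(H)L^2(\bR)$. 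Since $|f(x)|^2=p(x)^{-1/2}|(Uf)(\phi(x))|^2$ with $p^{-1/2}$ bounded above and below, $X$ is a set of stable sampling (resp.\ interpolation) for $\pwo$ if and only if $\phi(X)$ is such a set for $PW_{[0,\Omega]}(H)$, and $\phi$ pushes $\mu_p$ forward to Lebesgue measure, so $D_p^{\pm}(X)$ is the ordinary upper, resp.\ lower, Beurling density of $\phi(X)$. Theorem~\ref{densintro} is therefore equivalent to the assertion that the critical density for sampling and for interpolation in $PW_{[0,\Omega]}(H)$ equals $\Omega^{1/2}/\pi$, the same value as for the classical Paley--Wiener space $PW_{[-\Omega^{1/2},\Omega^{1/2}]}$ that $H$ imitates away from the perturbation region.

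\textbf{Weyl law and plunge estimate.} Let $\tilde k$ be the reproducing kernel of $PW_{[0,\Omega]}(H)$, obtained from the kernel $k_{[0,\Omega]}$ of Theorem~\ref{kernintro} by the change of variables $\phi$, and for an interval $B_r(y)$ let $T_r^y=P_H M_{\chi_{B_r(y)}}P_H$ be the concentration operator on $PW_{[0,\Omega]}(H)$, a positive trace class operator with $\trace T_r^y=\int_{B_r(y)}\tilde k(\xi,\xi)\,d\xi$ and eigenvalues $1\ge\lambda_1(r,y)\ge\lambda_2(r,y)\ge\cdots\ge0$. By Theorem~\ref{kernintro} the leading term of $\tilde k(\xi,\eta)$ is the classical sinc kernel $\dfrac{\sin(\Omega^{1/2}(\xi-\eta))}{\pi(\xi-\eta)}$, while the remaining terms are governed by the parameter integrals $J$ and by the almost periodic polynomials $\vartheta_{jk},\kappa$ (with $\kappa$ bounded away from $0$), and these deviate from their mean values only inside an $O(1)$-neighbourhood of the points $\phi(t_j)$. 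This yields, uniformly in $y\in\bR$ as $r\to\infty$,
\begin{align*}
\trace T_r^y=\frac{\Omega^{1/2}}{\pi}\,|B_r(y)|+O(1),\qquad \trace\bigl(T_r^y-(T_r^y)^2\bigr)=\int_{B_r(y)}\!\int_{\bR\setminus B_r(y)}|\tilde k(\xi,\eta)|^2\,d\eta\,d\xi=O(\log r),
\end{align*}
the second estimate being the Landau--Widom bound for the band-limiting kernel, with the finitely many point interactions contributing only a bounded error. Hence, for each fixed $\varepsilon\in(0,\tfrac12)$, the plunge region is asymptotically empty:
\begin{align*}
\#\{i:\lambda_i(r,y)>\varepsilon\}=\#\{i:\lambda_i(r,y)>1-\varepsilon\}+O(\log r)=\frac{\Omega^{1/2}}{\pi}\,|B_r(y)|+o\bigl(|B_r(y)|\bigr),
\end{align*}
uniformly in $y$.

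\textbf{The two inequalities.} With these facts the rest is Landau's soft argument. Suppose $Y:=\phi(X)$ is a set of stable sampling for $PW_{[0,\Omega]}(H)$. Fix $\varepsilon$ small (depending on the sampling bounds) and, using the decay of $\tilde k$ as a homogeneous approximation property, choose $\rho=\rho(\varepsilon)$ so large that each $f$ in the span $V$ of the eigenfunctions of $T_r^y$ with eigenvalue $>1-\varepsilon$ loses at most a fraction $\varepsilon$ of its sampled energy outside $B_{r+\rho}(y)$; then the sampling inequality makes $f\mapsto(f(\xi))_{\xi\in Y\cap B_{r+\rho}(y)}$ injective on $V$, so $\#(Y\cap B_{r+\rho}(y))\ge\dim V=\#\{i:\lambda_i(r,y)>1-\varepsilon\}$. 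By the Weyl law the right side is $\tfrac{\Omega^{1/2}}{\pi}|B_r(y)|+o(|B_r(y)|)$; since $|B_{r+\rho}(y)|=|B_r(y)|+O(1)$, dividing, taking $\inf_y$ and then $\liminf_{r\to\infty}$ gives $D^-(Y)\ge\Omega^{1/2}/\pi$. For interpolation we dualize, as in \cite{landau1967}: if $Y$ is interpolating then $\{\tilde k_\xi:\xi\in Y\}$ is a Riesz sequence, hence $Y$ is uniformly separated and Bessel; projecting the $\tilde k_\xi$ with $\xi\in Y\cap B_r(y)$ onto the span of the top $\#\{i:\lambda_i(r+\rho,y)>\varepsilon\}$ eigenfunctions of $T_{r+\rho}^y$ preserves linear independence once $\rho$ is large, whence $\#(Y\cap B_r(y))\le\#\{i:\lambda_i(r+\rho,y)>\varepsilon\}=\tfrac{\Omega^{1/2}}{\pi}|B_r(y)|+o(|B_r(y)|)$ and $D^+(Y)\le\Omega^{1/2}/\pi$. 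Undoing the change of variables from the first step converts these into $D_p^-(X)\ge\Omega^{1/2}/\pi$ and $D_p^+(X)\le\Omega^{1/2}/\pi$.

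\textbf{Main obstacle.} The delicate point is the second step: establishing the Weyl law and, above all, the plunge estimate \emph{uniformly in the center $y$}. Concretely one must show that the finitely many discontinuities of $p$---equivalently, the point interactions of $H$---perturb both the diagonal value and the off-diagonal $L^2$-mass of the spectral kernel only within an $O(1)$-window about their fixed locations, so that globally the kernel is a classical sinc kernel of local bandwidth $(\Omega/p(x))^{1/2}$ modulo a uniformly bounded correction. This is exactly where Theorem~\ref{kernintro} is indispensable: it exhibits the kernel as an almost periodic polynomial combination of the special functions $J$, with the genuine sinc kernel as the mean-value (leading) term and $\kappa$ bounded below, which simultaneously gives the correct diagonal mean and the $O(\log r)$ tail bound with all constants independent of $y$. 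The remaining geometric input, the comparability $\mu_p(B_{r+\rho}(x))/\mu_p(B_r(x))\to1$ uniformly in $x$, is immediate from the two-sided bounds on $p$.
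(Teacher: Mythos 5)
Your proposal follows a genuinely different route from the paper. You first apply the Liouville-type substitution $\xi=\phi(x)=\int_0^x p^{-1/2}$, which turns $A_p$ into $-d^2/d\xi^2$ with finitely many point interactions and converts $\mu_p$ into Lebesgue measure, and then you rerun Landau's concentration-operator argument (trace asymptotics, Landau--Widom plunge bound, dimension counting) from scratch. The paper deliberately avoids transforming the operator (the introduction notes that for non-smooth $p$ the Liouville/scattering route of \cite{cpam} fails, though your purely measure-theoretic use of $\phi$ to transport sampling sets and densities is legitimate) and instead invokes the abstract density theorem for reproducing kernel Hilbert spaces of \cite{fuehr} (Theorem~\ref{fuehrthm} together with Lemma~\ref{lem:changemeas}), reducing all the work to three kernel estimates: bounded diagonal (Lemma~\ref{bdddiag}), weak localization and the homogeneous approximation property (Lemmas~\ref{WLemma}, \ref{HAP}, both resting on the off-diagonal decay of Lemma~\ref{kerestlem}), and the averaged trace $\mathrm{tr}_p^{\pm}=\Omega^{1/2}/\pi$ (Theorem~\ref{limmup}). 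Your approach is self-contained where the paper outsources the combinatorics to \cite{fuehr}, but both proofs ultimately hinge on the same two quantitative inputs about $k_{[0,\Omega]}$.

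There is one concrete gap. Your Weyl law and plunge estimate both rest on the assertion that the kernel is the local sinc kernel ``modulo a uniformly bounded correction'' whose deviations live in an $O(1)$-neighbourhood of the jumps. Neither half of this is automatic from Theorem~\ref{kernintro}. For the off-diagonal bound $|k_{\Lambda}(x,y)|\lesssim(1+|x-y|)^{-1}$ (which you need for $\trace(T-T^2)=O(\log r)$ and for the injectivity/Bessel steps), the term-by-term argument ``each $\alpha_k J(\beta_k(x,y))$ decays because $J(s)=O(1/s)$'' fails on the block $x\in I_0$, $y\in I_n$: there the frequencies include $\beta_k(x,y)=c_k\pm(q_0x+q_ny)$, which can stay bounded while $|x-y|\to\infty$. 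The paper's Lemma~\ref{kerestlem} must invoke the Wronskian identities \eqref{wron1} and \eqref{wron3} to show that the coefficient of the dangerous exponential $e^{i(q_0x+q_ny)u}$ vanishes identically; without that cancellation your second step has no proof. Similarly, the diagonal $k_{\Lambda}(y,y)$ is not ``sinc plus a compactly supported perturbation'': by Lemma~\ref{lemkxx} it contains the oscillatory term $\tfrac1\pi\mathrm{Re}\,\mathcal{F}\bigl(h_j^{(2)}\kappa^{-1}\chi_{\Lambda^{1/2}}\bigr)(2q_jy)$ on every interval, including the unbounded ones, so the trace error over $B_r(y)$ is not $O(1)$; it is only $o(r)$ (the paper gets $O(r^{1/2})$ by Cauchy--Schwarz/Plancherel in Theorem~\ref{limmup}), which fortunately is all your displayed plunge estimate actually uses. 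With Lemma~\ref{kerestlem} and Theorem~\ref{limmup} supplied, the rest of your argument goes through.
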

	Although the density theorem does not make any reference to
	the reproducing kernel,  Theorem~\ref{kernintro} is substantial for
	its proof. In the proof of Theorem~\ref{densintro} we will apply a
	general density theorem for sampling and interpolation in 
	reproducing kernel Hilbert spaces \cite{fuehr}. This statement
	identifies an averaged trace of the reproducing kernel as the
	critical density. Based on the semi-explicit formula of
	Theorem~\ref{kernintro} we will show that the averaged trace equals
	$\sqrt{\Omega }/\pi $ (see Theorem~\ref{limmup}).
	
	Theorem~\ref{densintro} holds for much more general spectra $\Lambda
	$. In fact, if $\Lambda \subset \bR ^+$ has finite Lebesgue measure, then
	 the necessary condition for sampling is
	$$
	D_p^-(X) \geq \tfrac{1}{\pi } \big| \{ x \in \bR : x^2 \in \Lambda \}
	\big| \, ,
	$$
	where $|I|$ denotes the Lebesgue measure of a set $I\subseteq \bR $.
	
	Finally, the  formulation of the necessary densities  is identical to  the main
	result  in \cite[Thm.~6.2, 6.3]{cpam}.  However, the assumptions on
	the parametrizing function $p$ are radically different. In~\cite{cpam}
	we assumed $p$ to be smooth, at least $C^2$, and then transformed the
	Sturm-Liouville operator into an equivalent Schr\"odinger operator and
	applied scattering theory. In the multivariate version in~\cite{apde}
	we even assumed $p\in C^\infty $ and used the regularity theory of
	elliptic partial differential equations to derive information about
	the reproducing kernel. For the case of a singular function $p$ all
	these tools fail dramatically; our main tool is the simple form of the
	operator $A_p$ and the ensuing more explicit knowledge about the
	reproducing kernel.

	\subsection{Organization}
	
	The paper is organized as follows: in Section \ref{sec:prel} we recall
	a few concepts and results from Sturm-Liouville theory. In Section
	\ref{sec:PW} we 
	introduce rigorously the Paley-Wiener space $PW_{\Lambda}(A_{p})$. 
	We will determine  a suitable  fundamental system of solutions and  the spectral
	measure $\mu$ for $A_p$.  In Section
	\ref{sec:ex} we compute  the reproducing kernel   of
	$\pwo $ for a piecewise constant parametrizing function $p$. In
	Section \ref{sec:dens}, 
	we derive necessary density conditions
	for sampling and interpolation in $PW_{\Lambda}(A_{p})$ with  piecewise
	constant $p$. 

        \vspace{3mm}

	\textbf{Acknowledgment}: We would like to thank our colleagues Gerald Teschl
	and Jussi Behrndt for discussion and advice about the spectral theory
	of Sturm-Liouville operators.

	\section{Spectral Theory of Sturm-Liouville Operators}
	
	\label{sec:prel} This section, adapted from \cite{cpam}, is a brief
	review of the relevant spectral theory of singular Sturm-Liouville
	differential expressions on $\bR$ in divergence form. Given a positive
	function $p$ on $\bR $, we define the
	differential expression $\tau _p$ by $\tau_{p}f=-\left(pf'\right)'$.
	\begin{prop} \label{correctdef}
		If $p$ is a piecewise constant function as defined in \eqref{pwp}, 
		then $\tau _p$ with domain
		\begin{align*}
			\mathcal{D}(A_{p}) & =\{f\in L^{2}(\bR):f,pf'\in AC_{loc}(\bR)\textrm{ and }\tau_{p}f\in L^{2}(\bR)\},\\
			A_{p}f & =\tau_{p}f,f\in\mathcal{D}(A_{p})
		\end{align*}
		defines the self-adjoint operator $A_{p}$ on $L^{2}\left(\bR\right)$.
		The spectrum of $A_{p}$ is purely absolutely continuous and consists
		of the positive semiaxis: $\sigma\left(A_{p}\right)=\sigma_{ac}\left(A_{p}\right)=[0,\infty)$.
	\end{prop}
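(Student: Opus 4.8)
The plan is to read $\tau_p f = -(pf')'$ as a singular Sturm--Liouville expression of the form $\tfrac1w\big({-(pf')'}+qf\big)$ with weight $w\equiv 1$ and potential $q\equiv 0$. Because $p$ is positive and piecewise constant, $p$ and $1/p$ are locally bounded, so $1/p,\,q,\,w\in L^1_{\mathrm{loc}}(\bR)$ and $\tau_p$ is \emph{regular} on every bounded subinterval of $\bR$; the only singular endpoints are $\pm\infty$. The finitely many jump points $t_1<\dots<t_n$ are ordinary interior points, and the hypotheses $f,\,pf'\in AC_{loc}(\bR)$ in the definition of $\mathcal D(A_p)$ are precisely the transmission conditions that $f$ and the flux $pf'$ be continuous across each $t_j$. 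Granting this, $\mathcal D(A_p)$ is exactly the maximal domain $\{f\in L^2(\bR):f,\,pf'\in AC_{loc}(\bR),\ \tau_p f\in L^2(\bR)\}$ of $\tau_p$, and by the standard theory (see, e.g., \cite[Sec.~14]{weidmann2003lineare2}) the maximal operator on this domain is self-adjoint precisely when $\tau_p$ is in the limit-point case at both $+\infty$ and $-\infty$.

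First I would verify the limit-point classification. Near $-\infty$ we have $p\equiv p_0$, so for $z\in\bC\setminus\bR$ the equation $-(pf')'=zf$ becomes $-p_0 f''=zf$, with solutions $e^{\pm\mu x}$ where $\mu=\sqrt{-z/p_0}$ is taken with $\operatorname{Re}\mu>0$ (possible since $-z/p_0\notin[0,\infty)$); then $e^{\mu x}\in L^2(-\infty,t_1)$ while $e^{-\mu x}\notin L^2(-\infty,t_1)$, so there is exactly one $L^2$ solution near $-\infty$, i.e.\ limit point. The same argument with $p_n$ applies at $+\infty$. Hence the deficiency indices are $(0,0)$ and $A_p$ is self-adjoint. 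Non-negativity is then immediate from Green's formula: for $f\in\mathcal D(A_p)$, $\int_{-R}^R(\tau_p f)\bar f = -pf'\bar f\big|_{-R}^R+\int_{-R}^R p|f'|^2$, with no interior contributions at the $t_j$ since $pf'$ is continuous there, and the boundary term vanishes as $R\to\infty$ in the limit-point case; equivalently, $A_p$ is the self-adjoint operator associated with the closed non-negative quadratic form $f\mapsto\int_{\bR}p|f'|^2$. Thus $\sigma(A_p)\subseteq[0,\infty)$.

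There remains the identification $\sigma(A_p)=\sigma_{ac}(A_p)=[0,\infty)$, which I expect to be the main obstacle. The essential spectrum is handled by decoupling $L^2(\bR)$ at a point to the right of $t_n$ and at one to the left of $t_1$: this alters the resolvent only by a finite-rank operator, so $\sigma_{ess}(A_p)$ equals the union of the essential spectra of the two outer half-line operators (which are the \emph{free} operators $-p_0\,d^2/dx^2$ and $-p_n\,d^2/dx^2$, with $\sigma_{ess}=[0,\infty)$) and of the operator on the bounded middle region (discrete spectrum); hence $[0,\infty)\subseteq\sigma(A_p)\subseteq[0,\infty)$. For the spectral \emph{type} I would use Gilbert--Pearson subordinacy theory (or, equivalently, the half-line Titchmarsh--Weyl $m$-functions): for every $\lambda>0$, all solutions of $-(pf')'=\lambda f$ are, on the outer half-lines, genuine trigonometric functions $a\cos(\sqrt{\lambda/p_\infty}\,x)+b\sin(\sqrt{\lambda/p_\infty}\,x)$ — hence bounded and with $L^2$-mean growing linearly, so none is subordinate at $\pm\infty$ — while across the bounded pieces the solution propagates by $2\times2$ transfer matrices of determinant $1$ and bounded norm. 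Subordinacy theory then yields $(0,\infty)\subseteq\sigma_{ac}(A_p)$ and no singular spectrum in $(0,\infty)$; the $m$-function version shows directly that $m_\pm(z)$ extend analytically across $(0,\infty)$ with strictly positive imaginary part. Finally $\lambda=0$ is not an eigenvalue — from $-(pf')'=0$ one gets $pf'\equiv\mathrm{const}$, so $f$ is affine on the two unbounded pieces and cannot lie in $L^2(\bR)$ unless $f\equiv0$ — and a single point supports no continuous spectral mass; combining everything, $\sigma(A_p)=\sigma_{ac}(A_p)=[0,\infty)$ with $\sigma_{sc}(A_p)=\sigma_{pp}(A_p)=\varnothing$. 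The delicate points to get right are the uniform invertibility of the composed transfer matrices (needed for the subordinacy conclusion) and the verification that the stated domain really is the maximal domain in the presence of the jumps; the rest is routine Sturm--Liouville theory.
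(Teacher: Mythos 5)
Your argument is correct, but it is worth noting that the paper itself does not prove this proposition: it simply declares the self-adjointness ``standard'' with references to Weidmann, Teschl and Dunford--Schwartz, and attributes the absolute continuity of the spectrum to the criterion of Schmied--Sims--Teschl for Sturm--Liouville operators whose coefficients are eventually constant. What you have written is essentially a self-contained reconstruction of what those references supply. Your limit-point analysis at $\pm\infty$ (one non-$L^2$ exponential solution on each unbounded interval where $p$ is constant), the identification of $\mathcal D(A_p)$ with the maximal domain via the transmission conditions $f,pf'\in AC_{loc}$ at the jumps, the non-negativity via the form $f\mapsto\int_\bR p|f'|^2$, the Weyl decoupling argument giving $\sigma_{ess}(A_p)=[0,\infty)$, and the Gilbert--Pearson subordinacy argument (no subordinate solution at either end for $\lambda>0$, since all solutions are trigonometric on the outer half-lines and the finitely many transfer matrices are invertible) together with the exclusion of $\lambda=0$ as an eigenvalue, are all sound and yield $\sigma(A_p)=\sigma_{ac}(A_p)=[0,\infty)$. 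The subordinacy route is slightly different in flavour from the paper's later machinery, which obtains the same information by exhibiting the Weyl solutions $\Phi^{\pm}(z,\cdot)$ explicitly and showing that the $m$-function data (the density $\kappa$) is analytic and bounded below on $(0,\infty)$ --- your remark about $m_\pm$ extending analytically across $(0,\infty)$ with positive imaginary part is exactly the content of their bound $\kappa(\sqrt\lambda)\ge 1/(q_0q_n)$. Your approach buys a proof that does not depend on first constructing the fundamental system; the paper's buys the explicit spectral measure, which it needs anyway for the reproducing kernel. The two ``delicate points'' you flag are indeed the right ones, and both are unproblematic here: the composed transfer matrices are finitely many fixed invertible $2\times2$ matrices (determinant $q_{k}/q_{k-1}\neq 0$), and the jump points are regular interior points of the expression since $1/p\in L^1_{loc}$.
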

	
	\begin{proof}
		The assertions concerning self-adjointness are standard, see, e.g.,
		\cite{teschl2014mathematical,weidmann2003lineare2,Dunford1988linear}.
		The result on the purely absolutely continuous spectrum can be inferred
		form \cite{Schmied_2008}. %
	\end{proof}
	We also have the following spectral representation of $A_{p}$ (see
	\cite{cpam}).%

	\begin{prop}
		\label{weidspecthm} If $\Phi(\lambda,x)=\big(\phi_{1}(\lambda,x),\phi_{2}(\lambda,x)\big)$
		is a fundamental system of solutions of $(\tau-\lambda)\phi=0$ that
		depends continuously on $\lambda$, then there exists a $2\times2$
		matrix measure $\rho$, such that the operator 
		\begin{equation}
			\mathcal{F}_{A_{p}}:L^{2}(\bR)\to L^{2}(\bR,d\rho);\quad\mathcal{F}_{A_{p}}f(\lambda)=\int_{\bR}f(x)\,\overline{\Phi(\lambda,x)}dx\label{eq:c21}
		\end{equation}
		is unitary and diagonalizes $A$, i.e.,
		$$\mathcal{F}_{A_{p}}A\mathcal{F}_{A_{p}}^{-1}G(\lambda)=\lambda G(\lambda)$$
		for all $G\in L^{2}(\bR,d\rho)$. The inverse has the form 
		\[
		{\mathcal{F}_{A_{p}}^{-1}}G(x)=\int_{\bR}G(\lambda)\cdot{\Phi(\lambda,x)}\,d\rho(\lambda)\,
		.
		\]
		for $G\in L^{2}(\bR,d\rho)$.
		
		If $g$ is a bounded Borel function on $\bR$, then for every $f\in L^{2}(\bR)$,
		\begin{equation}
			g(A)f(x)=\int_{\bR_{0}^{+}}g(\lambda)\mathcal{F}_{A_{p}}f(\lambda)\cdot{\Phi(\lambda,x)}\,d\rho(\lambda)\,.\label{eq:specrep}
		\end{equation}
		All integrals $\int_{\bR}$ have to be understood as $\lim_{\substack{a\to-\infty\\
				b\to\infty
			}
		}\int_{a}^{b}$ with convergence in $L^{2}$. 
	\end{prop}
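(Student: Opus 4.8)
The assertion is the Weyl--Titchmarsh--Kodaira eigenfunction expansion for the self-adjoint operator $A_p$. Since $A_p$ is self-adjoint and, by Proposition~\ref{correctdef}, $\tau_p$ is in the limit-point case at both $\pm\infty$, the plan is to run the classical construction, keeping track of the divergence form $\tau_p f=-(pf')'$. First I would fix a base point $x_0\in\bR$ away from the jumps of $p$ and normalize the fundamental system by $\phi_1(\lambda,x_0)=1$, $(p\phi_1')(\lambda,x_0)=0$, $\phi_2(\lambda,x_0)=0$, $(p\phi_2')(\lambda,x_0)=1$. With the modified Wronskian $W(f,g)=f\,(pg')-(pf')\,g$ one has $W(\phi_1,\phi_2)\equiv 1$, and $\lambda\mapsto\Phi(\lambda,\cdot)$ is entire because for a piecewise constant $p$ each $\phi_i(\lambda,\cdot)$ is an explicit combination of $e^{\pm i\sqrt{\lambda/p_j}\,x}$ on $I_j$, glued across the $t_j$ by matching $f$ and $pf'$. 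For $z\in\bC\setminus\bR$ the limit-point property yields, up to a scalar, a unique solution $\psi_+(z,\cdot)$ square-integrable near $+\infty$ and a unique $\psi_-(z,\cdot)$ square-integrable near $-\infty$; writing $\psi_\pm(z,x)=\phi_1(z,x)+m_\pm(z)\,\phi_2(z,x)$ defines the Weyl functions $m_\pm$, which are Herglotz on the two half-planes.

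Next I would assemble the $2\times2$ Weyl--Titchmarsh matrix $M(z)$ from $m_+$ and $m_-$ in the usual way (its entries being explicit rational functions of $m_\pm$, e.g.\ $M_{22}(z)=1/(m_-(z)-m_+(z))$ up to sign), verify that $M$ is a matrix-valued Herglotz function, and invoke the matrix Herglotz representation
\[
M(z)=C_1+C_2 z+\int_{\bR}\Big(\frac{1}{\lambda-z}-\frac{\lambda}{1+\lambda^2}\Big)\,d\rho(\lambda),
\]
which produces a positive $2\times2$ matrix measure $\rho$; by Proposition~\ref{correctdef} it is absolutely continuous and supported on $[0,\infty)$, with a locally integrable matrix density. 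The resolvent $(A_p-z)^{-1}$ has the explicit integral kernel assembled from $\psi_\pm$ divided by $W(\psi_-,\psi_+)$. Inserting this kernel into Stone's formula and using the Stieltjes--Perron inversion to recover $\rho$ from the boundary values $\tfrac1\pi\,\mathrm{Im}\,M(\lambda+i0)$ gives the Parseval identity $\|f\|_2^2=\int_{\bR}\mathcal{F}_{A_p}f(\lambda)\cdot\overline{\mathcal{F}_{A_p}f(\lambda)}\,d\rho(\lambda)$, first for $f$ in a dense class (e.g.\ compactly supported $f\in L^2(\bR)$, for which $\mathcal{F}_{A_p}f(\lambda)=\int f(x)\overline{\Phi(\lambda,x)}\,dx$ converges absolutely). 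Extending by continuity gives the isometry $\mathcal{F}_{A_p}\colon L^2(\bR)\to L^2(\bR,d\rho)$, with the truncations $\int_a^b$ converging in the norm of $L^2(d\rho)$.

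It remains to prove surjectivity, to diagonalize, and to deduce the functional calculus. For surjectivity I would show that $G\mapsto\int_{\bR}G(\lambda)\cdot\Phi(\lambda,x)\,d\rho(\lambda)$ (a priori on compactly supported $G$) is a two-sided inverse of $\mathcal{F}_{A_p}$, equivalently that $\mathcal{F}_{A_p}^*\mathcal{F}_{A_p}=\mathrm{Id}$ on $L^2(\bR)$ and $\mathcal{F}_{A_p}\mathcal{F}_{A_p}^*=\mathrm{Id}$ on $L^2(d\rho)$, the latter exploiting that $L^2(\bR,d\rho)$ is generated, in the sense of the functional calculus, by the two columns of $\Phi$. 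The diagonalization is a twofold integration by parts: for $f\in\mathcal{D}(A_p)$ with compact support, $\int(\tau_p f)(x)\overline{\Phi(\lambda,x)}\,dx=\int f(x)\overline{(\tau_p\Phi)(\lambda,x)}\,dx=\lambda\int f(x)\overline{\Phi(\lambda,x)}\,dx$, since the boundary terms $f\,\overline{(p\Phi')}-(pf')\,\overline{\Phi}$ vanish by compact support (and for general $f\in\mathcal{D}(A_p)$ they vanish at $\pm\infty$ by the limit-point condition); hence $\mathcal{F}_{A_p}A_p\mathcal{F}_{A_p}^{-1}$ is multiplication by $\lambda$. Finally, since the spectral resolution of the self-adjoint $A_p$ is unique, $g(A_p)=\mathcal{F}_{A_p}^{-1}M_g\mathcal{F}_{A_p}$ for every bounded Borel $g$, and writing out both transforms yields \eqref{eq:specrep}, with all integrals understood as $L^2$-limits of truncations.

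The main obstacle is the Parseval identity together with surjectivity, i.e.\ that $\mathcal{F}_{A_p}$ is genuinely unitary: this is the core of Weyl--Titchmarsh theory and requires the Stone-formula computation with the explicit Green's function, uniform control of $\mathrm{Im}\,M(\lambda+i\epsilon)$ as $\epsilon\downarrow0$ with the attendant interchange of limits, and a separate cyclicity/density argument for onto-ness. Everything else---the explicit solutions (thanks to piecewise constant $p$), the integration by parts, and the passage to the functional calculus---is routine, so in the paper I would simply cite the standard references \cite{weidmann2003lineare2,Dunford1988linear} and \cite{cpam}.
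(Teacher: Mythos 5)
The paper gives no proof of this proposition at all: it is quoted from \cite{cpam} and the surrounding remark simply points to the standard references \cite{weidmann2003lineare2,Dunford1988linear,Weidmann1987spectral,teschl2014mathematical} for the Weyl--Titchmarsh--Kodaira construction. Your outline is precisely that classical construction (limit-point Weyl solutions, Herglotz $M$-matrix, Stone's formula, Parseval, diagonalization by integration by parts), and you correctly conclude by citing the same sources, so this matches the paper's approach; the only cosmetic gap is that you fix a normalized fundamental system at a base point, whereas the statement allows an arbitrary continuously $\lambda$-dependent system (needed later for the Jost-type $\Phi^{\pm}$), which is handled by a routine $\lambda$-dependent change of basis transforming $\rho$ accordingly.
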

	
	$\mathcal{F}_{A_{p}}$ is called the \emph{spectral transform} (or
	also a spectral representation of $A$).
	\begin{rem*}
		It is always possible to choose a fundamental system of solutions
		$\Phi(\lambda,\cdot)$ that depends continuously (even analytically)
		on $\lambda$~\cite{weidmann2003lineare2}. The spectral measure
		can (and will be) then be constructed explicitly from the knowledge
		of such a set of fundamental solutions $(A-z)\phi=0$, see \cite{Dunford1988linear,teschl2014mathematical,Weidmann1987spectral,weidmann2003lineare2}
		.
	\end{rem*}

	In particular, the \textit{spectral projection} $\chi_{\Lambda}(A_{p}):L^{2}(\bR)\to L^{2}(\bR)$
	corresponding to a Borel set $\Lambda\subseteq\bR_{0}^{+}$ is given
	by 
	
	\begin{align}
		\chi_{\Lambda}(A_{p})f=\int_{\Lambda}\mathcal{F}_{A_{p}}f(\lambda)\cdot\Phi(\lambda,\cdot)\,d\mu(\lambda)=\mathcal{F}_{A_{p}}^{-1}(\chi_{\Lambda}\mathcal{F}_{A_{p}}f)\label{specproj}
	\end{align}
	for all $f\in L^{2}(\bR)$. With this projection operator we define
	the main object for our approach to variable bandwidth. 
	\begin{defn}
		Let $A_{p}$ as defined in Proposition~\ref{correctdef}
		{} and $\Lambda\subset\bR_{0}^{+}$ be of finite measure. The Paley-Wiener
		space, denoted $PW_{\Lambda}(A_{p})$,
		is the range of $\chi_{\Lambda}(A_{p})$, i.e., 
		\[
		PW_{\Lambda}(A_{p})=\chi_{\Lambda}(A_{p})(L^{2}(\bR)).
		\]
		Equivalently, a function $f\in L^{2}(\bR)$ belongs to $PW_{\Lambda}(A_{p})$
		if $f=\chi_{\Lambda}(A_{p})f$.
	\end{defn}
	
	Some elementary properties of functions of variable bandwidth can
	be found in \cite[Sec.~3]{cpam}. As argued in \cite{cpam}, this
	definition is one possibility to give meaning to the notion of
	variable bandwidth. 
	
	\section{Paley Wiener Space with Piecewise Constant Parametrization }
	
	\label{sec:PW} We now present the fundamental results on $PW_{\Lambda}(A_{p})$
	for $p$ piecewise constant as in \eqref{pwp}.
	We will derive a formula for the spectral measure and 
	discuss a strategy how to compute the reproducing kernel.%

	\subsection{Fundamental solutions of $(\tau_{p}-\lambda)f=0$ \label{subsec:Fundamental-solutions-of}}
	
	In order to derive the spectral representation $\mathcal{F}_{A_{p}}$,
	we choose a fundamental system $\Phi(z,\cdot)=(\Phi^{+}(z,\cdot),\Phi^{-}(z,\cdot))$
	of $(\tau_{p}-z)f=0$ of (classical) solutions that depends analytically on $z\in\bC\setminus\bR$
	and such that one solution lies right and the other lies left in $L^2(\bR)$ (a function $f$ \emph{lies right} in $L^{2}(\bR)$
		if $f\in L^{2}(c,\infty)$ for some $c\in\bR$. Similarly, $f$ \emph{lies
			left} in $L^2(\bR)$ if $f\in L^{2}(-\infty, c)$ for some $c \in \bR$). These additional integrability conditions are in preparation
	for the derivation of the spectral measure.
	
	The strategy is as follows. Since $p$ is constant 
	$p(x)=p_{k}$ on the interval  $I_{k}= [t_k,t_{k+1})$, the eigenvalue equation
	$A_pf=zf$ becomes $(-p_{k}D^{2}-z)f=0$ on $ I_{k}$, and therefore
	every eigenfunction of $A_p$ restricted to $I_k$ possesses an
	elementary solution by exponentials. To obtain an eigenfunction for
	$A_p$ on all of $\bR $, we have to glue together these local
	solutions, which is a construction similar to that of
	splines. In the following we use once and for all the notation
	\begin{equation}
		\label{eq:cc2}
		q_k=\frac{1}{\sqrt{p_k}} \, ,
	\end{equation}
	as this is the precise local frequency of the eigenfunctions. 
	We also use the principal square root of $z$ defined as follows: if $z=re^{i\theta} \in \bC\setminus(-\infty,0]$ with $r>0$ and $-\pi < \theta < \pi,$ then $\sqrt{z} = \sqrt{r}e^{i\theta/2}$. In particular, $\textrm{Im}z$ and $\textrm{Im}\sqrt{z}$ have the same sign.
	\begin{thm}
		\label{lemsol} Let $p$ be a piecewise constant function. Then there
		exist analytic functions $a_{l}^{+},b_{l}^{+},a_{j}^{-},b_{j}^{-}$
		from $\bC\setminus(-\infty,0]$ to $\bC$ such that, 
		for every $z\in\bC\setminus(-\infty,0]$,  the functions
		\begin{align}
			\Phi^{+}(z,x) & =\begin{cases}
				a_{l}^{+}(z)e^{iq_{l}\sqrt{z}x}+b_{l}^{+}(z)e^{-iq_{l}\sqrt{z}x}, & x\in I_{l},0\leq l\leq n-1,\\
				e^{iq_{n}\sqrt{z}x}, & x\in I_{n},
			\end{cases}\label{pw1}\\
			\Phi^{-}(z,x) & =\begin{cases}
				e^{-iq_{0}\sqrt{z}x}, & x\in I_{0},\\
				a_{j}^{-}(z)e^{iq_{j}\sqrt{z}x}+b_{j}^{-}(z)e^{-iq_{j}\sqrt{z}x}, & x\in I_{j},1\leq j\leq n
			\end{cases}\label{pw2}
		\end{align}
		are solutions of $(\tau_{p}-z)f=0$ that are analytic in $z$ on $\bC\setminus(-\infty,0]$.

		If $\textrm{Im}z>0$, then $\Phi(z,\cdot) = \big(\Phi ^+(z,\cdot ) , \phi
		^-(z,\cdot )\big)$ is a fundamental system
		of $(\tau_{p}-z)f=0$ such that $\Phi^{+}(z,\cdot)$ lies right and
		$\Phi^{-}(z,\cdot)$ lies left in $L^{2}(\bR)$, and 
		
		Similarly, if $\textrm{Im}z<0$, then $\overline{\Phi(\overline{z},\cdot)}$
		is a fundamental system of $(\tau_{p}-z)f=0$ such that $\overline{\Phi^{+}(\overline{z},\cdot)}$
		lies right and $\overline{\Phi^{-}(\overline{z},\cdot)}$ lies left
		in $L^{2}(\bR)$. 
		
		Setting
		\begin{equation}
			\label{eq:cc3}
			L_{k}(z)=\dfrac{1}{2}\begin{bmatrix}\left(1+\frac{q_{k}}{q_{k-1}}\right)e^{it_{k}(q_{k-1}-q_{k})\sqrt{z}} & \left(1-\frac{q_{k}}{q_{k-1}}\right)e^{-it_{k}(q_{k-1}+q_{k})\sqrt{z}}\\
				\left(1-\frac{q_{k}}{q_{k-1}}\right)e^{it_{k}(q_{k-1}+q_{k})\sqrt{z}} & \left(1+\frac{q_{k}}{q_{k-1}}\right)e^{-it_{k}(q_{k-1}-q_{k})\sqrt{z}}
			\end{bmatrix}\, ,
		\end{equation}
		the coefficients $a^{\pm}, b^{\pm}$ satisfy the recursion relations 
		\[
		\begin{bmatrix}a_{k}^{\pm}(z)\\
			b_{k}^{\pm}(z)
		\end{bmatrix}=L_{k}(z)\begin{bmatrix}a_{k-1}^{\pm}(z)\\
			b_{k-1}^{\pm}(z)
		\end{bmatrix}\, .
		\]
		As a consequence the coefficients $a^{\pm}, b^{\pm }$ are almost
		periodic trigonometric polynomials of the variable $\sqrt{z}$. 
	\end{thm}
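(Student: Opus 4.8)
The plan is to construct the two solutions $\Phi^{\pm}$ piece by piece, starting from an explicit elementary solution on the unbounded interval where the formula is prescribed, and then propagating across each jump $t_k$ using the transmission conditions built into the domain $\mathcal{D}(A_p)$. Concretely, for $\Phi^{+}$ I start on $I_n$ with $\Phi^{+}(z,x)=e^{iq_n\sqrt z x}$; on each interval $I_k$ the equation $(-p_kD^2-z)f=0$ forces $f(x)=a_k e^{iq_k\sqrt z x}+b_k e^{-iq_k\sqrt z x}$ because $q_k^2 z/p_k \cdot p_k = z$, i.e. the characteristic roots are exactly $\pm i q_k\sqrt z$. The gluing conditions are that $f$ and $pf'$ be continuous at each $t_k$ (this is what "$f,pf'\in AC_{loc}$" amounts to for a piecewise constant $p$, since $\tau_p f\in L^2_{loc}$ already prevents a jump in $pf'$). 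Writing these two scalar equations at $x=t_k$ in the unknowns $(a_{k-1},b_{k-1})$ versus $(a_k,b_k)$ and solving the $2\times2$ linear system yields exactly the transfer matrix $L_k(z)$ of \eqref{eq:cc3}; this is a short but slightly tedious computation using $p_{k-1}=q_{k-1}^{-2}$, $p_k=q_k^{-2}$ and collecting the exponential factors $e^{\pm i t_k(q_{k-1}\mp q_k)\sqrt z}$. For $\Phi^{-}$ one does the mirror-image construction, starting on $I_0$ with $e^{-iq_0\sqrt z x}$ and moving to the right, with the same matrices $L_k(z)$ governing the recursion $(a_{k-1}^-,b_{k-1}^-)\mapsto(a_k^-,b_k^-)$.

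Next I record analyticity: each entry of $L_k(z)$ is a product of an affine function of the $q$'s (hence constant in $z$) with an exponential $e^{c\sqrt z}$ for a real constant $c$, and $\sqrt z$ is analytic on $\bC\setminus(-\infty,0]$ by the choice of principal branch; finite products and sums of such functions are analytic there, so $a_l^{\pm},b_l^{\pm}$ are analytic on $\bC\setminus(-\infty,0]$. Since each $L_k(z)$ is (after factoring) a fixed $2\times 2$ matrix whose entries are exponentials $e^{i\alpha\sqrt z}$ with $\alpha\in\bR$, the finite product $L_n(z)\cdots L_1(z)$ applied to the fixed initial vector produces, in every coordinate, a finite $\bR$-linear combination $\sum_m c_m e^{i\gamma_m\sqrt z}$ with real $c_m$ and real frequencies $\gamma_m$ — precisely an almost periodic trigonometric polynomial in the variable $\sqrt z$, which gives the last sentence of the theorem. (The reality of the coefficients follows because every entry of $L_k$ has the form (real number)$\times e^{i(\text{real})\sqrt z}$, and products/sums of such preserve this shape.)

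Finally I verify the integrability side conditions. For $\mathrm{Im}\,z>0$ we have $\mathrm{Im}\sqrt z>0$ by the stated property of the principal root, so on the rightmost interval $I_n=[t_n,\infty)$ the function $e^{iq_n\sqrt z x}$ decays exponentially as $x\to+\infty$, hence $\Phi^{+}(z,\cdot)$ lies right in $L^2(\bR)$; symmetrically $e^{-iq_0\sqrt z x}$ decays as $x\to-\infty$, so $\Phi^{-}(z,\cdot)$ lies left. On the bounded intervals $I_1,\dots,I_{n-1}$ (resp. $I_0$) the solutions are bounded, so no integrability issue arises there. To see that $\Phi=(\Phi^{+},\Phi^{-})$ is a genuine \emph{fundamental} system it suffices to check that the Wronskian $W(\Phi^{+},\Phi^{-})=p(\Phi^+(\Phi^-)'-(\Phi^+)'\Phi^-)$ — which is constant in $x$ for solutions of $\tau_p f=zf$ — is nonzero; one can evaluate it on, say, $I_0$ where $\Phi^{-}(z,x)=e^{-iq_0\sqrt z x}$ and $\Phi^{+}$ is a generic exponential combination, and nonvanishing follows because $\Phi^{+}$ and $\Phi^{-}$ have distinct asymptotic behavior (one decays at $+\infty$, the other at $-\infty$), so they cannot be proportional for $\mathrm{Im}\,z>0$. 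The case $\mathrm{Im}\,z<0$ is obtained by the standard conjugation symmetry $\tau_p\overline{f}=\bar z\,\overline{f}$, i.e. $\overline{\Phi(\bar z,\cdot)}$ solves $(\tau_p-z)f=0$, and the lies-right/lies-left properties transfer verbatim. The only mildly delicate point is the bookkeeping of the exponential phases in deriving $L_k(z)$ and then arguing that the resulting coordinate functions are almost periodic polynomials with \emph{real} coefficients; everything else is routine ODE matching and elementary complex analysis.
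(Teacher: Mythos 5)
Your proposal is correct and follows essentially the same route as the paper: local exponential solutions on each $I_k$ with characteristic roots $\pm iq_k\sqrt z$, matching of $f$ and $pf'$ at the jumps to obtain the transfer matrices $L_k(z)$ (and their inverses for the backward recursion defining $\Phi^+$), analyticity and almost periodicity read off from the entries of these matrices, and the lies-right/lies-left properties from the sign of $\mathrm{Im}\sqrt z$. Your explicit Wronskian/non-proportionality check that $\Phi$ is a fundamental system is a small addition the paper leaves implicit, but it does not change the argument.
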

	
	\begin{proof}
		Fix $z\in\bC\setminus(-\infty,0]$. On the interval $I_k = [t_k,
		t_{k+1})$ the parametrizing function $p$ is constants and we need to
		solve
		$$
		(A_p - z)f(x) = (- p_k \frac{d^2f}{dx^2} - zf)(x) = 0 \qquad \for x\in
		I_k \, .
		$$
		It is clear that  these  local solutions $\varphi_{k}(z,\cdot)$
		take the form 
		\begin{align*}
			\varphi_{k}(z,x)=a_{k}(z)e^{iq_{k}\sqrt{z}x}+b_{k}(z)e^{-iq_{k}\sqrt{z}x},\quad x\in I_{k}
		\end{align*}
		for some choice of constants $a_{k}(z),b_{k}(z)\in\bC,0\leq k\leq n.$
		We need to choose these constants in such a way that the function 
		\[
		\varphi(z,x)= \sum _{k=0}^n \varphi_{k}(z,x)\chi _{I_k}(x) 
		\]
		satisfies
		$\varphi(z,\cdot),p\varphi'(z,\cdot)\in AC_{loc}(\bR)$. These
		conditions 
		imply that for each $k$, 
		\begin{align*}
			\begin{bmatrix}\varphi_{k-1}(z,t_{k})\\
				\frac{1}{q_{k-1}^{2}}\varphi_{k-1}'(z,t_{k})
			\end{bmatrix}=\begin{bmatrix}\varphi(z,t_{k})\\
				p(t_{k})\varphi'(z,t_{k})
			\end{bmatrix} & =\lim_{x\downarrow t_{k}}\begin{bmatrix}\varphi(z,x)\\
				p(x)\varphi'(z,x)
			\end{bmatrix}=\begin{bmatrix}\varphi_{k}(z,t_{k})\\
				\frac{1}{q_{k}^{2}}\varphi_{k}'(z,t_{k})
			\end{bmatrix} \, .
		\end{align*}
		Upon substituting the local solutions, this yields
		\begin{align*}
			\begin{bmatrix}e^{iq_{k-1}\sqrt{z}t_{k}} & e^{iq_{k-1}\sqrt{z}t_{k}}\\
				i\frac{\sqrt{z}}{q_{k-1}}e^{iq_{k-1}\sqrt{z}t_{k}} & -i\frac{\sqrt{z}}{q_{k-1}}e^{iq_{k-1}\sqrt{z}t_{k}}
			\end{bmatrix}\begin{bmatrix}a_{k-1}(z)\\
				b_{k-1}(z)
			\end{bmatrix} & =\begin{bmatrix}e^{iq_{k}\sqrt{z}t_{k}} & e^{iq_{k}\sqrt{z}t_{k}}\\
				i\frac{\sqrt{z}}{q_{k}}e^{iq_{k}\sqrt{z}t_{k}} & -i\frac{\sqrt{z}}{q_{k}}e^{iq_{k}\sqrt{z}t_{k}}
			\end{bmatrix}\begin{bmatrix}a_{k}(z)\\
				b_{k}(z)
			\end{bmatrix}.
		\end{align*}
		With the notation for the matrices $L_k(z)$ from \eqref{eq:cc3},
		$\varphi(z,\cdot)$ is a solution of
		$(\tau_{p}-z)f=0$ 
		if and only if the relations 
		\begin{align*}
			\begin{bmatrix}a_{k}(z)\\
				b_{k}(z)
			\end{bmatrix}=L_{k}(z)\begin{bmatrix}a_{k-1}(z)\\
				b_{k-1}(z)
			\end{bmatrix}
		\end{align*}
		hold for all $k$. In particular, taking $a_{0}^{-}(z)=0$, $b_{0}^{-}(z)=1$
		and recursively generating the remaining coefficients via 
		\begin{align}
			\begin{bmatrix}a_{j}^{-}(z)\\
				b_{j}^{-}(z)
			\end{bmatrix}=L_{j}(z)\begin{bmatrix}a_{j-1}^{-}(z)\\
				b_{j-1}^{-}(z)
			\end{bmatrix},\quad1\leq j\leq n,\label{aklk}
		\end{align}
		we generate the solution $\Phi^{-}(z,\cdot)$ of $(\tau_{p}-z)f=0,z\in\bC\setminus(-\infty,0]$
		as claimed in \eqref{pw2}. Analogously, we generate $\Phi^{+}$ in \eqref{pw1}
		using 
		\begin{align}
			\qquad R_{k}(z)=L_{k}^{-1}(z)=\dfrac{1}{2}\begin{bmatrix}\left(1+\frac{q_{k-1}}{q_{k}}\right)e^{-it_{k}(q_{k-1}-q_{k})\sqrt{z}} & \left(1-\frac{q_{k-1}}{q_{k}}\right)e^{-it_{k}(q_{k-1}+q_{k})\sqrt{z}}\\
				\left(1-\frac{q_{k-1}}{q_{k}}\right)e^{it_{k}(q_{k-1}+q_{k})\sqrt{z}} & \left(1+\frac{q_{k-1}}{q_{k}}\right)e^{it_{k}(q_{k-1}-q_{k})\sqrt{z}}
			\end{bmatrix},\label{Rj}
		\end{align}
		the initial values
		\begin{equation}
			\label{eq:cc4}
			a_{n}^{+}(z)=1,b_{n}^{+}(z)=0
		\end{equation}
		and the recursions
		\begin{align}
			\begin{bmatrix}a_{l}^{+}(z)\\
				b_{l}^{+}(z)
			\end{bmatrix}=R_{l+1}(z)\begin{bmatrix}a_{l+1}^{+}(z)\\
				b_{l+1}^{+}(z)
			\end{bmatrix},\quad0\leq l\leq n-1 \, .\label{albl}
		\end{align}
		The analyticity of $a_{k}^{\pm}$, $b_{k}^{\pm},$ and consequently
		of $\Phi^{+}$ and $\Phi^{-}$ in $z$ follows from the analyticity
		of $z\mapsto e^{\pm iq_{k}\sqrt{z}x}$ in $\bC\setminus(-\infty,0]$
		for all $x\in\bR$. Finally, if $\textrm{Im}z>0$ then $\textrm{Im}\sqrt{z}>0$.
		Moreover, 
		\begin{align*}
			|e^{\pm iq\sqrt{z}x}|=e^{\mp q\textrm{Im}\sqrt{z}x},\quad x\in\bR,q>0,z\in\bC\setminus(-\infty,0].
		\end{align*}
		Thus, $\Phi^{-}(z,x)=e^{-iq_{0}\sqrt{z}x}\in L^{2}(I_{0})$ and
		$\Phi^{+}(z,x)=e^{iq_{n}\sqrt{z}x}\in L^{2}(I_{n})$. This means that  $\Phi^{-}(z,\cdot)$ lies left and $\Phi^{+}(z,\cdot)$ lies
		right in $L^{2}(\bR)$. The case $\textrm{Im}z<0$ for $\overline{\Phi^{+}(\overline{z},\cdot)}$
		and $\overline{\Phi^{-}(\overline{z},\cdot)}$ is proved analogously. 
	\end{proof}
	We call the analytic functions $a_{k}^{\pm}$ and $b_{k}^{\pm}$ the
	\emph{connection coefficients} of $\Phi^{+}$ and $\Phi^{-}$. From
	the iterative computations in \eqref{aklk} and \eqref{albl} we see
	that 
	\begin{align}
		\begin{bmatrix}a_{l}^{+}(z)\\
			b_{l}^{+}(z)
		\end{bmatrix} & =R_{l+1}(z)R_{l+2}(z)\cdots R_{n}(z)\begin{bmatrix}1\\
			0
		\end{bmatrix},\quad0\leq l\leq n-1\label{reccoefrj}\\
		\begin{bmatrix}a_{j}^{-}(z)\\
			b_{j}^{-}(z)
		\end{bmatrix} & =L_{j}(z)L_{j-1}(z)\cdots L_{1}(z)\begin{bmatrix}0\\
			1
		\end{bmatrix},\quad1\leq j\leq n.\label{reccoeflk}
	\end{align}
	These coefficients are used to continuously glue together the local
	solutions of $(\tau_{p}-z)f=0$ on each $I_{k}$ to form two global
	solutions $\Phi^{+}(z,\cdot)$, $\Phi^{-}(z,\cdot)$ that lie right
	and lie left in $L^{2}(\bR)$, respectively. Moreover, the above formulas
	show that the connection coefficients are almost periodic polynomials
	in $\sqrt{z}$ with real coefficients. 
	\begin{rem}
		\label{surmk} Hidden in the recursion relations \eqref{reccoefrj} is a group
		structure. To see this, define for $1\leq k\leq n$ the quantities 
		\[
		\gamma_{k}=(\tfrac{q_{k-1}}{q_{k}})^{1/2},\quad\eta_{k}=t_{k}(q_{k-1}-q_{k})\quad\textrm{ and }\quad\theta_{k}=t_{k}(q_{k-1}+q_{k}).
		\]
		We can then express $L_{k}(z)$ as 
		\begin{align*}
			L_{k}(z) & =\gamma_{k}^{-1}\begin{bmatrix}\frac{\gamma_{k}+\gamma_{k}^{-1}}{2}e^{i\eta_{k}\sqrt{z}} & \frac{\gamma_{k}-\gamma_{k}^{-1}}{2}e^{-i\theta_{k}\sqrt{z}}\\
				\frac{\gamma_{k}-\gamma_{k}^{-1}}{2}e^{i\theta_{k}\sqrt{z}} & \frac{\gamma_{k}+\gamma_{k}^{-1}}{2}e^{-i\eta_{k}\sqrt{z}}
			\end{bmatrix},\quad\det L_{k}(z)=\gamma_{k}^{-2}
		\end{align*}
		and \eqref{Rj} as 
		\begin{align*}
			R_{k}(z) & =\gamma_{k}\begin{bmatrix}\frac{\gamma_{k}+\gamma_{k}^{-1}}{2}e^{-i\eta_{k}\sqrt{z}} & -\frac{\gamma_{k}-\gamma_{k}^{-1}}{2}e^{-i\theta_{k}\sqrt{z}}\\
				-\frac{\gamma_{k}-\gamma_{k}^{-1}}{2}e^{i\theta_{k}\sqrt{z}} & \frac{\gamma_{k}+\gamma_{k}^{-1}}{2}e^{i\eta_{k}\sqrt{z}}
			\end{bmatrix},\quad\det R_{k}(z)=\gamma_{k}^{2}.
		\end{align*}
		For $z\in(0,\infty)$, the matrices $\gamma_{k}L_{k}(z)$ and $\gamma_{k}^{-1}R_{k}(z)$
		are elements of the Lie group 
		\begin{align*}
			SU(1,1)=\left\{ \begin{bmatrix}\overline{a} & b\\
				\overline{b} & a 
			\end{bmatrix}:a,b\in\bC\textrm{ and }|a|^{2}-|b|^{2}=1\right\} .
		\end{align*}
		Hence,
		\begin{align*}
			\gamma_{j}L_{j}(z)\cdot\ldots\cdot\gamma_{1}L_{1}(z)=\left(\prod_{k=1}^{j}\gamma_{k}\right)\begin{bmatrix}\overline{b_{j}^{-}(z)} & a_{j}^{-}(z)\\
				\overline{a_{j}^{-}(z)} & b_{j}^{-}(z)
			\end{bmatrix}
		\end{align*}
		and therefore 
		\begin{align}
			L_{j}(z)\cdot\ldots\cdot L_{1}(z)=\begin{bmatrix}\overline{b_{j}^{-}(z)} & a_{j}^{-}(z)\\
				\overline{a_{j}^{-}(z)} & b_{j}^{-}(z)
			\end{bmatrix},\quad1\leq j\leq n.\label{ljprod}
		\end{align}
		By the same token, it can be shown that 
		\begin{align}
			R_{l+1}(z)\cdot\ldots\cdot R_{n}(z)=\begin{bmatrix}a_{l}^{+}(z) & \overline{b_{l}^{+}(z)}\\
				b_{l}^{+}(z) & \overline{a_{l}^{+}(z)}
			\end{bmatrix},\quad0\leq l\leq n-1.\label{rlprod}
		\end{align}
	\end{rem}
	
	\begin{lem}
		Let $\Phi=(\Phi^{+},\Phi^{-})$ be defined as in Theorem \ref{lemsol}.
		Then $\Phi$ is uniformly bounded on $\bR^{+}\times\bR$. \label{unifbdd} 
	\end{lem}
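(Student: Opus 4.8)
The plan is to restrict attention to $z \in \bR^{+}$ from the very beginning: on the positive half-line $\sqrt{z}$ is a positive real number, so all the exponentials $e^{\pm i q_{k}\sqrt{z}x}$ appearing in \eqref{pw1} and \eqref{pw2} have modulus one, and the estimate reduces to controlling finitely many scalar coefficients.

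First I would record the trivial pointwise bound. Fix $z \in \bR^{+}$. Since $|e^{\pm i q\sqrt{z}x}| = 1$ for every $q > 0$ and every $x \in \bR$, formulas \eqref{pw1}--\eqref{pw2} immediately give
\begin{align*}
|\Phi^{+}(z,x)| \le |a_{l}^{+}(z)| + |b_{l}^{+}(z)| \quad (x \in I_{l},\ 0\le l\le n-1), \qquad |\Phi^{+}(z,x)| = 1 \quad (x \in I_{n}),
\end{align*}
and symmetrically $|\Phi^{-}(z,x)| \le |a_{j}^{-}(z)| + |b_{j}^{-}(z)|$ for $x \in I_{j}$, $1\le j\le n$, while $|\Phi^{-}(z,x)| = 1$ on $I_{0}$. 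Since $p$ has only finitely many jumps, it therefore suffices to bound the connection coefficients $a_{k}^{\pm}(z), b_{k}^{\pm}(z)$ uniformly over $z \in \bR^{+}$.

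Next I would bound the transfer matrices entrywise. Inspecting \eqref{eq:cc3} and \eqref{Rj}, each entry of $L_{k}(z)$ and of $R_{k}(z)$ is a constant depending only on the ratio $q_{k}/q_{k-1}$ times a factor $e^{i(\cdots)\sqrt{z}}$ whose exponent is real once $z \in \bR^{+}$, hence unimodular. Thus $\|L_{k}(z)\|, \|R_{k}(z)\| \le C_{k}$ in the operator norm, with $C_{k}$ depending only on $q_{k-1}, q_{k}$ and not on $z$. Feeding this into the closed formulas \eqref{reccoeflk} and \eqref{reccoefrj},
\begin{align*}
\left\| \begin{bmatrix} a_{j}^{-}(z)\\ b_{j}^{-}(z) \end{bmatrix} \right\| \le \prod_{k=1}^{j} \|L_{k}(z)\| \le \prod_{k=1}^{n} C_{k}, \qquad \left\| \begin{bmatrix} a_{l}^{+}(z)\\ b_{l}^{+}(z) \end{bmatrix} \right\| \le \prod_{k=l+1}^{n} \|R_{k}(z)\| \le \prod_{k=1}^{n} C_{k},
\end{align*}
uniformly in $z \in \bR^{+}$. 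Combining the two steps yields $|\Phi^{\pm}(z,x)| \le \sqrt{2}\,\max\!\big(1, \prod_{k=1}^{n} C_{k}\big)$ for all $(z,x) \in \bR^{+}\times\bR$, which is exactly the assertion.

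I do not expect a genuine obstacle; the whole argument rests on two simple observations, and the main point to be careful about is the first of them: the restriction to real positive $z$ is essential, because for non-real $z$ the factors $e^{\pm i q_{k}\sqrt{z}x}$ grow exponentially as $|x|\to\infty$. The second observation is that $p$ has a fixed finite number of jumps, so the transfer-matrix product has a length independent of $z$. One could alternatively route everything through the $SU(1,1)$ structure of Remark~\ref{surmk}, but since $SU(1,1)$ is itself an unbounded group this would still require the elementary entrywise estimates above, so the direct route is the cleanest.
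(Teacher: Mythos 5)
Your proposal is correct and follows essentially the same route as the paper's proof: for $z\in\bR^{+}$ the exponentials are unimodular, so the bound reduces to estimating the finite products of transfer matrices from \eqref{reccoefrj}--\eqref{reccoeflk}, which the paper does via the subordinate $\ell^{1}$-norm and its submultiplicativity, exactly as you do with a generic operator norm. No gaps.
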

	
	\begin{proof}
		By definition of $\Phi$, we see that for $0\leq k\leq n$ and $(\lambda,x)\in\bR^{+}\times I_{k}$,
		\begin{align*}
			|\Phi^{\pm}(\lambda,x)|\leq|a_{k}^{\pm}(\lambda)|+|b_{k}^{\pm}(\lambda)|=\norm{\begin{bmatrix}a_{k}^{+}(\lambda) & b_{k}^{+}(\lambda)\end{bmatrix}^{T}}_{1}.
		\end{align*}
		Let the matrix norm subordinate to $\lVert\cdot\rVert_{1}$ be denoted
		by the same symbol. From \eqref{reccoefrj} %
		we have that for $(\lambda,x)\in\bR^{+}\times\bR$, 
		\begin{align*}
			|\Phi^{+}(\lambda,x)| & \leq\max_{0\leq k\leq n}\norm{\begin{bmatrix}a_{k}^{+}(\lambda) & b_{k}^{+}(\lambda)\end{bmatrix}^{T}}_{1}\leq\max\left\{ 1,\max_{1\leq k\leq n}\norm{R_{k}R_{k-1}\cdots R_{n}}_{1}\right\} \,.
		\end{align*}
		%

		From %
		{} \eqref{Rj}, we obtain for $1\leq k\leq n$ the estimates 
		\begin{align*}
			\norm{R_{k}}_{1} & =\tfrac{1}{2}\left(\left|1+\tfrac{q_{k-1}}{q_{k}}\right|+\left|1-\tfrac{q_{k-1}}{q_{k}}\right|\right)\leq1+\tfrac{q_{k-1}}{q_{k}}
		\end{align*}
		The assertion now follows from the submultiplicativity of $\norm{\cdot}_{1}$
		. The estimate for $\Phi^{-}(\lambda,x)$ follows the same lines.
	\end{proof}
	Next, we derive several fundamental  identities
	for the connection coefficients.  These  expressions will follow from
	properties of a  (modified) Wronskian determinant. 
	\begin{lem}
		\label{wronskthm} Let $n\in\bN$ and $a_{k}^{\pm},b_{k}^{\pm},0\leq
		k\leq n$, 
		be the connection coefficients of $\Phi=(\Phi^{+},\Phi^{-})$. Then
		the following identities hold. 
		
		(i) If $z\in\bC\setminus(-\infty,0]$, then, for all $1\leq k\leq n-1$
		\begin{align}
			\frac{a_{0}^{+}(z)}{q_{0}} & =\frac{1}{q_{k}}\left(a_{k}^{+}(z)b_{k}^{-}(z)-a_{k}^{-}(z)b_{k}^{+}(z)\right)=\frac{b_{n}^{-}(z)}{q_{n}}\label{wron1}
		\end{align}
		
		(ii) If $\lambda\in(0,\infty)$, then for all $0\leq k\leq n$, 
		\begin{align}
			q_{0}(|b_{k}^{-}(\lambda)|^{2}-|a_{k}^{-}(\lambda)|^{2}) & =q_{n}(|a_{k}^{+}(\lambda)|^{2}-|b_{k}^{+}(\lambda)|^{2})=q_{k}\,.\label{id1}
		\end{align}
		Consequently, 
		\begin{equation}
			\frac{|a_{k}^{+}(\lambda)|^{2}}{q_{0}}+\frac{|a_{k}^{-}(\lambda)|^{2}}{q_{n}}=\frac{|b_{k}^{+}(\lambda)|^{2}}{q_{0}}+\frac{|b_{k}^{-}(\lambda)|^{2}}{q_{n}}.\label{hidentity}
		\end{equation}
		
		(iii) If $\lambda\in(0,\infty)$, then  for all $1\leq k\leq n-1$
		\begin{align}
			\frac{b_{0}^{+}(\lambda)}{q_{0}} & =\frac{1}{q_{k}}\left(b_{k}^{+}(\lambda)\overline{b_{k}^{-}(\lambda)}-a_{k}^{+}(\lambda)\overline{a_{k}^{-}(\lambda)}\right)=-\frac{\overline{a_{n}^{-}(\lambda)}}{q_{n}}\,.\label{wron3}
		\end{align}
	\end{lem}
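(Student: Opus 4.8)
The plan is to derive all three sets of identities from the constancy of the $p$-weighted (modified) Wronskian. For a piecewise constant $p$ as in \eqref{pwp} and two solutions $f,g$ of $(\tau_{p}-z)f=0$ with $f,g,pf',pg'\in AC_{loc}(\bR)$, I set
\[
W_{p}(f,g)(x)=p(x)\bigl(f(x)g'(x)-f'(x)g(x)\bigr)=f(x)\,(pg')(x)-g(x)\,(pf')(x).
\]
On each open interval $(t_{k},t_{k+1})$ the derivative of $W_{p}(f,g)$ vanishes, by the product rule together with $(pf')'=-zf$ and $(pg')'=-zg$, so $W_{p}(f,g)$ is locally constant; and the second expression shows it is continuous across every jump point $t_{k}$, since there $f,g,pf',pg'$ are all continuous. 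Hence $W_{p}(f,g)$ is one and the same constant on all of $\bR$. First I would record this, together with the trivial remark that when $\lambda\in(0,\infty)$ the conjugate $\overline{\Phi^{\pm}(\lambda,\cdot)}$ again solves $(\tau_{p}-\lambda)f=0$ and has the same regularity (because $p,\lambda$ are real); this is exactly why parts (ii) and (iii) are restricted to positive real spectral parameters.

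The computation that makes the rest mechanical is the following: on $I_{k}$ one has $p\equiv q_{k}^{-2}$ and every solution is of the form $a\,e^{iq_{k}\sqrt{z}x}+b\,e^{-iq_{k}\sqrt{z}x}$, and a direct expansion gives
\[
W_{p}\bigl(a\,e^{iq_{k}\sqrt{z}x}+b\,e^{-iq_{k}\sqrt{z}x},\;c\,e^{iq_{k}\sqrt{z}x}+d\,e^{-iq_{k}\sqrt{z}x}\bigr)=-\frac{2i\sqrt{z}}{q_{k}}\,(ad-bc).
\]
So for each identity I would evaluate $W_{p}$ of a suitable pair of solutions on $I_{0}$, on a generic $I_{k}$, and on $I_{n}$, using the explicit forms \eqref{pw1}--\eqref{pw2}, and then equate.

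For (i) I take the pair $(\Phi^{+},\Phi^{-})$: on $I_{0}$ the coefficients $(a,b;c,d)$ are $(a_{0}^{+},b_{0}^{+};0,1)$, on $I_{k}$ they are $(a_{k}^{+},b_{k}^{+};a_{k}^{-},b_{k}^{-})$, and on $I_{n}$ they are $(1,0;a_{n}^{-},b_{n}^{-})$; cancelling the common factor $-2i\sqrt{z}$ (nonzero for $z\notin(-\infty,0]$) from the three equal values of $W_{p}$ yields \eqref{wron1}. For (ii), with $\lambda>0$, I apply the same recipe to $(\Phi^{-},\overline{\Phi^{-}})$ and to $(\Phi^{+},\overline{\Phi^{+}})$: on $I_{k}$ the expression $ad-bc$ becomes $|a_{k}^{-}|^{2}-|b_{k}^{-}|^{2}$ and $|a_{k}^{+}|^{2}-|b_{k}^{+}|^{2}$ respectively, while on $I_{0}$ it equals $-1$ and on $I_{n}$ it equals $1$; equating yields $q_{0}(|b_{k}^{-}|^{2}-|a_{k}^{-}|^{2})=q_{n}(|a_{k}^{+}|^{2}-|b_{k}^{+}|^{2})=q_{k}$, which is \eqref{id1}, and dividing both of these by $q_{0}q_{n}$ and comparing gives \eqref{hidentity}. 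For (iii) I take $(\Phi^{+},\overline{\Phi^{-}})$: the coefficients are $(a_{0}^{+},b_{0}^{+};1,0)$ on $I_{0}$, $(a_{k}^{+},b_{k}^{+};\overline{b_{k}^{-}},\overline{a_{k}^{-}})$ on $I_{k}$, and $(1,0;\overline{b_{n}^{-}},\overline{a_{n}^{-}})$ on $I_{n}$; equating the three values of $W_{p}$ and cancelling the common factor gives \eqref{wron3}.

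There is no real obstacle; this is essentially bookkeeping. The one point that genuinely needs care --- and the reason the $p$-weighted Wronskian, rather than the bare $fg'-f'g$, is the correct object --- is continuity at the jump points: $\Phi^{\pm}$ and $p(\Phi^{\pm})'$ are continuous across each $t_{k}$ (being locally absolutely continuous), but $(\Phi^{\pm})'$ itself jumps there, so only $W_{p}$ is globally constant. One should also check that at the two boundary intervals the degenerate coefficient choices $a_{0}^{-}=0,\ b_{0}^{-}=1$ and $a_{n}^{+}=1,\ b_{n}^{+}=0$ are consistent with the interior formulas, so that \eqref{wron1}, \eqref{id1}, and \eqref{wron3} hold on the full stated ranges of $k$.
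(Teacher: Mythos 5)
Your proposal is correct and follows essentially the same route as the paper: both derive all three identities by evaluating the modified Wronskian $W_{p}(u,v)=u(pv')-v(pu')$ of the pairs $(\Phi^{+},\Phi^{-})$, $(\Phi^{\pm},\overline{\Phi^{\pm}})$, and $(\Phi^{+},\overline{\Phi^{-}})$ on $I_{0}$, a generic $I_{k}$, and $I_{n}$, and equating the constant values. Your explicit justification of the constancy across the jump points (via continuity of $f$ and $pf'$) is a welcome elaboration of a fact the paper only cites.
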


	\begin{proof}
		Let
		\begin{align*}
			W_{p}(u,v)=\begin{vmatrix}u & v\\
				pu' & pv'
			\end{vmatrix}=u(pv')-v(pu')
		\end{align*}
		be the modified Wronskian determinant of a pair of solutions $u,v$
		of $(A_p-z)f=0$ and recall the important fact that  the Wronskian of solutions
		of the differential equation is a constant and independent of the
		variable $x$ (see, e.g., \cite{teschl2014mathematical,weidmann2003lineare2}).
		We apply this fact  to the components of $\Phi ^+$ and $\Phi ^-$ on each  interval $I_{k}$.
		
		(i) Let $z\in\bC\setminus(-\infty,0]$. Then for $x\in I_{k},0\leq k\leq n$
		and  
		\begin{align*}
			\Phi^{\pm}(z,x)=a_{k}^{\pm}(z)e^{iq_{k}\sqrt{z}x}+b_{k}^{\pm}(z)e^{-iq_{k}\sqrt{z}x},
		\end{align*}
		we compute directly that 
		\begin{align}
			\lefteqn{W_{p}\left(\Phi^{+}(z,x),\Phi^{-}(z,x)\right)=} \\ &
			=a_{k}^{+}(z)b_{k}^{-}(z)W\left(e^{iq_{k}\sqrt{z}x},e^{-iq_{k}\sqrt{z}x}\right) +b_{k}^{+}(z)a_{k}^{-}(z)W\left(e^{-iq_{k}\sqrt{z}x},e^{iq_{k}\sqrt{z}x}\right)\nonumber \\
			& =-\tfrac{2i\sqrt{z}}{q_{k}}\left(a_{k}^{+}(z)b_{k}^{-}(z)-a_{k}^{-}(z)b_{k}^{+}(z)\right).\label{finint}
		\end{align}
		On the unbounded intervals $I_{0}$ and $I_{n}$, \eqref{finint}
		reduces to 
		\begin{align}
			W_{p}\left(\Phi^{+}(z,x),\Phi^{-}(z,x)\right) & =-\tfrac{2i\sqrt{z}}{q_{0}}a_{0}^{+}(z)\quad(k=0:a_{0}^{-}(z)=0,b_{0}^{-}(z)=1)\label{ioint}\\
			W_{p}\left(\Phi^{+}(z,x),\Phi^{-}(z,x)\right) & =-\tfrac{2i\sqrt{z}}{q_{n}}b_{n}^{-}(z)\quad(k=n:a_{n}^{+}(z)=1,b_{n}^{+}(z)=0).\label{inint}
		\end{align}
		The equality of all three expressions yields \eqref{wron1} for all
		$z\in\bC\setminus(-\infty,0].$
		
		(ii) Given $\lambda\in(0,\infty)$, we can infer from $p$ being real-valued
		that $(\Phi^{-}(\lambda,\cdot),\overline{\Phi^{-}(\lambda,\cdot)})$
		and $(\Phi^{+}(\lambda,\cdot),\overline{\Phi^{+}(\lambda,\cdot)})$
		are also pairs of solutions of $(\tau_{p}-\lambda)f=0$. Identities
		\eqref{id1} are  derived analogously from the Wronskian of the respective
		pairs.

		(iii) Given $\lambda\in(0,\infty)$, identity \eqref{wron3} follows
		from computing the Wronskian of the pair $(\Phi^{+}(\lambda,\cdot),\overline{\Phi^{-}(\lambda,\cdot)})$
		of solutions of $(\tau_{p}-\lambda)f=0$ on  each interval $I_{k}$.
		\qedhere 
	\end{proof}
	\begin{rem*}
		An alternative proof of (ii) is possible by  using the underlying group
		structure explained in Remark \ref{surmk}.
		Let $z=\lambda\in(0,\infty)$. If $k=0$, then $a_{0}^{-}(\lambda)=0$,
		$b_{0}^{-}(\lambda)=1$ and \eqref{id1} trivially holds. Now, suppose
		$1\leq k\leq n$. By taking the determinant of both sides of \eqref{ljprod}
		and with $j=k$, we get 
		\begin{align*}
			|b_{k}^{-}(\lambda)|^{2}-|a_{k}^{-}(\lambda)|^{2}=\gamma_{k}^{-2}\cdot\ldots\cdot\gamma_{1}^{-2}=\tfrac{q_{k}}{q_{0}},
		\end{align*}
		proving \eqref{id1}. The other identity is proved similarly.%
	\end{rem*}
	An immediate consequence of identities \eqref{wron1} and \eqref{hidentity}
	is the following. 
	\begin{cor}
		\label{kappnew} With the notation of Lemma \ref{wronskthm} and for
		$\lambda\in(0,\infty)$ we have %
		\[
		\frac{|b_{0}^{+}(\lambda)|^{2}}{q_{0}^{2}}+\frac{1}{q_{0}q_{n}}=\frac{|a_{0}^{+}(\lambda)|^{2}}{q_{0}^{2}}=\frac{|b_{n}^{-}(\lambda)|^{2}}{q_{n}^{2}}=\frac{1}{q_{0}q_{n}}+\frac{|a_{n}^{-}(\lambda)|^{2}}{q_{n}^{2}}.
		\]
	\end{cor}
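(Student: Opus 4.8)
The plan is to obtain every term in the asserted chain directly from \eqref{wron1} and \eqref{hidentity}, simply by evaluating the latter at the two boundary indices $k=0$ and $k=n$, where the connection coefficients are pinned down by the normalizations in Theorem~\ref{lemsol}. No global argument is needed; it is a substitution in each case.

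First I would use the outer equality in \eqref{wron1}, namely $a_0^+(z)/q_0 = b_n^-(z)/q_n$, valid for every $z\in\bC\setminus(-\infty,0]$; for $n\geq 2$ this is contained in \eqref{wron1}, while for $n=1$ it is exactly the equality of the two expressions \eqref{ioint} and \eqref{inint} for the Wronskian on the unbounded intervals $I_0$ and $I_n$. Specializing to $z=\lambda\in(0,\infty)$, taking moduli and squaring gives
\[
\frac{|a_0^+(\lambda)|^2}{q_0^2}=\frac{|b_n^-(\lambda)|^2}{q_n^2},
\]
which is the middle equality in the statement.

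Next I would insert $k=0$ into \eqref{hidentity}. Since $a_0^-(\lambda)=0$ and $b_0^-(\lambda)=1$ by construction of $\Phi^-$, identity \eqref{hidentity} collapses to $|a_0^+(\lambda)|^2/q_0 = |b_0^+(\lambda)|^2/q_0 + 1/q_n$, and dividing by $q_0>0$ yields the first equality
\[
\frac{|b_0^+(\lambda)|^2}{q_0^2}+\frac{1}{q_0q_n}=\frac{|a_0^+(\lambda)|^2}{q_0^2}.
\]
Symmetrically, inserting $k=n$ into \eqref{hidentity} and using $a_n^+(\lambda)=1$, $b_n^+(\lambda)=0$ gives $1/q_0 + |a_n^-(\lambda)|^2/q_n = |b_n^-(\lambda)|^2/q_n$; dividing by $q_n$ produces the last equality
\[
\frac{|b_n^-(\lambda)|^2}{q_n^2}=\frac{1}{q_0q_n}+\frac{|a_n^-(\lambda)|^2}{q_n^2}.
\]
Chaining the three displays finishes the argument.

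I do not expect any genuine obstacle: the computation is a one-line substitution in each case, and the only point to keep in mind is that \eqref{hidentity} is indeed available at the extreme indices $k=0$ and $k=n$ — which it is, since Lemma~\ref{wronskthm}(ii) asserts it for all $0\leq k\leq n$ — together with the minor bookkeeping for $n=1$ noted above.
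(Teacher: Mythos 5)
Your proposal is correct and is exactly the derivation the paper intends: the middle equality comes from the outer identity in \eqref{wron1} (equivalently \eqref{ioint}=\eqref{inint}), and the two outer equalities come from evaluating \eqref{hidentity} at $k=0$ and $k=n$ with the normalizations $a_0^-=0$, $b_0^-=1$, $a_n^+=1$, $b_n^+=0$. The paper gives no separate proof beyond declaring the corollary an immediate consequence of \eqref{wron1} and \eqref{hidentity}, and your substitutions (including the careful remark about the $n=1$ case) fill in precisely that.
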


	\subsection{The spectral measure\label{subsec:The-spectral-measure}}
	
	We are now ready to discuss some of the spectral properties of $A_{p}$
	and derive a formula for the spectral measure $\mu$ of $A_{p}$.
	We will need the following expressions. An application of \cite[Thm.~7.8]{Weidmann1987spectral}
	to the fundamental system $\Phi$ shows that for $z\in\rho(A_{p})=\bC\setminus\sigma(A_{p}),$
	the resolvent $R_{z}(A_{p})=(A_{p}-z)^{-1}$ can be expressed as integral
	operator 
	\begin{align*}
		R_{z}(A_{p})g(x) & =\int_{\bR}r_{z}(x,y)g(y)\,dy,\quad g\in L^{2}(\bR)
	\end{align*}
	with the\emph{ }resolvent kernel $r_{z}(x,y)$ defined as 
	\begin{align}
		r_{z}(x,y)=\dfrac{1}{W_p(\Phi^{+}(z,\cdot),\Phi^{-}(z,\cdot))}\begin{cases}
			\Phi^{+}(z,x)\Phi^{-}(z,y), & y\leq x,\\
			\Phi^{-}(z,x)\Phi^{+}(z,y), & y>x\label{reskerold}\,.
		\end{cases}
	\end{align}
	Following Weidmann \cite[Sec.~14]{weidmann2003lineare2} %
	we can find $2\times2$ matrices $m^{\pm}$ with entries analytic
	in $\rho(A_{p})$ such that 
	\begin{align}
		r_{z}(x,y)=\begin{cases}
			\overline{\Phi(\overline{z},x)}\cdot m^{+}(z)\Phi(z,y), & y\leq x\\
			\overline{\Phi(\overline{z},x)}\cdot m^{-}(z)\Phi(z,y), & y>x.
		\end{cases}\label{resmat}
	\end{align}
	For any bounded interval $(\gamma,\lambda]$ the spectral measure
	$\mu$ is given by the Weyl-Titchmarsh Formula (cf. \cite[Thm.~XIII.5.18]{Dunford1988linear},
	\cite[Thm.~9.4]{Weidmann1987spectral}, \cite[Thm.~14.5]{weidmann2003lineare2}):
	\begin{align}
		\mu((\gamma,\lambda])=\dfrac{1}{2\pi i}\lim_{\delta\downarrow0}\lim_{\epsilon\downarrow0}\int_{\gamma+\delta}^{\lambda+\delta}\left(m^{+}(t+i\epsilon)-m^{+}(t-i\epsilon)\right)\,dt.\label{wtkform}
	\end{align}
	As $\mu$ is absolutely continuous with respect to the Lebesgue measure
	on $\bR$ this equation can also be written as
	\[
	d\mu\left(\lambda\right)=\dfrac{1}{2\pi
		i}\lim_{\epsilon\downarrow0}\left(m^{+}(\lambda+i\epsilon)-m^{+}(\lambda-i\epsilon)\right)
	\, .
	\]
	The following theorem describes the spectral measure of $A_{p}$. 
	\begin{thm}
		\label{specprojthm} Let $n\in\bN$ and $p$ a step function with
		$n$ jumps. Let $\Phi=(\Phi^{+},\Phi^{-})$ be the fundamental system
		as constructed  in Theorem
		\ref{lemsol} with connection coefficients $a_{k}^{\pm},b_{k}^{\pm}$,
		$0\leq k\leq n$. Define
		\begin{equation}
			\label{eq:cc5}
			\kappa(\sqrt{\lambda})=\frac{|a_{0}^{+}(\lambda)|^{2}}{q_{0}^{2}}=\frac{|b_{n}^{-}(\lambda)|^{2}}{q_{n}^{2}} \geq\frac{1}{q_{0}q_{n}}  \, .
		\end{equation}
		Then the spectral measure of $A_{p}$ is a $2\times2$
		positive matrix measure $\mu$ given by 
		\begin{align}
			d\mu(\lambda)=\dfrac{1}{4\pi\kappa(\sqrt{\lambda})}\begin{bmatrix}\frac{1}{q_{0}} & 0\\
				0 & \frac{1}{q_{n}}
			\end{bmatrix} \dfrac{d\lambda}{\sqrt{\lambda}}
			\,.\label{specmeasdmu}
		\end{align}
		The spectral transform 
		\begin{equation}
			\mathcal{F}_{A_{p}}:L^{2}(\bR)\longrightarrow L^{2}([0,\infty),d\mu),\quad\mathcal{F}_{A_{p}}f(\lambda)=\int_{\bR}f(x)\overline{\Phi(\lambda,x)}\,dx\label{fapnewspec}
		\end{equation}
		yields a spectral representation of $A_{p}$. The inverse $\mathcal{F}_{A_{p}}^{-1}$
		takes the form 
		\begin{align*}
			\mathcal{F}_{A_{p}}^{-1}G(x) & =\int_{0}^{\infty}G(\lambda)\cdot\Phi(\lambda,x)\,d\mu(\lambda)\\
			& =\frac{1}{4\pi}\int_{0}^{\infty}\dfrac{\frac{1}{q_{0}}G_{1}(\lambda)\Phi^{+}(\lambda,x)+\frac{1}{q_{n}}G_{2}(\lambda)\Phi^{-}(\lambda,x)}{\kappa(\sqrt{\lambda})}\,\dfrac{d\lambda}{\sqrt{\lambda}}
		\end{align*}
		for all $G=(G_{1},G_{2})\in L^{2}([0,\infty),d\mu)$. 
	\end{thm}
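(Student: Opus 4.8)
The plan is to compute the spectral matrix measure $\mu$ explicitly from the boundary behaviour of the resolvent kernel \eqref{reskerold} and then to deduce the statements about $\mathcal{F}_{A_p}$ from the general spectral representation in Proposition~\ref{weidspecthm}. Two preliminary remarks: by Corollary~\ref{kappnew} the two expressions for $\kappa(\sqrt\lambda)$ in \eqref{eq:cc5} agree and $\kappa(\sqrt\lambda)\ge 1/(q_0q_n)$, so $\kappa$ is bounded away from zero on $(0,\infty)$ and the claimed density is well defined; and since $\sigma(A_p)=[0,\infty)$ with $\mu$ absolutely continuous (Proposition~\ref{correctdef}), it suffices to identify the density $d\mu/d\lambda$ on $(0,\infty)$, where the analytic fundamental system $\Phi$ of Theorem~\ref{lemsol} is available.

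Starting from \eqref{reskerold}, the denominator is the modified Wronskian $W_p(\Phi^+(z,\cdot),\Phi^-(z,\cdot))$, which by \eqref{wron1} equals $-\tfrac{2i\sqrt z}{q_0}a_0^+(z)=-\tfrac{2i\sqrt z}{q_n}b_n^-(z)$ and in particular is nonzero near $(0,\infty)$. Recasting \eqref{reskerold} in the form \eqref{resmat} and applying the Weyl--Titchmarsh formula \eqref{wtkform} amounts to the following application of Stone's formula: the density of $\mu$ at $\lambda\in(0,\infty)$ is the kernel $\tfrac{1}{\pi}\lim_{\epsilon\downarrow0}\mathrm{Im}\,r_{\lambda+i\epsilon}(x,y)$, and since $A_p$ is a real self-adjoint operator we have $r_{\bar z}(x,y)=\overline{r_z(x,y)}$, so that for $y\le x$ this density equals
\[
\frac{1}{\pi}\,\mathrm{Im}\!\left[\frac{\Phi^+(\lambda,x)\,\Phi^-(\lambda,y)}{-\frac{2i\sqrt\lambda}{q_0}a_0^+(\lambda)}\right]=\frac{q_0}{2\pi\sqrt\lambda}\,\mathrm{Re}\!\left[\frac{\Phi^+(\lambda,x)\,\Phi^-(\lambda,y)}{a_0^+(\lambda)}\right],
\]
where we used $\mathrm{Im}(i\,w)=\mathrm{Re}(w)$.

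The crux is to expand this kernel in the basis $\{\overline{\Phi^j(\lambda,\cdot)}\}_{j=1,2}\otimes\{\Phi^k(\lambda,\cdot)\}_{k=1,2}$ of products of solutions, whose coefficients are exactly the entries of $d\mu/d\lambda$ (the expansion being unique, it suffices to work in the region $y\le x$). Writing $\mathrm{Re}$ as a half-sum, expanding $\Phi^\pm(\lambda,\cdot)$ into the exponential modes $e^{\pm iq_0\sqrt\lambda x}$ on $I_0$ and $e^{\pm iq_n\sqrt\lambda x}$ on $I_n$, and matching coefficients mode by mode, the identities of Lemma~\ref{wronskthm} (especially \eqref{wron1}, giving $\tfrac{a_0^+}{q_0}=\tfrac{b_n^-}{q_n}$, together with \eqref{id1} and \eqref{wron3}, giving $\tfrac{b_0^+}{q_0}=-\tfrac{\overline{a_n^-}}{q_n}$) and Corollary~\ref{kappnew} (giving $\tfrac{|a_0^+|^2}{q_0^2}=\tfrac{|b_n^-|^2}{q_n^2}=\kappa(\sqrt\lambda)$) make all cross terms cancel and collapse the kernel to
\[
\frac{1}{4\pi\kappa(\sqrt\lambda)\sqrt\lambda}\Big[\tfrac{1}{q_0}\,\overline{\Phi^+(\lambda,x)}\,\Phi^+(\lambda,y)+\tfrac{1}{q_n}\,\overline{\Phi^-(\lambda,x)}\,\Phi^-(\lambda,y)\Big].
\]
A comparison with $\overline{\Phi(\lambda,x)}^{T}\,\tfrac{d\mu}{d\lambda}(\lambda)\,\Phi(\lambda,y)$ yields the diagonal density $\tfrac{d\mu}{d\lambda}(\lambda)=\tfrac{1}{4\pi\kappa(\sqrt\lambda)\sqrt\lambda}\,\mathrm{diag}(1/q_0,1/q_n)$, which is \eqref{specmeasdmu}.

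Finally, with $\mu$ identified, Proposition~\ref{weidspecthm} applied to the analytic fundamental system $\Phi$ and this measure $\mu$ immediately gives that $\mathcal{F}_{A_p}$ in \eqref{fapnewspec} is a spectral representation of $A_p$ with inverse $\mathcal{F}_{A_p}^{-1}G(x)=\int_0^\infty G(\lambda)\cdot\Phi(\lambda,x)\,d\mu(\lambda)$; substituting the block-diagonal form of $d\mu$ and writing $G(\lambda)\cdot\Phi(\lambda,x)=G_1(\lambda)\Phi^+(\lambda,x)+G_2(\lambda)\Phi^-(\lambda,x)$ yields the displayed componentwise formula for $\mathcal{F}_{A_p}^{-1}$. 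I expect the main obstacle to be the third step: carrying out the mode-by-mode bookkeeping correctly and recognizing that the Wronskian identities of Lemma~\ref{wronskthm} and Corollary~\ref{kappnew} are precisely what force the a priori full $2\times2$ density into the clean diagonal shape \eqref{specmeasdmu}. The remaining steps are either routine or a direct appeal to Proposition~\ref{weidspecthm}.
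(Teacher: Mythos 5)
Your proposal is correct and follows essentially the same route as the paper: both extract the density of $\mu$ from the boundary values of the resolvent kernel \eqref{reskerold} on the unbounded intervals $I_0$ and $I_n$, and both invoke the Wronskian identities \eqref{wron1}, \eqref{wron3} and Corollary~\ref{kappnew} to kill the off-diagonal terms and produce the diagonal density \eqref{specmeasdmu}, with the remaining assertions delegated to Proposition~\ref{weidspecthm}. The only organizational difference is that you take $\tfrac{1}{\pi}\operatorname{Im} r_{\lambda+i0}$ directly (Stone's formula) and match coefficients against the products $\overline{\Phi_j(\lambda,x)}\Phi_k(\lambda,y)$, whereas the paper first computes the matrices $m^{+}(z)$ and $m^{+}(\overline z)$ by a change of basis on $I_n$ and $I_0$ and then applies the Weyl--Titchmarsh formula \eqref{wtkform}; the underlying computation is the same.
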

	
	\begin{proof}
		The result is a direct consequence of Proposition~\ref{weidspecthm}.
		Since  $\Phi$ is  given explicitly in Theorem \ref{lemsol}, 
		we can proceed one step
		further to compute $\mu$.
		
		We follow the lines of \cite{weidmann2003lineare2}. Assume that $\textrm{Im}z>0$.
		By Theorem \ref{lemsol}, $\Phi(z,\cdot)$ is a fundamental system
		of $(\tau_{p}-z)f=0$, so that $\Phi^{+}(z,\cdot)$ lies right and
		$\Phi^{-}(z,\cdot)$ lies left in $L^{2}(\bR)$, respectively. By
		\eqref{ioint} and \eqref{reskerold}, 
		\begin{align}
			r_{z}(x,y) & =\dfrac{iq_{0}}{2a_{0}^{+}(z)\sqrt{z}}\begin{cases}
				\Phi^{+}(z,x)\Phi^{-}(z,y), & y\leq x,\\
				\Phi^{-}(z,x)\Phi^{+}(z,y), & y>x.
			\end{cases}\label{rep2}
		\end{align}
		On the other hand, since $\textrm{Im}\overline{z}<0$, $\overline{\Phi(z,\cdot)}$
		is a fundamental system of $(\tau_{p}-\overline{z})f=0$ where $\overline{\Phi^{+}(z,\cdot)}$
		lies right and $\overline{\Phi^{-}(z,\cdot)}$ lies left in $L^{2}(\bR)$,
		respectively. Therefore %
		\begin{align}
			r_{\overline{z}}(x,y) & =\dfrac{-iq_{n}}{2\overline{b_{n}^{-}(z)\sqrt{z}}}\begin{cases}
				\overline{\Phi^{+}(z,x)}\overline{\Phi^{-}(z,y)}, & y\leq x\\
				\overline{\Phi^{-}(z,x)}\overline{\Phi^{+}(z,y)}, & y>x.
			\end{cases}\label{conjzconj}
		\end{align}
		
		To apply the Weyl-Titchmarsh formula we need to compute the $2\times2$
		matrices $m^{\pm}$ in \eqref{resmat}. This amounts to expressing
		$\Phi^{\pm}(z,x)$ by $\overline{\Phi(\overline{z},x)}$.
		
		Since $m^{\pm}(z)$ in \eqref{resmat} does not depend on $x,y$ , we may
		choose $x,y$  in the unbounded
		intervals. This choice makes the  calculations
		simpler. 
		{} 

		For $x\in I_{n}$ we get 
		\begin{align*}
			\overline{\Phi^{+}(\overline{z},x)} & =e^{-iq_{n}\sqrt{z}x},\quad\overline{\Phi^{-}(\overline{z},x)}=\overline{a_{n}^{-}(\overline{z})}e^{-iq_{n}\sqrt{\overline{z}}x}+\overline{b_{n}^{-}(\overline{z})}e^{iq_{n}\sqrt{\overline{z}}x}
		\end{align*}
		and consequently, 
		\begin{align}
			\Phi^{+}(z,x) & =e^{iq_{n}\sqrt{z}x}\nonumber \\
			& =\dfrac{1}{\overline{b_{n}^{-}(\overline{z})}}\overline{\Phi^{-}(\overline{z},x)}-\dfrac{\overline{a_{n}^{-}(\overline{z})}}{\overline{b_{n}^{-}(\overline{z})}}\overline{\Phi^{+}(\overline{z},x)},\quad x\in I_{n}.\label{eq2}
		\end{align}
		Assuming $y\leq x$ and substituting%
		{} \eqref{eq2} to \eqref{rep2} for Im$z>0$ yields 
		\begin{align}
			r_{z}(x,y) & =\dfrac{iq_{0}}{2a_{0}^{+}(z)\overline{b_{n}^{-}(\overline{z})}\sqrt{z}}\left\{ \overline{\Phi^{-}(\overline{z},x)}-\overline{a_{n}^{-}(\overline{z})}\ \overline{\Phi^{+}(\overline{z},x)}\right\} \Phi^{-}(z,y)\,;\label{rzpos1}
		\end{align}
		If we compare this to \eqref{resmat} we obtain 
		\[
		m^{+}(z)=\dfrac{iq_{0}}{2a_{0}^{+}(z)\overline{b_{n}^{-}(\overline{z})}\sqrt{z}}\begin{bmatrix}0 & -\overline{a_{n}^{-}(\overline{z})}\\
			0 & 1
		\end{bmatrix}\,.
		\]
		The analogous calculation for $x, y \in I_{0},y\leq x,$
		yields
		\[
		m^{+}(\overline{z})=\dfrac{-iq_{n}}{2a_{0}^{+}(\overline{z})\overline{b_{n}^{-}(z)}\sqrt{\overline{z}}}\begin{bmatrix}1 & -b_{0}^{+}(\overline{z})\\
			0 & 0
		\end{bmatrix}.
		\]

		Hence, for $\textrm{Im}z>0$ we have 
		\begin{equation}
			m^{+}(z)-m^{+}(\overline{z})=\begin{bmatrix}\dfrac{iq_{n}}{2a_{0}^{+}(\overline{z})\overline{b_{n}^{-}(z)}\sqrt{\overline{z}}} & \dfrac{-iq_{0}\overline{a_{n}^{-}(\overline{z})}}{2a_{0}^{+}(z)\overline{b_{n}^{-}(\overline{z})}\sqrt{z}}-\dfrac{iq_{n}b_{0}^{+}(\overline{z})}{2a_{0}^{+}(\overline{z})\overline{b_{n}^{-}(z)}\sqrt{\overline{z}}}\\
				0 & \dfrac{iq_{0}}{2a_{0}^{+}(z)\overline{b_{n}^{-}(\overline{z})}\sqrt{z}}
			\end{bmatrix}.\label{specmat}
		\end{equation}
		As $\mu$ is absolutely continuous with respect to the Lebesgue measure
		on $[0,\infty)$, by \eqref{wtkform} the matrix $\mathcal{M}$ of
		densities of $\mu$ has entries 
		\begin{align*}
			\mathcal{M}_{jl}(\lambda) & =\lim_{\epsilon\downarrow0}\dfrac{1}{2\pi i}\left(m_{jk}^{+}(\lambda+i\epsilon)-m_{jk}^{+}(\lambda-i\epsilon)\right).
		\end{align*}
		Applying \eqref{wron1}, \eqref{wron3} and Corollary \ref{kappnew}
		to \eqref{specmat} with $z=\lambda+i\epsilon$, $\epsilon\downarrow0$
		gives 
		\begin{align*}
			\mathcal{M}_{11}(\lambda) & =\dfrac{q_{n}}{4\pi\sqrt{\lambda}a_{0}^{+}(\lambda)\overline{b_{n}^{-}(\lambda)}}=\dfrac{q_{0}}{4\pi\sqrt{\lambda}|a_{0}^{+}(\lambda)|^{2}}=\frac{1}{4\pi q_{0}\kappa\left(\sqrt{\lambda}\right)\sqrt{\lambda}},\\
			\mathcal{M}_{22}(\lambda) & =\frac{q_{0}}{q_{n}}\mathcal{M}_{11}(\lambda),\\
			\mathcal{M}_{12}(\lambda) & =-\dfrac{q_{0}q_{n}\left(\tfrac{1}{q_{n}}\overline{a_{n}^{-}(\lambda)}+\tfrac{1}{q_{0}}b_{0}^{+}(\lambda)\right)}{4\pi a_{0}^{+}(\lambda)\overline{b_{n}^{-}(\lambda)}\sqrt{\lambda}}=0,\\
			\mathcal{M}_{21}(\lambda) & =0.
		\end{align*}
		By definition of $\kappa$ and by Corollary \ref{kappnew}, $\kappa(\sqrt{\lambda})\geq\tfrac{1}{q_{0}q_{n}}>0$
		for all $\lambda\in(0,\infty)$ and 
		\[
		d\mu(\lambda)=\mathcal{M}(\lambda)\,d\lambda=\dfrac{1}{4\pi\kappa(\sqrt{\lambda})}\begin{bmatrix}\frac{1}{q_{0}} & 0\\
			0 & \frac{1}{q_{n}}
		\end{bmatrix}\dfrac{d\lambda}{\sqrt{\lambda}}.
		\]
		The formulas for the spectral transform and its inverse are stated  in Theorem \ref{weidspecthm}. 
	\end{proof}
	\begin{rem}
		\label{bddrmk}The substitution $\lambda=u^{2}$ yields the following
		expressions for the spectral matrix and the inverse spectral Fourier
		transform: 
		\begin{align}
			d\mu(u^{2}) & =\dfrac{1}{2\pi\kappa(u)}\begin{bmatrix}\frac{1}{q_{0}} & 0\\
				0 & \frac{1}{q_{n}}
			\end{bmatrix}\,du,\label{specmeasdmu2}\\
			\mathcal{F}_{A_{p}}^{-1}G(x) & =\frac{1}{2\pi}\int_{0}^{\infty}\dfrac{\frac{1}{q_{0}}G_{1}(u^{2})\Phi^{+}(u^{2},x)+\frac{1}{q_{n}}G_{2}(u^{2})\Phi^{-}(u^{2},x)}{\kappa(u)}\,du.\nonumber 
		\end{align}
		In addition, since $a^+$ and $b^-$ are almost periodic trigonometric
		polynomials, 
		there exist $r\in\bN$, which  increases with the number of jumps of $p$, and constants $c_{0},\ldots,c_{r},\lambda_{1},\ldots,\lambda_{r}\in\bR$
		such that 
		\begin{align*}
			\kappa(u)=\frac{|a_0^+(u^2)|}{q_0^2} = c_{0}+\sum_{j=1}^{r}c_{j}\cos(\lambda_{j}u),\quad u\in(0,\infty).
		\end{align*}
	\end{rem}

	\subsection{The reproducing kernel of $PW_{\Lambda}(A_{p})$\label{subsec:The-reproducing-kernel}}
	Next we investigate the reproducing kernel of the Paley-Wiener space
	$PW_{\Lambda}(A_{p}) $. With the spectral measure of $A_p$ in place, the general
	properties of $PW_{\Lambda}(A_{p})  $ follow exactly as for the classical Paley-Wiener
	space. 
	Recall that by definition, $PW_{\Lambda}(A_{p})=\chi_{\Lambda}(A_{p})(L^{2}(\bR))$. 

	\begin{thm}
		\label{thmunifbdd} Let $\Lambda\subset\bR_{0}^{+}$ be a Borel set
		of finite measure and $p$ a piecewise constant function. Define $\Lambda^{1/2}=\{\lambda\in\bR_{0}^{+}:\lambda^{2}\in\Lambda\}$
		and 
		\begin{equation} \label{eq:cc6}
			\vartheta(u,x,y)  =\frac{1}{q_{0}}\overline{\Phi^{+}(u^{2},x)}\Phi^{+}(u^{2},y)+\frac{1}{q_{n}}\overline{\Phi^{-}(u^{2},x)}\Phi^{-}(u^{2},y).
		\end{equation}
		Then $PW_{\Lambda}(A_{p})$ is a reproducing kernel Hilbert space
		with kernel 
		\begin{align}
			k_{\Lambda}(x,y)=\dfrac{1}{2\pi}\int_{\Lambda^{1/2}}\dfrac{\vartheta(u,x,y)}{\kappa(u)}\,du,\quad x,y\in\bR.\label{kref}
		\end{align}
	\end{thm}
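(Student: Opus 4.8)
The plan is to derive the reproducing kernel directly from the spectral representation in Theorem~\ref{specprojthm}, exactly as one does for the classical Paley--Wiener space. First I would recall that $PW_{\Lambda}(A_p) = \chi_\Lambda(A_p)(L^2(\bR))$ and that, by \eqref{specproj} together with the explicit form of $d\mu$ in \eqref{specmeasdmu}, every $f \in PW_\Lambda(A_p)$ can be written as
\begin{align*}
 f(x) = \chi_\Lambda(A_p)f(x) = \int_\Lambda \mathcal{F}_{A_p}f(\lambda)\cdot\Phi(\lambda,x)\,d\mu(\lambda),\quad x\in\bR .
\end{align*}
The first step is to establish that point evaluation $f \mapsto f(x)$ is bounded on $PW_\Lambda(A_p)$, so that the Riesz representation theorem produces the reproducing kernel. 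This follows from Cauchy--Schwarz in $L^2([0,\infty),d\mu)$ once we know $\Phi(\cdot,x) \in L^2(\Lambda,d\mu)$ for each $x$; by Lemma~\ref{unifbdd} the functions $\Phi^\pm$ are uniformly bounded on $\bR^+\times\bR$, and $\Lambda$ has finite measure, while $d\mu$ restricted to $\Lambda$ is a finite matrix measure (its density is $\tfrac{1}{4\pi\kappa(\sqrt\lambda)}\operatorname{diag}(q_0^{-1},q_n^{-1})\lambda^{-1/2}$, integrable near $0$ thanks to the factor $\lambda^{-1/2}$ and the lower bound $\kappa(\sqrt\lambda)\ge (q_0q_n)^{-1}>0$). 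Hence $f(x) = \langle \mathcal{F}_{A_p}f, \chi_\Lambda\Phi(\cdot,x)\rangle_{L^2(d\mu)}$ is bounded in $\|\mathcal{F}_{A_p}f\|_{L^2(d\mu)} = \|f\|_2$.

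The second step identifies the kernel. Since $\mathcal{F}_{A_p}$ is unitary onto $L^2([0,\infty),d\mu)$ and $\mathcal{F}_{A_p}(PW_\Lambda(A_p)) = \chi_\Lambda L^2(d\mu)$, the reproducing property $f(x) = \langle f, k_x\rangle_{L^2(\bR)}$ transfers to $\mathcal{F}_{A_p}f(\lambda) \mapsto (\mathcal{F}_{A_p}f)(x) = \langle \mathcal{F}_{A_p}f, \mathcal{F}_{A_p}k_x\rangle_{L^2(d\mu)}$, which together with the displayed formula for $f(x)$ forces $\mathcal{F}_{A_p}k_x = \chi_\Lambda\Phi(\cdot,x)$, i.e. $k_x = \mathcal{F}_{A_p}^{-1}(\chi_\Lambda\Phi(\cdot,x))$. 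Plugging this into the inverse transform formula from Theorem~\ref{specprojthm} gives
\begin{align*}
 k_\Lambda(x,y) = k_x(y) = \int_\Lambda \overline{\Phi(\lambda,x)}\cdot\Phi(\lambda,y)\,d\mu(\lambda)
 = \frac{1}{4\pi}\int_\Lambda \frac{\tfrac1{q_0}\overline{\Phi^+(\lambda,x)}\Phi^+(\lambda,y) + \tfrac1{q_n}\overline{\Phi^-(\lambda,x)}\Phi^-(\lambda,y)}{\kappa(\sqrt\lambda)}\,\frac{d\lambda}{\sqrt\lambda}.
\end{align*}
Here one must be slightly careful with the order of conjugation: because $\mathcal{F}_{A_p}$ involves $\overline{\Phi(\lambda,x)}$, the kernel naturally comes out as $\int_\Lambda \overline{\Phi(\lambda,x)}\cdot\Phi(\lambda,y)\,d\mu(\lambda)$, which is Hermitian in the sense $\overline{k_\Lambda(x,y)} = k_\Lambda(y,x)$ as required; I would check this matches the $\vartheta$ defined in \eqref{eq:cc6} with the roles of $x,y$ as stated.

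The final step is the change of variables $\lambda = u^2$, $d\lambda = 2u\,du$, $\sqrt\lambda = u$, which converts $\int_\Lambda (\cdots)\,\tfrac{d\lambda}{\sqrt\lambda}$ into $2\int_{\Lambda^{1/2}}(\cdots)\,du$ over the set $\Lambda^{1/2} = \{u \ge 0 : u^2 \in \Lambda\}$; combined with the $\tfrac1{4\pi}$ prefactor this yields exactly $\tfrac1{2\pi}\int_{\Lambda^{1/2}}\tfrac{\vartheta(u,x,y)}{\kappa(u)}\,du$, using $\vartheta(u,x,y) = \tfrac1{q_0}\overline{\Phi^+(u^2,x)}\Phi^+(u^2,y) + \tfrac1{q_n}\overline{\Phi^-(u^2,x)}\Phi^-(u^2,y)$ and $\kappa(u) = |a_0^+(u^2)|^2/q_0^2$ from \eqref{eq:cc5}. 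I do not anticipate a serious obstacle here; the only genuine point requiring care is the integrability/boundedness argument in the first step (justifying that $k_x$ lies in $PW_\Lambda(A_p)$ and that the integral defining $k_\Lambda$ converges absolutely), which is where Lemma~\ref{unifbdd}, the finiteness of $|\Lambda|$, the positive lower bound on $\kappa$, and the integrability of $\lambda^{-1/2}$ at the origin all enter. Everything else is bookkeeping with the unitary $\mathcal{F}_{A_p}$ and the explicit spectral measure.
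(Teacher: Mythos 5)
Your proposal is correct and follows essentially the same route as the paper: uniform boundedness of $\Phi^\pm$ (Lemma~\ref{unifbdd}), finiteness of $|\Lambda^{1/2}|$, and the lower bound $\kappa\geq (q_0q_n)^{-1}$ give $\Phi(\cdot,x)\in L^2(\Lambda,d\mu)$ uniformly in $x$, then Cauchy--Schwarz and the unitarity of $\mathcal{F}_{A_p}$ yield boundedness of point evaluation, and the kernel formula follows from the explicit spectral measure after the substitution $\lambda=u^2$. The only cosmetic difference is that the paper identifies the kernel by Fubini rather than via $k_x=\mathcal{F}_{A_p}^{-1}(\chi_\Lambda\overline{\Phi(\cdot,x)})$, and your intermediate line should carry the conjugate $\overline{\Phi(\cdot,x)}$ (as your final displayed formula correctly does).
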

	
	\begin{proof}
		We first claim that $\{\|\Phi(\cdot,x)\|_{L^{2}(\Lambda,d\mu)}\}_{x\in\bR}$
		is bounded. Indeed, Theorem \ref{specprojthm} implies that for each
		$x\in\bR$, 
		\begin{align*}
			\|\Phi(\cdot,x)\|_{L^{2}(\Lambda,d\mu)}^{2} & =\frac{1}{2\pi}\int_{\Lambda^{1/2}}\frac{\frac{1}{q_{0}}|\Phi^{+}(u^{2},x)|^{2}+\frac{1}{q_{n}}|\Phi^{-}(u^{2},x)|^{2}}{\kappa(u)}\,du\\
			& \leq\frac{1}{2\pi}\essupp_{u\in\Lambda^{1/2}}|\Phi^{\pm}(u^{2},x)|^{2}\int_{\Lambda^{1/2}}\frac{q_{0}+q_{n}}{q_{0}q_{n}\kappa(u)}\,du.
		\end{align*}
		By \eqref{specmeasdmu} $\left(q_{0}q_{n}\kappa\left(\sqrt{\lambda}\right)\right)^{-1}\leq1$,
		and Lemma \ref{unifbdd} asserts the uniform boundedness of $\Phi^{\pm}$
		, so 
		\begin{align*}
			\|\Phi(\cdot,x)\|_{L^{2}(\Lambda,d\mu)} & \leq\essupp_{u\in\Lambda^{1/2}}|\Phi^{\pm}(u^{2},x)|\left(\frac{q_{0}+q_{n}}{2\pi}|\Lambda^{1/2}|\right)^{1/2}=C<\infty.
		\end{align*}
		for every  $x\in\bR$.
		
		Then by the claim and by the unitarity of
		$\mathcal{F}_{A_{p}}$, we obtain
		\begin{align*}
			\left|\int_{\Lambda}\mathcal{F}_{A_{p}}f(\lambda)\cdot\Phi(\lambda,x)\,d\mu(\lambda)\right|
			&
			\leq\|\Phi(\cdot,x)\|_{L^{2}(\Lambda,d\mu)}\|\mathcal{F}_{A_{p}}f\|_{L^{2}(\Lambda,d\mu)}\\
			&=|\Phi(\cdot,x)\|_{L^{2}(\Lambda,d\mu)}\|f\|_{2}\leq
			C\|f\|_{2} \, ,
		\end{align*}
		and the integral makes sense for every $x\in \bR $ and for all $f\in
		PW_{\Lambda}(A_{p})$. Furthermore, 
		\begin{align*}
			f(x)=\chi_{\Lambda}(A_{p})f(x) & =\int_{\Lambda}\mathcal{F}_{A_{p}}f(\lambda)\cdot\Phi(\lambda,x)\,d\mu(\lambda)\\
			& =\int_{\Lambda}\int_{\bR}f(y)\overline{\Phi(\lambda,y)}\cdot\Phi(\lambda,x)\,dy\,d\mu(\lambda)\\
			& =\int_{\bR}f(y)\int_{\Lambda}\overline{\Phi(\lambda,y)}\cdot\Phi(\lambda,x)\,d\mu(\lambda)\,dy\\
			& =\int_{\bR}f(y)\overline{k_{\Lambda}(x,y)}\,dy.
		\end{align*}
		Consequently  the
		evaluation map $f\mapsto f(x)$, $f\in PW_{\Lambda}(A_{p})$ is bounded,
		in other words,  $PW_{\Lambda}(A_{p})$ is a reproducing kernel Hilbert space.
		Formula \eqref{kref} now follows by substituting the expressions
		in Theorem \ref{lemsol} and Theorem \ref{specprojthm} to \eqref{kernelformula}. 
		See also  \cite[Thm.~XIII.5.24]{Dunford1988linear}).
	\end{proof}
	
	\subsection{Computation of the reproducing kernel\label{subsec:Computation-of-the}}
	
	In view of eventual numerical implementations, it may be helpful to
	make the structure of $k_\Lambda $ even  more explicit. By
	Theorem~\ref{thmunifbdd} $k_\Lambda $ depends the fundamental
	solutions $\Phi ^{\pm}$. 

	Fix $x,y\in\bR$. Since the connection
	coefficients $a^{\pm}(u^{2})$ and $b^{\pm}(u^{2})$ are almost periodic
	polynomials in $u$, there exist a positive integer $m(x,y)$ and
	real numbers $\alpha_{k}(x,y),\beta_{k}(x,y),$ $1\leq k\leq m(x,y)$
	such that 
	\begin{align}
		\vartheta(u,x,y)=\sum_{k=1}^{m(x,y)}\alpha_{k}(x,y)e^{i\beta_{k}(x,y)u}.\label{almvartheta}
	\end{align}
	Note that for $0\leq j,l\leq n$ fixed and $x\in I_{j},y\in
        I_{l}$, the coefficients $\alpha_{k}(x,y)$ depend only on $j$
        and $l$, and are thus constant on $I_j\times I_l$. We can
        therefore write
        $$
        \vartheta(u,x,y)=\sum_{j,l=1}^n \vartheta _{jl}(u) \chi _{I_j}(x)\chi _{I_l}(y)
        $$
with  almost periodic functions $\vartheta _{jl}$. This is the
formulation of Theorem~\ref{kernintro} of the introduction.

Furthermore, by substituting~\eqref{pw1} and \eqref{pw2}  in the definition of $\vartheta
$, we see that the exponents $\beta_{k}(x,y)$, $x\in I_{j}$,
	$y\in I_{l}$ are of the form 
	\begin{align}
		\beta_{k}(x,y)=c_{k}^{(j,l)}\pm q_{j}x\pm q_{l}y,\label{eq:expo}
	\end{align}
	where the scalars $c_{k}^{(j,l)}\in\bR$ depend on the jump positions $\{t_{r}\}_{r=1}^{n}$
	and the local parameters $\{q_{r}\}_{r=0}^{n}$. %

	Since $\kappa$ is bounded below on $(0,\infty)$ and $\Lambda$ has
	finite Lebesgue measure, the integral 
	\begin{align}
		J(s)=\frac{1}{2\pi}\int_{\Lambda^{1/2}}\frac{e^{isu}}{\kappa(u)}\,du,\quad s\in\bR
	\end{align}
	is a well-defined function whose Fourier transform is supported in
	$\overline{\Lambda^{1/2}}$. By \eqref{kref} and \eqref{almvartheta},
	we can now write the reproducing kernel $k_{\Lambda}$ as 
	\begin{align}
		k_{\Lambda}(x,y) & =\frac{1}{2\pi}\sum_{k=1}^{m(x,y)}\alpha_{k}(x,y)\int_{\Lambda^{1/2}}\frac{e^{i\beta_{k}(x,y)u}}{\kappa(u)}\,du=\sum_{k=1}^{m(x,y)}\alpha_{k}(x,y)J(\beta_{k}(x,y)).\label{eq:kerexpo}
	\end{align}
	The function $J$ is the Fourier transform of $\kappa $ occuring in the
	spectral measure and usually has to be computed numerically or by
	means of the series expansion (see the next section).
	
	\section{Concrete examples}
	
	\label{sec:ex}
	
	In this section, we derive explicit expressions for the reproducing
	kernel for the case $n=2$. A treatment of the case $n=1$ (two intervals of constant bandwidth)
	with a formula for the reproducing kernel and a sampling theorem  can be
	found in \cite[Sec.~4]{cpam}.

	\subsection{The case $n=2$: three intervals with constant local bandwidth}
	
	\label{secthreeex} We consider a step function $p$ with two jumps. 
	Without loss of generality we assume that  the middle interval is
	centered at the origin. We first determine the density function
	$\kappa$ in \eqref{eq:cc5} of Theorem~\ref{specprojthm}. 
	\begin{lem}
		\label{Jlem} Let $p_{0},p_{1},p_{2},T>0$ and $\Lambda\subset\bR_{0}^{+}$
		be of finite measure and set $q_{k}=p_{k}^{-1/2},k=0,1,2$. Set 
		\begin{align}
			p(x)&= p_0 \chi _{(-\infty , -T/2]}(x)  + p_1 \chi _{(-T/2,T/2]}(x)+
			p_2     \chi _{[(T/2, \infty )}(x) \, , \label{pthree} \\ 
			C & =\tfrac{1}{16q_{0}^{2}}\left[\left(1+\tfrac{q_{0}}{q_{1}}\right)^{2}\left(1+\tfrac{q_{1}}{q_{2}}\right)^{2}+\left(1-\tfrac{q_{0}}{q_{1}}\right)^{2}\left(1-\tfrac{q_{1}}{q_{2}}\right)^{2}\right],\label{constC}\\
			K & =\tfrac{1}{8q_{0}^{2}}\left(1-\tfrac{q_{0}^{2}}{q_{1}^{2}}\right)\left(1-\tfrac{q_{1}^{2}}{q_{2}^{2}}\right),\label{constK}\\
			\zeta & =2q_{1}T \, . 
		\end{align}
		Then the density in the spectral measure in \eqref{eq:cc5} is $\kappa
		(u) = C+K\cos\zeta u$ and the associated integral is 
		\[
		J(s)=\frac{1}{2\pi}\int_{\Lambda^{1/2}}\dfrac{e^{isu}}{C+K\cos\zeta u}\,du,\quad s\in\bR.
		\]
	\end{lem}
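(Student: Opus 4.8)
The plan is to specialize the recursion \eqref{reccoefrj} for the connection coefficients of $\Phi^{+}$ to the case $n=2$ with jump points $t_{1}=-T/2$ and $t_{2}=T/2$, and then read off $\kappa$ directly from its definition \eqref{eq:cc5}, namely $\kappa(u)=|a_{0}^{+}(u^{2})|^{2}/q_{0}^{2}$. For $n=2$ and $l=0$, formula \eqref{reccoefrj} reads $(a_{0}^{+}(z),b_{0}^{+}(z))^{T}=R_{1}(z)R_{2}(z)\,(1,0)^{T}$, so $a_{0}^{+}(z)$ is simply the $(1,1)$ entry of the $2\times2$ product $R_{1}(z)R_{2}(z)$, where the matrices $R_{k}$ are given explicitly in \eqref{Rj}.

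First I would carry out this matrix multiplication. Writing $u=\sqrt{z}$ for the principal square root (so $u>0$ when $z=\lambda>0$) and inserting $t_{1}=-T/2$, $t_{2}=T/2$ into the entries of \eqref{Rj}, the $(1,1)$ entry of $R_{1}R_{2}$ collapses to a sum of two exponentials, and after collecting prefactors one finds
\[
a_{0}^{+}(u^{2})=\tfrac{1}{4}\Big(A\,e^{i(T/2)(q_{0}-2q_{1}+q_{2})u}+B\,e^{i(T/2)(q_{0}+2q_{1}+q_{2})u}\Big),
\]
where $A=\big(1+\tfrac{q_{0}}{q_{1}}\big)\big(1+\tfrac{q_{1}}{q_{2}}\big)$ and $B=\big(1-\tfrac{q_{0}}{q_{1}}\big)\big(1-\tfrac{q_{1}}{q_{2}}\big)$. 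The only point requiring care is the sign bookkeeping in the exponents, which comes from $t_{1}<0$; everything else is a routine product of the two $2\times2$ matrices in \eqref{Rj}.

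Then I would take the modulus square of this two-term sum: with $a_{0}^{+}(u^{2})=\tfrac{1}{4}(Ae^{i\phi_{1}u}+Be^{i\phi_{2}u})$ and real $\phi_{1},\phi_{2}$ one has $|a_{0}^{+}(u^{2})|^{2}=\tfrac{1}{16}\big(A^{2}+B^{2}+2AB\cos((\phi_{1}-\phi_{2})u)\big)$, and here $\phi_{1}-\phi_{2}=(T/2)\big((q_{0}-2q_{1}+q_{2})-(q_{0}+2q_{1}+q_{2})\big)=-2q_{1}T$. Dividing by $q_{0}^{2}$ yields $\kappa(u)=\tfrac{1}{16q_{0}^{2}}(A^{2}+B^{2})+\tfrac{AB}{8q_{0}^{2}}\cos(2q_{1}Tu)$, which is precisely $C+K\cos\zeta u$ with $\zeta=2q_{1}T$, with $C$ as in \eqref{constC}, and with $K$ as in \eqref{constK} after factoring $AB=\big(1-\tfrac{q_{0}^{2}}{q_{1}^{2}}\big)\big(1-\tfrac{q_{1}^{2}}{q_{2}^{2}}\big)$. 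The stated formula for $J(s)$ is then nothing but the definition of $J$ from Section~\ref{subsec:Computation-of-the} specialized to this $\kappa$. There is no genuine obstacle beyond the bookkeeping; I would, however, also record for later use (although it is not part of the statement) that $C>|K|$, equivalently that $\kappa$ is bounded away from $0$, which is guaranteed by Corollary~\ref{kappnew} since $\kappa(u)\ge 1/(q_{0}q_{2})>0$, and this lower bound is what makes the integrand defining $J$ well-defined.
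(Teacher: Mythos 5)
Your proposal is correct and follows essentially the same route as the paper: compute $a_{0}^{+}$ from the recursion $(a_{0}^{+},b_{0}^{+})^{T}=R_{1}R_{2}(1,0)^{T}$ with $t_{1}=-T/2$, $t_{2}=T/2$, obtain the same two-exponential expression, and apply $|Ae^{i\alpha}+Be^{i\beta}|^{2}=A^{2}+B^{2}+2AB\cos(\alpha-\beta)$ to read off $C$, $K$, $\zeta$. Your exponent bookkeeping and the identity $AB=\bigl(1-\tfrac{q_{0}^{2}}{q_{1}^{2}}\bigr)\bigl(1-\tfrac{q_{1}^{2}}{q_{2}^{2}}\bigr)$ match the paper's computation exactly.
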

	
	\begin{proof}
		By Theorem \ref{lemsol}, the fundamental system $\Phi(u^{2},\cdot)=(\Phi^{+}(u^{2},\cdot),\Phi^{-}(u^{2},\cdot)),u\in(0,\infty)$
		is of the form 
		\begin{align*}
			\Phi^{+}(u^{2},x) & =\begin{cases}
				a_{0}^{+}(u^{2})e^{iq_{0}xu}+b_{0}^{+}(u^{2})e^{-iq_{0}xu}, & x\in(-\infty,-\frac{T}{2}]\\
				a_{1}^{+}(u^{2})e^{iq_{1}xu}+b_{1}^{+}(u^{2})e^{-iq_{1}xu}, & x\in(-\frac{T}{2},\frac{T}{2}]\\
				e^{iq_{2}xu}, & x\in(\frac{T}{2},\infty)
			\end{cases}\\
			\Phi^{-}(u^{2},x) & =\begin{cases}
				e^{-iq_{0}xu}, & x\in(-\infty,-\frac{T}{2}]\\
				a_{1}^{-}(u^{2})e^{iq_{1}xu}+b_{1}^{-}(u^{2})e^{-iq_{1}xu}, & x\in(-\frac{T}{2},\frac{T}{2}]\\
				a_{2}^{-}(u^{2})e^{iq_{2}xu}+b_{2}^{-}(u^{2})e^{-iq_{2}xu}, & x\in(\frac{T}{2},\infty).
			\end{cases}
		\end{align*}
		Applying \eqref{reccoefrj} for $l = 2$ yields
		\begin{align*}
			\begin{bmatrix}a_{0}^{+}(u^{2})\\
				b_{0}^{+}(u^{2})
			\end{bmatrix} & =\dfrac{1}{4}\begin{bmatrix}\left(1+\frac{q_{0}}{q_{1}}\right)\left(1+\frac{q_{1}}{q_{2}}\right)e^{i\frac{T}{2}\left(q_{0}-2q_{1}+q_{2}\right)u}+\left(1-\frac{q_{0}}{q_{1}}\right)\left(1-\frac{q_{1}}{q_{2}}\right)e^{i\frac{T}{2}\left(q_{0}+2q_{1}+q_{2}\right)u}\\
				\left(1-\frac{q_{0}}{q_{1}}\right)\left(1+\frac{q_{1}}{q_{2}}\right)e^{i\frac{T}{2}\left(-q_{0}-2q_{1}+q_{2}\right)u}+\left(1+\frac{q_{0}}{q_{1}}\right)\left(1-\frac{q_{1}}{q_{2}}\right)e^{i\frac{T}{2}\left(-q_{0}+2q_{1}+q_{2}\right)u}
			\end{bmatrix} 
		\end{align*}
		Since $|Ae^{i\alpha} + Be^{i\beta}|^2 = A^2 + B^2 + 2AB \cos (\alpha -
		\beta )$, we obtain the claimed formula for $\kappa $ from the above
		as 
		\begin{align*}
			\kappa(u)&=\tfrac{|a_{0}^{+}(u^{2})|^{2}}{q_{0}^{2}} \\
			&= \tfrac{1}{16q_0^2}\left[( 1+\tfrac{q_0}{q_1} \big)^2 \big( 1+\tfrac{q_1}{q_2} \big)^2 +
			\big( 1-\tfrac{q_0}{q_1}\big)^2 \big(  1-\tfrac{q_1}{q_2}\big)^2\right]\ +  
			\tfrac{1}{8q_0^2} \big( 1-\tfrac{q_0^2}{q_1^2} \big) \big( 1-\tfrac{q_1^2}{q_2^2} \big)
			\cos (2Tq_1u)\\
			&  =C+K\cos\zeta u \, ,
		\end{align*}
		with the constants $C$ and $K$ from \eqref{constC} and
		\eqref{constK}. Consequently, 
		\begin{align}
			J(s)=\dfrac{1}{2\pi}\int_{\Lambda^{1/2}}\frac{e^{isu}}{C+K\cos\zeta u}\,du\label{eq:J}
		\end{align}
		for any $s\in\bR$. 
	\end{proof}
	To the best of our knowledge, $J$ does not belong to any known  class of
	special functions. If the spectrum $\Lambda $ is an interval, we can
	expand $J$ into a series
	in terms of the cardinal sine function $\sinc\,x = \frac{\sin x}{x}$. Its partial sums
	converge  to $J$ uniformly on $\bR$ and at a geometric rate. 
	\begin{thm}
		\label{jformthm} Let $R=\frac{K}{C}$ and $\Lambda = [0,\Omega ]$. Then for $s\in\bR$, 
		\begin{align*}
			J(s)=\dfrac{\Omega^{1/2}}{2C\pi}\sum_{m=0}^{\infty}\sum_{l=0}^{m}\left(-\frac{R}{2}\right)^{m}\binom{m}{l}e^{i\tfrac{\Omega^{1/2}}{2}(s+(m-2l)\zeta)}\sinc\left(\tfrac{\Omega^{1/2}}{2}(s+(m-2l)\zeta)\right).
		\end{align*}
		Moreover, the $M$-th partial sum $J_{M}$ 
		of $J$ satisfies the error estimate 
		\begin{align*}
			\sup_{s\in\bR}|J(s)-J_{M}(s)|\leq\dfrac{\Omega^{1/2}}{2C\pi}\dfrac{|R|^{M+1}}{1-|R|}.
		\end{align*}
	\end{thm}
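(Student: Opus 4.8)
The plan is to expand the integrand of $J$ as a geometric series in $u$ that converges uniformly on $\bR$, integrate term by term, and evaluate the resulting elementary integrals in closed form.

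The first thing to establish is that $|R|=|K|/C<1$, so that the geometric expansion is legitimate. This follows from Corollary~\ref{kappnew}: since $\kappa(u)=C+K\cos\zeta u\geq (q_0q_2)^{-1}>0$ for all $u$ and $\cos\zeta u$ attains the value $-1$ (because $\zeta=2q_1T>0$), we get $C-|K|\geq (q_0q_2)^{-1}>0$, while $C>0$ is evident from \eqref{constC}. Alternatively one checks directly from \eqref{constC}--\eqref{constK} that, writing $a=1+\tfrac{q_0}{q_1}$, $b=1-\tfrac{q_0}{q_1}$, $c=1+\tfrac{q_1}{q_2}$, $d=1-\tfrac{q_1}{q_2}$, one has $C\pm K=\tfrac{1}{16q_0^2}(ac\pm bd)^2$ with $ac+bd=2+2\tfrac{q_0}{q_2}>0$ and $ac-bd=2\tfrac{q_1}{q_2}+2\tfrac{q_0}{q_1}>0$, hence $C-|K|>0$. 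In either case $|R\cos\zeta u|\leq |R|<1$ for every $u\in\bR$, so
\[
\frac{1}{\kappa(u)}=\frac{1}{C}\,\frac{1}{1+R\cos\zeta u}=\frac{1}{C}\sum_{m=0}^{\infty}(-R\cos\zeta u)^{m},
\]
with the series converging absolutely and uniformly in $u$, and with tail after $M$ terms bounded by $\tfrac{1}{C}\tfrac{|R|^{M+1}}{1-|R|}$ uniformly in $u$.

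Next I would insert the binomial expansion $\cos^{m}(\zeta u)=2^{-m}\sum_{l=0}^{m}\binom{m}{l}e^{i(m-2l)\zeta u}$, which turns the $m$-th term into $\left(-\tfrac{R}{2}\right)^{m}\sum_{l=0}^{m}\binom{m}{l}e^{i(m-2l)\zeta u}$. Since $\Lambda^{1/2}=[0,\Omega^{1/2}]$ has finite length and the series for $\kappa^{-1}$ converges uniformly on this interval, I may interchange summation and integration in $J(s)=\tfrac{1}{2\pi}\int_{0}^{\Omega^{1/2}}e^{isu}\kappa(u)^{-1}\,du$. It then remains to evaluate, for $b\in\bR$,
\[
\int_{0}^{\Omega^{1/2}}e^{ibu}\,du=\Omega^{1/2}\,e^{i\frac{\Omega^{1/2}}{2}b}\,\sinc\!\left(\tfrac{\Omega^{1/2}}{2}b\right),
\]
valid also at $b=0$ with $\sinc 0=1$, which one obtains by factoring out $e^{i\Omega^{1/2}b/2}$ and using $2i\sin\theta=e^{i\theta}-e^{-i\theta}$. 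Taking $b=s+(m-2l)\zeta$ and collecting the constant $\tfrac{1}{2\pi C}\cdot\Omega^{1/2}$ gives precisely the claimed double series for $J(s)$.

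For the error estimate I would not re-sum but instead truncate at the level of the geometric series: with $J_{M}$ the $M$-th partial sum,
\[
J(s)-J_{M}(s)=\frac{1}{2\pi}\int_{0}^{\Omega^{1/2}}e^{isu}\left(\frac{1}{\kappa(u)}-\frac{1}{C}\sum_{m=0}^{M}(-R\cos\zeta u)^{m}\right)du,
\]
and the parenthesis is bounded in absolute value by $\tfrac{1}{C}\sum_{m\geq M+1}|R|^{m}=\tfrac{1}{C}\tfrac{|R|^{M+1}}{1-|R|}$ for every $u$; integrating over an interval of length $\Omega^{1/2}$ yields $\sup_{s\in\bR}|J(s)-J_{M}(s)|\leq\tfrac{\Omega^{1/2}}{2C\pi}\tfrac{|R|^{M+1}}{1-|R|}$. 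Apart from the inequality $|R|<1$, every step is routine; the only care needed is in the bookkeeping of the double index and in justifying the interchange of sum and integral, both of which are supplied by the uniform convergence established in the first step. The geometric rate obtained here is exactly what makes the expansion usable for numerically evaluating the reproducing kernel via \eqref{eq:kerexpo}.
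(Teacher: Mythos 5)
Your proposal is correct and follows essentially the same route as the paper: geometric expansion of $1/\kappa(u)=\tfrac{1}{C}(1+R\cos\zeta u)^{-1}$, term-by-term integration justified by uniform convergence, binomial expansion of $\cos^{m}\zeta u$, and evaluation of $\int_{0}^{\Omega^{1/2}}e^{ibu}\,du$ as a shifted $\sinc$; your tail bound, obtained by truncating the geometric series inside the integral, is an equivalent rearrangement of the paper's bound $|F_m(s)|\leq\Omega^{1/2}$. The only genuine addition is your explicit verification that $|R|<1$ via $C\pm K=\tfrac{1}{16q_0^2}(ac\pm bd)^2$ with $ac\pm bd>0$, which the paper merely asserts in the remark following the theorem.
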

	
	Note that by the definition of $K$ and $C$  we have $|R| = |K|/C
	<1$ so that the integrand of $J$ does not have any
	singularities. Therefore the error estimate implies uniform
	convergence of the partial 
	sums and this rate depends only on $R$, which in turn depends only on
	the parameters of the local bandwidths $p$. 
	\begin{proof}
		By assumption we have $|R|<1$. The expression for $J(s)$, $s\in\bR$,
		can be expanded in a geometric series. 
		\begin{align}
			\qquad J(s) & =\dfrac{1}{2\pi
				C}\int_{0}^{\Omega^{1/2}}\dfrac{e^{isu}}{1-\left(-R\cos\zeta
				u\right)}\,du=\dfrac{1}{2\pi C}\sum_{m=0}^{\infty}\left(-R\right)^{m}\int_{0}^{\Omega^{1/2}}e^{isu}\cos^{m}\zeta u\,du\,.\label{eq:jm}
		\end{align}
		The interchange of the above integral and sum follows from Weierstrass
		$M$-test. For $m\in\bN_{0}$, define the bandlimited function 
		\[
		F_{m}(s)=\int_{0}^{\Omega^{1/2}}e^{isu}\cos^{m}\zeta u\,du,\quad s\in\bR
		\]
		so that $J=\frac{1}{2\pi C}\sum_{m=0}^{\infty}\left(-R\right)^{m}F_{m}$.
		To compute $F_{m}$, we write cosine using complex exponentials: 
		\begin{align}
			F_{m}(s) & =\int_{0}^{\Omega^{1/2}}e^{isu}\cdot\left(\dfrac{e^{i\zeta u}+e^{-i\zeta u}}{2}\right)^{m}\,du\nonumber \\
			& =\frac{1}{2^{m}}\sum_{l=0}^{m}\binom{m}{l}\int_{0}^{\Omega^{1/2}}e^{i(s+(m-2l)\zeta)u}\,du\nonumber \\
			& =\frac{\Omega^{1/2}}{2^{m}}\sum_{l=0}^{m}\binom{m}{l}e^{i\tfrac{\Omega^{1/2}}{2}(s+(m-2l)\zeta)}\sinc\left(\tfrac{\Omega^{1/2}}{2}(s+(m-2l)\zeta)\right).\label{fm2}
		\end{align}
		Substituting \eqref{fm2} to \eqref{eq:jm} gives the desired expansion.
		Now, since $|F_{m}(s)|\leq\Omega^{1/2}$ for all $m\in\bN_{0}$, then
		for $M\in\bN_{0}$ we observe that 
		\begin{align*}
			|J(s)-J_{M}(s)|=\frac{1}{2\pi
				C}\left|\sum_{m=M+1}^{\infty}(-R)^{m}F_{m}(s)\right|\leq\frac{\Omega^{1/2}}{2\pi
				C}\sum_{m=M+1}^{\infty}|R|^{m}=\frac{\Omega^{1/2}}{2\pi
				C}\frac{|R|^{M+1}}{1-|R|}.
		\end{align*}
		Taking the supremum over all $s\in\bR$ completes the proof. 
	\end{proof}
	
		\begin{figure}
		\centering \includegraphics[width=0.95\linewidth]{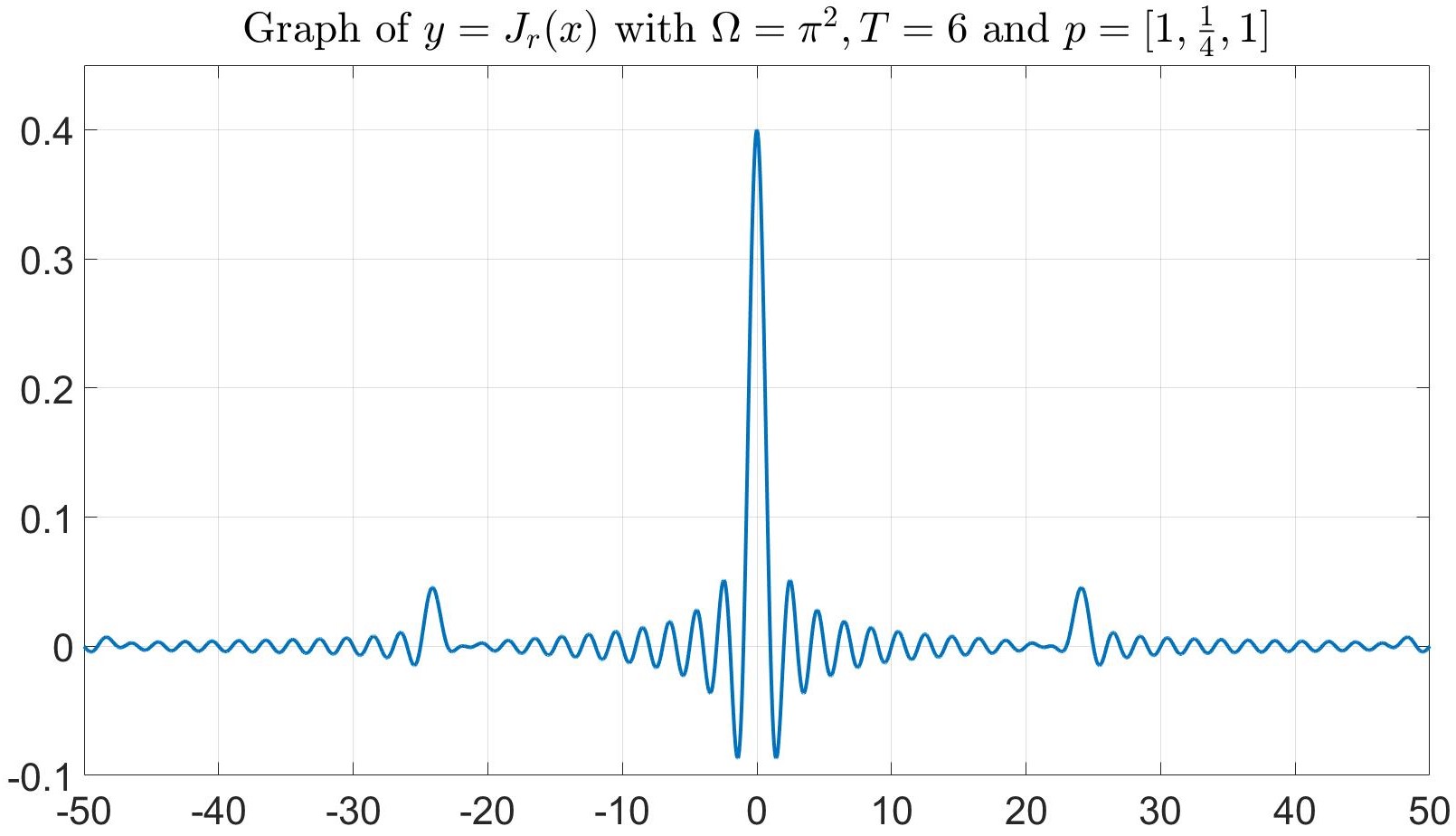} \caption{Graph of $J_{r}$ when $\Lambda=[0,\pi^{2}]$, $T=6$, $p_{0}=p_{2}=1$
			and $p_{1}=\frac{1}{4}$ on the interval $[-50,50]$. }
		\label{Jgraph}
	\end{figure}
	
	\begin{rem}
		\label{Jrealrem}
		(i) Using the double angle identity for the sine function, one can show
		that the real part $J_{r}$ of $J$ is given by 
		\begin{align}
			J_{r}(s) & =\dfrac{\Omega^{1/2}}{2C\pi}\sum_{m=0}^{\infty}\sum_{l=0}^{m}\left(-\frac{R}{2}\right)^{m}\binom{m}{l}\sinc\left(\Omega^{1/2}(s+(m-2l)\zeta)\right),\quad s\in\bR.\label{eq:Jr}
		\end{align}
		
		(ii) In the special case $\Omega ^{1/2} = 2\pi \zeta l = 4\pi q_1T l$ for
		some $l\in \bN$, i.e., if the bandwidth is correlated to the
		parameters of $p$, then 
		a closed formula for $J$  can be derived in terms of special
		functions. See \cite[Thms.~B.0.4, B.0.5]{CelizDissertation}.
		
		(iii) In the general case, an alternative expansion for $J_{r}$
		can be derived by a change of variables $m-2l\to k$ in \eqref{eq:Jr}
		and a change of the summation order. 
		With %
		\begin{align}
			c_{k} &=\frac{\Omega^{1/2}}{2C\pi}\sum_{j=0}^{\infty}\binom{2j+|k|}{j}\left(-\frac{R}{2}\right)^{2j+|k|}\\
			&=\frac{\Omega^{1/2}}{2C\pi}\left(-\frac{R}{2}\right)^{|k|}\,_{2}F_{1}\left(\frac{|k|+1}{2},\frac{|k|+2}{2};|k|+1;R^{2}\right),\quad k\in\bZ,\label{ckhyp}
		\end{align}
		we obtain 
		\begin{align*}
			J_{r}(s)=\sum_{k=-\infty}^{\infty}c_{k}\sinc(\Omega^{1/2}(s-k\zeta)).
		\end{align*}
	
		If $s\in\bigcap_{k\in\bZ}\left(k\zeta+\tfrac{\pi}{\Omega^{1/2}}\bZ^{*}\right)$,
		then $J_{r}(s)=0$. In Figure \ref{Jgraph} we see that for the indicated
		set of parameters, the zeros of $J_{r}$ are all integers that are
		not a multiple of $\zeta=2\cdot2\cdot6=24$ where peaks with decaying
		heights occur. 
	\end{rem}
	
	We now derive a complete formula for the reproducing kernel of $PW_{[0,\Omega]}(A_{p})$
	with $p$ given in \eqref{pthree}. 
	\begin{thm}
		\label{ker3pardami}
		Let $p_{0},p_{1},p_{2},T>0$, $\Lambda = [0,\Omega]
		\subset\bR_{0}^{+}$  and  
		$ p= p_0 \chi _{(-\infty , -T/2]}  + p_1 \chi _{(-T/2,T/2]}+
		p_2     \chi _{[(T/2, \infty )}$. 
		Then the reproducing kernel of
		$PW_{[0,\Omega]}(A_{p})$ is of the form 
		\begin{align*}
			k_{\Lambda}(x,y)=\sum_{j,l=0}^{2}k_{{jl}}(x,y)\chi_{I_{j}}(x)\chi_{I_{l}}(y),\quad x,y\in\bR.
		\end{align*}
		The functions $k_{{jl}}$ are 
		\begingroup
		\allowdisplaybreaks
		\begin{flalign*}
			k_{00}(x,y) & =\tfrac{q_{0}\Omega^{1/2}}{\pi}\sinc(q_{0}\Omega^{1/2}(x-y))+\tfrac{1}{4q_{0}}\left(1-\tfrac{q_{0}^{2}}{q_{1}^{2}}\right)\left(1+\tfrac{q_{1}^{2}}{q_{2}^{2}}\right)J_{r}(q_{0}(x+y+T))\\
			& +\tfrac{1}{8q_{0}}\left(1-\tfrac{q_{0}}{q_{1}}\right)^{2}\left(1-\tfrac{q_{1}^{2}}{q_{2}^{2}}\right)J_{r}(q_{0}(x+y+T)+2q_{1}T)\\
			&
			+\tfrac{1}{8q_{0}}\left(1+\tfrac{q_{0}}{q_{1}}\right)^{2}\left(1-\tfrac{q_{1}^{2}}{q_{2}^{2}}\right)J_{r}(q_{0}(x+y+T)-2q_{1}T)\\
			k_{11}(x,y) & =\tfrac{1}{2}\left[\tfrac{1}{q_{0}}\left(1+\tfrac{q_{1}^{2}}{q_{2}^{2}}\right)+\tfrac{1}{q_{2}}\left(1+\tfrac{q_{1}^{2}}{q_{0}^{2}}\right)\right]J_{r}(q_{1}(x-y))+\tfrac{1}{2q_{0}}\left(1-\tfrac{q_{1}^{2}}{q_{2}^{2}}\right)J_{r}(q_{1}(x+y-T))\\
			& +\tfrac{1}{2q_{2}}\left(1-\tfrac{q_{1}^{2}}{q_{0}^{2}}\right)J_{r}(q_{1}(x+y+T)),\\
			k_{22}(x,y) & =\tfrac{q_{2}\Omega^{1/2}}{\pi}\sinc(q_{2}\Omega^{1/2}(x-y))+\tfrac{1}{4q_{2}}\left(1+\tfrac{q_{1}^{2}}{q_{0}^{2}}\right)\left(1-\tfrac{q_{2}^{2}}{q_{1}^{2}}\right)J_{r}(q_{2}(x+y-T))\\
			& +\tfrac{1}{8q_{2}}\left(1-\tfrac{q_{1}^{2}}{q_{0}^{2}}\right)\left(1-\tfrac{q_{2}}{q_{1}}\right)^{2}J_{r}(q_{2}(x+y-T)-2q_{1}T)\\
			& +\tfrac{1}{8q_{2}}\left(1-\tfrac{q_{1}^{2}}{q_{0}^{2}}\right)\left(1+\tfrac{q_{2}}{q_{1}}\right)^{2}J_{r}(q_{2}(x+y-T)+2q_{1}T),\\
			k_{01}(x,y) & =k_{10}(y,x)=\tfrac{1}{4q_{0}}\left(1+\tfrac{q_{0}}{q_{1}}\right)\left(1+\tfrac{q_{1}}{q_{2}}\right)^{2}J_{r}(q_{0}(x+\tfrac{T}{2})-q_{1}(y+\tfrac{T}{2}))\\
			& +\tfrac{1}{4q_{0}}\left(1-\tfrac{q_{0}}{q_{1}}\right)\left(1-\tfrac{q_{1}}{q_{2}}\right)^{2}J_{r}(q_{0}(x+\tfrac{T}{2})+q_{1}(y+\tfrac{T}{2}))\\
			&
			+\tfrac{1}{4q_{0}}\left(1+\tfrac{q_{0}}{q_{1}}\right)\left(1-\tfrac{q_{1}^{2}}{q_{2}^{2}}\right)J_{r}(q_{0}(x+\tfrac{T}{2})+q_{1}(y-\tfrac{3T}{2}))\\
			& +\tfrac{1}{4q_{0}}\left(1-\tfrac{q_{0}}{q_{1}}\right)\left(1-\tfrac{q_{1}^{2}}{q_{2}^{2}}\right)J_{r}(q_{0}(x+\tfrac{T}{2})-q_{1}(y-\tfrac{3T}{2})),\\
			k_{02}(x,y) & =k_{20}(y,x)=\tfrac{1}{2q_{0}}\left(1-\tfrac{q_{0}}{q_{1}}\right)\left(1-\tfrac{q_{1}}{q_{2}}\right)J_{r}(q_{0}(x+\tfrac{T}{2})+q_{1}T-q_{2}(y-\tfrac{T}{2}))\\
			& +\tfrac{1}{2q_{0}}\left(1+\tfrac{q_{0}}{q_{1}}\right)\left(1+\tfrac{q_{1}}{q_{2}}\right)J_{r}(q_{0}(x+\tfrac{T}{2})-q_{1}T-q_{2}(y-\tfrac{T}{2})),\\
			k_{12}(x,y) & =k_{21}(y,x)=\tfrac{1}{4q_{2}}\left(1+\tfrac{q_{1}}{q_{0}}\right)^{2}\left(1+\tfrac{q_{2}}{q_{1}}\right)J_{r}(q_{1}(x-\tfrac{T}{2})-q_{2}(y-\tfrac{T}{2}))\\
			& +\tfrac{1}{4q_{2}}\left(1-\tfrac{q_{1}}{q_{0}}\right)^{2}\left(1-\tfrac{q_{2}}{q_{1}}\right)J_{r}(q_{1}(x-\tfrac{T}{2})+q_{2}(y-\tfrac{T}{2}))\\
			&
			+\tfrac{1}{4q_{2}}\left(1-\tfrac{q_{1}^{2}}{q_{0}^{2}}\right)\left(1+\tfrac{q_{2}}{q_{1}}\right)J_{r}(q_{1}(x+\tfrac{3T}{2})+q_{2}(y-\tfrac{T}{2}))\\
			& +\tfrac{1}{4q_{2}}\left(1-\tfrac{q_{1}^{2}}{q_{0}^{2}}\right)\left(1-\tfrac{q_{2}}{q_{1}}\right)J_{r}(q_{1}(x+\tfrac{3T}{2})-q_{2}(y-\tfrac{T}{2})).
		\end{flalign*}
	\endgroup
	\end{thm}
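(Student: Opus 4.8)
The plan is to feed the explicit fundamental system of Theorem~\ref{lemsol} into the general kernel formula \eqref{kref} and evaluate it on each product of intervals $I_j\times I_l$, $0\le j,l\le 2$. First I would record the connection coefficients for $n=2$: with the matrices $L_1,L_2$ of \eqref{eq:cc3} and $R_1=L_1^{-1}$, $R_2=L_2^{-1}$ of \eqref{Rj}, the recursions \eqref{reccoefrj}--\eqref{reccoeflk} give $(a_1^+,b_1^+)^{T}=R_2(1,0)^{T}$, $(a_0^+,b_0^+)^{T}=R_1R_2(1,0)^{T}$ (the latter already computed in the proof of Lemma~\ref{Jlem}), $(a_1^-,b_1^-)^{T}=L_1(0,1)^{T}$ and $(a_2^-,b_2^-)^{T}=L_2L_1(0,1)^{T}$. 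Thus $a_1^\pm,b_1^\pm,a_2^-,b_2^-$ are single exponentials in $u$ of constant modulus, while $a_0^+,b_0^+$ are genuine two-term trigonometric polynomials; substituting these coefficients into the piecewise expressions for $\Phi^+(u^2,\cdot)$ and $\Phi^-(u^2,\cdot)$ already written down in the proof of Lemma~\ref{Jlem} makes all the relevant local forms explicit.

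Next I would compute $\vartheta(u,x,y)=\tfrac1{q_0}\overline{\Phi^+(u^2,x)}\Phi^+(u^2,y)+\tfrac1{q_2}\overline{\Phi^-(u^2,x)}\Phi^-(u^2,y)$ for $x\in I_j$, $y\in I_l$ and split it into a resonant part (the terms with exponent $\pm q_j(x-y)u$; present only when $j=l$) and a non-resonant part. On $I_0\times I_0$ the resonant terms carry the $u$-dependent moduli $|a_0^+(u^2)|^2=q_0^2\kappa(u)$ and $|b_0^+(u^2)|^2=q_0^2\kappa(u)-q_0/q_2$ (Corollary~\ref{kappnew}); adding the contribution $\tfrac1{q_2}e^{iq_0(x-y)u}$ of $\Phi^-$ collapses this part to $2q_0\kappa(u)\cos\!\big(q_0(x-y)u\big)$, and similarly on $I_2\times I_2$ to $2q_2\kappa(u)\cos\!\big(q_2(x-y)u\big)$. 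On $I_1\times I_1$ the coefficients $a_1^\pm,b_1^\pm$ have constant modulus, so by \eqref{id1} and \eqref{hidentity} the resonant part equals instead $2P\cos\!\big(q_1(x-y)u\big)$ with the \emph{constant} $P=\tfrac1{4q_0}(1+\tfrac{q_1}{q_2})^{2}+\tfrac1{4q_2}(1-\tfrac{q_1}{q_0})^{2}$. Each non-resonant part is a finite sum $\sum_k\alpha_k(x,y)e^{i\beta_k(x,y)u}$ with real coefficients $\alpha_k$ and exponents $\beta_k(x,y)=c_k^{(j,l)}\pm q_jx\pm q_ly$ as in \eqref{eq:expo}; the $\alpha_k$ and $c_k^{(j,l)}$ come from multiplying out the explicit exponentials, collecting pairs sharing an exponent, and simplifying $q$-ratios via identities such as $(1+\tfrac{q_a}{q_b})^{2}+(1-\tfrac{q_a}{q_b})^{2}=2(1+\tfrac{q_a^{2}}{q_b^{2}})$ together with the cancellation of the cross terms --- exactly as in the computation of $\kappa$ in Lemma~\ref{Jlem}.

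Then I would integrate over $\Lambda^{1/2}=[0,\Omega^{1/2}]$. Dividing the resonant part by $\kappa(u)$ cancels $\kappa$ in the $I_0$- and $I_2$-cases, leaving $\tfrac1{2\pi}\int_0^{\Omega^{1/2}}2q_j\cos\!\big(q_j(x-y)u\big)\,du=\tfrac{q_j\Omega^{1/2}}{\pi}\sinc\!\big(q_j\Omega^{1/2}(x-y)\big)$, the $\sinc$-term of $k_{00}$ and $k_{22}$; in the $I_1$-case the constant $P$ remains, giving $2P\,J_r\!\big(q_1(x-y)\big)$, with $2P$ equal to the coefficient $\tfrac12\big[\tfrac1{q_0}(1+\tfrac{q_1^{2}}{q_2^{2}})+\tfrac1{q_2}(1+\tfrac{q_1^{2}}{q_0^{2}})\big]$ of $J_r(q_1(x-y))$ in $k_{11}$, and no $\sinc$. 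For the non-resonant terms I use $\tfrac1{2\pi}\int_{\Lambda^{1/2}}e^{i\beta_k u}/\kappa(u)\,du=J(\beta_k)$ from Lemma~\ref{Jlem}. Finally, since $p$ is real-valued, $A_p$ --- and hence $\chi_\Lambda(A_p)$ and its kernel --- commutes with complex conjugation, so $k_\Lambda$ is real-valued; equating $k_\Lambda$ with its real part and using that the $\alpha_k$ are real replaces each $\alpha_kJ(\beta_k)$ by $\alpha_kJ_r(\beta_k)$, which is the asserted form. Matching the six resulting expressions with the claimed $k_{00},k_{11},k_{22},k_{01},k_{02},k_{12}$, and obtaining $k_{10},k_{20},k_{21}$ from the symmetry $k_\Lambda(x,y)=k_\Lambda(y,x)$ (reality together with the Hermitian identity $\overline{k_\Lambda(x,y)}=k_\Lambda(y,x)$), finishes the proof.

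The main obstacle is the bookkeeping rather than any conceptual point: for each of the six regions one must keep track of the several exponents $c_k^{(j,l)}\pm q_jx\pm q_ly$ with their coefficients, correctly merge exponentials with coinciding exponents, and perform all the $q$-ratio simplifications. The one thing to be careful about is that the $u$-dependent moduli $|a_0^+(u^2)|^2$ and $|b_n^-(u^2)|^2$ must be rewritten as $q_0^2\kappa(u)$, respectively $q_n^2\kappa(u)$, \emph{before} dividing by $\kappa(u)$; otherwise the cancellation responsible for the $\sinc$-terms is invisible and one spuriously obtains extra $J_r$-terms where a $\sinc$ should appear.
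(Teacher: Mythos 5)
Your proposal is correct and follows essentially the same route as the paper: substitute the explicit fundamental system into \eqref{kref} block by block, use Corollary~\ref{kappnew} and the identities \eqref{id1}, \eqref{hidentity} to collapse the resonant terms (producing the $\kappa$-cancellation and hence the $\sinc$-terms on $I_0\times I_0$ and $I_2\times I_2$, and the constant coefficient on $I_1\times I_1$), and express the remaining terms via $J_r$. The only cosmetic difference is that the paper further reduces the casework to $(0,0),(0,1),(0,2),(1,1)$ by exploiting the reflection symmetry $(x,y,T,q_0,q_1,q_2)\to(y,x,-T,q_2,q_1,q_0)$ in addition to Hermitian symmetry, whereas you treat all six upper blocks directly.
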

	
	\begin{proof}
		Initially we have to consider the following nine pairs of indices: 
		\[
		(j,l)\in\{(0,0),(0,1),(0,2),(1,0),(1,1),(1,2),(2,0),(2,1),(2,2)\}.
		\]
		By the symmetry of the reproducing kernel $k_{\Lambda}(x,y)=\overline{k_{\Lambda}(y,x)}$
		for all $x,y\in\bR$, we see that $(1,0)$ follows from $(0,1)$,
		$(2,0)$ follows from $(0,2)$, and $(2,1)$ follows from $(1,2)$.
		Furthermore, since the knots of $p$ are symmetric at the origin,
		$(2,2)$ follows from $(0,0)$ and $(1,2)$ follows from $(0,1)$
		by applying the replacement rule $(x,y,T,q_{0},q_{1},q_{2})\to(y,x,-T,q_{2},q_{1},q_{0})$.
		Thus, it suffices to take the cases $(j,l)\in\{(0,0),(0,1),(0,2),(1,1)\}.$ 
		
		Suppose $(j,l)=(0,0)$, i.e., $x,y\leq-\tfrac{T}{2}$. Then upon rearranging
		the terms, we obtain  
		\begin{align}
			\vartheta(u,x,y) &  =\frac{1}{q_{0}}\overline{\Phi^{+}(u^{2},x)}\Phi^{+}(u^{2},y)+\frac{1}{q_{n}}\overline{\Phi^{-}(u^{2},x)}\Phi^{-}(u^{2},y),\\
			&=\tfrac{1}{q_{0}}(\overline{a_{0}^{+}(u^{2})}e^{-iq_{0}xu}+\overline{b_{0}^{+}(u^{2})}e^{iq_{0}xu})(a_{0}^{+}(u^{2})e^{iq_{0}yu}+b_{0}^{+}(u^{2})e^{-iq_{0}yu}) +\tfrac{1}{q_{2}}e^{iq_{0}xu}e^{-iq_{0}yu}\nonumber \\
			& =\tfrac{|a_{0}^{+}(u^{2})|^{2}}{q_{0}}e^{-iq_{0}(x-y)u}+\left(\tfrac{1}{q_{2}}+\tfrac{|b_{0}^{+}(u^{2})|^{2}}{q_{0}}\right)e^{iq_{0}(x-y)u}\nonumber \\
			& +\tfrac{1}{q_{0}}\left(a_{0}^{+}(u^{2})\overline{b_{0}^{+}(u^{2})}e^{iq_{0}(x+y)u}+\overline{a_{0}^{+}(u^{2})}b_{0}^{+}(u^{2})e^{-iq_{0}(x+y)u}\right).
		\end{align}
		The above expression simplifies, because 
		$\frac{|b_{0}^{+}(\lambda)|^{2}}{q_{0}^{2}}+\frac{1}{q_{0}q_{n}}=\frac{|a_{0}^{+}(\lambda)|^{2}}{q_{0}^{2}}$
		by   Corollary \ref{kappnew}
		and by  the definition of $\kappa = |a^+_0(u)|^2/q_0^2$ in
		\eqref{specmeasdmu}. Upon inserting the connection coefficients
		$a_0^+$ and $b_0^+$ from the proof of Lemma~\ref{Jlem}, we obtain  
		\begin{align*}
			\vartheta(u,x,y) & =2q_{0}\kappa(u)\cos(q_{0}(x-y)u)  +\tfrac{1}{4q_{0}}\left(1+\tfrac{q_{1}^{2}}{q_{2}^{2}}\right)\left(1-\tfrac{q_{0}^{2}}{q_{1}^{2}}\right)\cos(q_{0}(x+y+T)u)\\
			& +\tfrac{1}{8q_{0}}\left(1-\tfrac{q_{1}^{2}}{q_{2}^{2}}\right)\left(1-\tfrac{q_{0}}{q_{1}}\right)^{2}\cos((q_{0}(x+y+T)+2q_{1}T)u)\\
			& +\tfrac{1}{8q_{0}}\left(1-\tfrac{q_{1}^{2}}{q_{2}^{2}}\right)\left(1+\tfrac{q_{0}}{q_{1}}\right)^{2}\cos((q_{0}(x+y+T)-2q_{1}T)u).
		\end{align*}
		Hence, 
		\begin{align*}
			k_{\Lambda}(x,y) & =\dfrac{1}{2\pi}\int_{\Lambda^{1/2}}\dfrac{\vartheta(u,x,y)}{\kappa(u)}\,du\\
			& =\tfrac{q_{0}\Omega^{1/2}}{\pi}\sinc(q_{0}\Omega^{1/2}(x-y)) +\tfrac{1}{4q_{0}}\left(1+\tfrac{q_{1}^{2}}{q_{2}^{2}}\right)\left(1-\tfrac{q_{0}^{2}}{q_{1}^{2}}\right)J_{r}(q_{0}(x+y+T))\\
			& +\tfrac{1}{8q_{0}}\left(1-\tfrac{q_{1}^{2}}{q_{2}^{2}}\right)\left(1-\tfrac{q_{0}}{q_{1}}\right)^{2}J_{r}(q_{0}(x+y+T)+2q_{1}T)\\
			& +\tfrac{1}{8q_{0}}\left(1-\tfrac{q_{1}^{2}}{q_{2}^{2}}\right)\left(1+\tfrac{q_{0}}{q_{1}}\right)^{2}J_{r}(q_{0}(x+y+T)-2q_{1}T)\\
			& =k_{00}(x,y).
		\end{align*}
		The remaining cases are proved in a similar fashion. 
	\end{proof}
	\begin{figure}
		\includegraphics[width=1\linewidth]{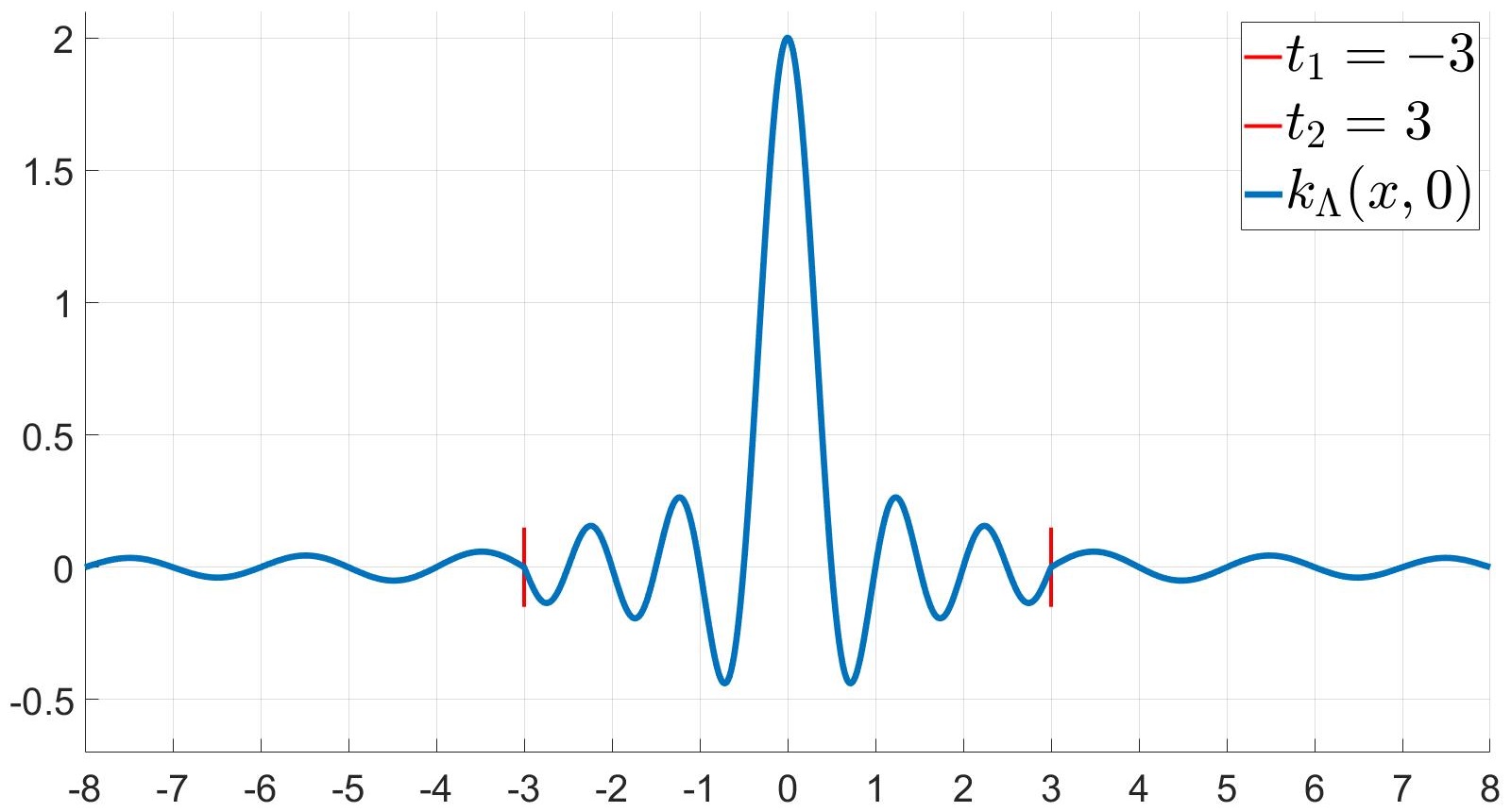} \caption{Graph of
			the reproducing kernel $k_{\Lambda}(0,\cdot)$ for the parameters $\Lambda=[0,\pi^{2}],p_{0}=1,p_{1}=\frac{1}{4},p_{2}=1$,
			$T=6$.} 
		\label{graphkerpar}
	\end{figure}
	
	A plot of the reproducing kernel is shown in Figure \ref{graphkerpar}.
	We also note the symmetry of the graph  respect to the
	$y$-axis. Note
	that the local bandwidth on the interval $[-3,3]$ is $1/\sqrt{p_1} =
	2$ which amounts to two zeros (two oscillations) per unit interval,
	whereas for $|x| \geq 3$ the local bandwidth is $1/\sqrt{p_0} =
	1/\sqrt{p_2} =1$, which amounts to one zero per unit interval. The
	kernel $k_\Lambda (0,\cdot )$ for $\pwo $  is the precise analogue of the
	$\sinc $-kernel $\tfrac{\sin x}{x}$ for the classical  Paley-Wiener
	space. 

	\section{A Density Theorem for Functions of Variable Bandwidth}
	
	\label{sec:dens} In this section, we derive necessary density conditions
	for sampling and interpolation in $PW_{\Lambda}(A_{p})$ with piecewise
	constant $p$ using a general theory about sampling in reproducing
	kernel Hilbert spaces from  \cite{fuehr}.
	
	\subsection{Sampling in reproducing kernel Hilbert spaces}

	We first summarize the
	definition of density and the necessary density conditions in
	reproducing kernel Hilbert spaces. For complete details see~\cite{fuehr}.

	For a Borel measure $\mu$ we define the the upper and lower Beurling
	densities of a separable set $X$ as the quantities 
	\begin{align*}
		D_{\mu}^{+}(X) & =\limsup_{r\to\infty}\sup_{x\in\bR}\dfrac{\#(X\cap B_{r}(x))}{\mu(B_{r}(x))},\\
		D_{\mu}^{-}(X) & =\liminf_{r\to\infty}\inf_{x\in\bR}\dfrac{\#(X\cap B_{r}(x))}{\mu(B_{r}(x))}
	\end{align*}

	The relevant measure for sampling in a reproducing kernel Hilbert
	space with kernel $k$ 
	is $d\mu\left(x\right)=k\left(x,x\right)dx$. 
	The dimension-free
	Beurling densities $D_{0}^{\pm}(X)$ are defined with respect to the
	measure $d\mu\left(x\right)=k\left(x,x\right)dx$:
	\[
	D_{0}^{+}(X)=\limsup_{r\to\infty}\sup_{x\in\bR}\dfrac{\#(X\cap B_{r}(x))}{\int_{B_{r}(x)}k(x,x)dx},\quad D_{0}^{-}(X)=\liminf_{r\to\infty}\inf_{x\in\bR}\dfrac{\#(X\cap B_{r}(x))}{\int_{B_{r}(x)}k(x,x)dx}
	\]

	To derive the desired density theorems in $PW_{\Lambda}(A_{p})$ with
	piecewise constant $p$, we will use the following special case of
	\cite[Thm.~2.2]{fuehr}. Note the crucial role of the reproducing
	kernel in the assumptions.  
	\begin{thm}
		\label{fuehrthm} %
		{} Assume $k$ is the reproducing kernel for a subspace $\mathcal{H}\subset L^{2}\left(\bR,dx\right)$
		and satisfies the following conditions:
		
		(i) \label{dee}Boundedness of diagonal: There exist constants
		$C_{1},C_{2}>0$ such that 
		\[
		C_{1}\leq k(x,x)\leq C_{2}
		\]
		for all $x\in\bR.$ 
		
		(ii) \label{ee} Weak localization property: For every $\epsilon>0$,
		there exists $r(\epsilon)>0$ such that 
		\[
		\sup_{x\in\bR}\int_{\bR\setminus B_{r(\epsilon)}(x)}|k(x,y)|^{2}\,dy<\epsilon^{2}.
		\]
		
		(iii) \label{ef} Homogeneous approximation property: Assume that a
		subset  $X\subseteq\bR$ satisfies a Bessel inequality 
		\begin{align*}
			\sum_{x\in X}|f(x)|^{2}\leq C\norm{f}_{\mathcal{H}}^{2}
		\end{align*}
		for all $f\in\mathcal{H}$. Then for every $\epsilon>0$ there exists
		$r(\epsilon)>0$ such that 
		\[
		\sup_{y\in\bR}\sum_{\underset{|x-y|>r(\epsilon)}{x\in X}}|k(x,y)|^{2}<\epsilon^{2}.
		\]
		
		Under these assumptions the following density conditions hold in $\cH
		$.
		
		(A) If $X$ is a set of stable sampling for $\mathcal{H}$, then $D_{0}^{-}(X)\geq1.$
		
		(B) If $X$ is a set of interpolation for $\mathcal{H}$, then $D_{0}^{+}(X)\leq1.$
	\end{thm}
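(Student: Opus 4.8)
The plan is to deduce the theorem from the abstract density theorem \cite[Thm.~2.2]{fuehr} for sampling and interpolation in reproducing kernel Hilbert spaces over a metric measure space. All of the conclusions (A) and (B) are the respective conclusions of that theorem once its hypotheses are verified in our concrete situation, so the work consists of matching hypotheses rather than producing a genuinely new argument.

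First I would fix the metric measure space to be $(\bR,d,dx)$ with $d(x,y)=|x-y|$ and $dx$ Lebesgue measure, so that $B_{r}(x)=(x-r,x+r)$, and check the three standing geometric assumptions required in \cite{fuehr}: (a) $d$ is measurable and every ball has finite measure, since $|B_{r}(x)|=2r<\infty$; (b) non-degeneracy of balls, since $\inf_{x\in\bR}|B_{r}(x)|=2r>0$ for each $r>0$; (c) the weak annular decay property, since
\[
\sup_{x\in\bR}\frac{|B_{r}(x)\setminus B_{r-1}(x)|}{|B_{r}(x)|}=\frac{2}{2r}=\frac{1}{r},
\]
which tends to $0$ as $r\to\infty$. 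All three are immediate for the real line with Lebesgue measure. Next, conditions (i)--(iii) of the statement are, verbatim, the hypotheses imposed on the reproducing kernel in \cite[Thm.~2.2]{fuehr} --- boundedness of the diagonal, weak localization, and the homogeneous approximation property --- specialized to $d\mu=dx$. Finally, the dimension-free Beurling densities $D_{0}^{\pm}$ of \cite{fuehr} are defined with respect to the measure $k(x,x)\,dx$, which is precisely how $D_{0}^{\pm}(X)$ is defined in the statement, and with this identification the critical constant in \cite{fuehr} is normalized to $1$. Invoking \cite[Thm.~2.2]{fuehr} then gives $D_{0}^{-}(X)\geq1$ for every set of stable sampling and $D_{0}^{+}(X)\leq1$ for every set of interpolation, which is exactly (A) and (B).

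For orientation I would also recall the mechanism behind \cite[Thm.~2.2]{fuehr}, although the bookkeeping is what makes it worth quoting rather than reproving. On a large ball $B=B_{r}(x_{0})$ one compares $\#(X\cap B)$ with $\trace(\chi_{B}P_{\mathcal{H}}\chi_{B})=\int_{B}k(y,y)\,dy$, where $P_{\mathcal{H}}$ is the orthogonal projection onto $\mathcal{H}$ and $\chi_{B}$ is restriction to $B$. For a set of stable sampling the frame property of $\{k_{x}:x\in X\}$, localized by the homogeneous approximation property, forces $\#(X\cap B')$ (with $B'$ a slightly enlarged ball) to be at least this trace up to an error coming from the boundary shell; for a set of interpolation a dual estimate using the Bessel bound in (iii) forces $\#(X\cap B)$ to be at most the trace up to a similar error. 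The hard part --- and the reason it is cleaner to cite \cite{fuehr} --- will be controlling these boundary errors uniformly in $x_{0}$ and showing they are negligible compared with $\int_{B_{r}(x_{0})}k(y,y)\,dy\asymp r$ (using the boundedness of the diagonal, the weak localization, and the weak annular decay checked above); dividing by $\int_{B_{r}(x_{0})}k(y,y)\,dy$ and letting $r\to\infty$ then yields the two density bounds.
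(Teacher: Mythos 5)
Your proposal matches the paper exactly: the paper does not prove this theorem but states it as a special case of \cite[Thm.~2.2]{fuehr}, and your verification that the geometric hypotheses are trivial for $(\bR,dx)$ and that (i)--(iii) are verbatim the kernel hypotheses of that result is precisely the (implicit) argument. Your sketch of the mechanism behind the cited theorem is a correct bonus but not required.
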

	
	Instead of using  the measure $k(x,x)\, dx $, this result can be written in terms of any measure $\mu$
	equivalent\footnote{in the sense that $d\mu = h\,dx$ for some measurable function $h$ with $0 < c \leq h(x) < C$ for all $x\in \bR$ and some constants $c$ and $C$.} to the Lebesgue measure. For this define the upper and
	lower averaged traces of $k$ with respect to $\mu$ as 
	\begin{align*}
		\mathrm{tr_{\mu}^{+}}= & \limsup_{r\to\infty}\sup_{x\in\bR}\dfrac{1}{\mu(B_{r}(x))}\int_{B_{r}(x)}k(y,y)\,dy,\\
		\mathrm{tr_{\mu}^{-}}= & \liminf_{r\to\infty}\inf_{x\in\bR}\dfrac{1}{\mu(B_{r}(x))}\int_{B_{r}(x)}k(y,y)\,dy.
	\end{align*}
	Then the following reformulation of Theorem~\ref{fuehrthm} holds~\cite[Lemma~6.6]{apde}. 
	\begin{lem}
		\label{lem:changemeas} Let $d\mu=h\,dx$ be equivalent to the Lebesgue
		measure. Then $D_{0}^{-}(X)\geq1$
		holds, if and only if $D_{\mu}^{-}\left(X\right)\geq\mathrm{tr_{\mu}^{-}}$.
		Similarly $D_{0}^{+}\left(X\right)\leq1$, if and only if $D_{\mu}^{+}\left(X\right)\leq\mathrm{tr_{\mu}^{+}}$.
	\end{lem}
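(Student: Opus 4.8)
The plan is to move both Beurling densities between the trace measure $d\nu(y)=k(y,y)\,dy$ that is built into $D_0^\pm$ and the given equivalent measure $\mu$. By the very definition of $D_0^\pm$ one has $D_0^\pm(X)=D_\nu^\pm(X)$, so the lemma is a comparison of $D_\nu^\pm(X)$ with $D_\mu^\pm(X)$. Since $k(x,x)$ is bounded above and below (the boundedness-of-diagonal hypothesis, condition~(i) of Theorem~\ref{fuehrthm}) and $\mu$ is equivalent to Lebesgue measure, the measures $\nu$ and $\mu$ are equivalent to each other; in particular $\nu(B_r(x))$ and $\mu(B_r(x))$ are finite and strictly positive for all $x\in\bR$ and $r>0$, and the quotient
\[
\varrho(r,x):=\frac{\nu(B_r(x))}{\mu(B_r(x))}=\frac{1}{\mu(B_r(x))}\int_{B_r(x)}k(y,y)\,dy
\]
is bounded between two positive constants, uniformly in $r$ and $x$. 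By definition $\mathrm{tr}_\mu^{-}=\liminf_{r\to\infty}\inf_{x}\varrho(r,x)$ and $\mathrm{tr}_\mu^{+}=\limsup_{r\to\infty}\sup_{x}\varrho(r,x)$, and everything will follow from the elementary factorization
\[
\frac{\#(X\cap B_r(x))}{\nu(B_r(x))}=\frac{\#(X\cap B_r(x))}{\mu(B_r(x))}\cdot\frac{1}{\varrho(r,x)}\, .
\]

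For the implication $D_0^-(X)\ge 1\Rightarrow D_\mu^-(X)\ge\mathrm{tr}_\mu^{-}$, fix $\varepsilon\in(0,\mathrm{tr}_\mu^-)$; by the definition of $\mathrm{tr}_\mu^{-}$ there is $r_0$ with $\varrho(r,x)\ge\mathrm{tr}_\mu^{-}-\varepsilon$ for all $r\ge r_0$ and all $x$, hence $\#(X\cap B_r(x))/\mu(B_r(x))\ge(\mathrm{tr}_\mu^{-}-\varepsilon)\,\#(X\cap B_r(x))/\nu(B_r(x))$; taking $\inf_x$ and then $\liminf_{r\to\infty}$ gives $D_\mu^-(X)\ge(\mathrm{tr}_\mu^{-}-\varepsilon)\,D_\nu^-(X)\ge\mathrm{tr}_\mu^{-}-\varepsilon$, and $\varepsilon\to 0$ finishes it. The implication $D_0^+(X)\le 1\Rightarrow D_\mu^+(X)\le\mathrm{tr}_\mu^{+}$ is the mirror image, using $\varrho(r,x)\le\mathrm{tr}_\mu^{+}+\varepsilon$ for $r$ large, then $\sup_x$ and $\limsup_{r\to\infty}$. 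These two implications — which are exactly the ones needed to deduce Theorem~\ref{densintro} from Theorem~\ref{fuehrthm} (combined with the value $\mathrm{tr}_{\mu_p}^{\pm}=\Omega^{1/2}/\pi$) — require nothing beyond the definitions.

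The main (mild) obstacle is the two converse implications. To run $D_\mu^-(X)\ge\mathrm{tr}_\mu^{-}\Rightarrow D_\nu^-(X)\ge 1$ one needs an \emph{upper} bound $\varrho(r,x)\le\mathrm{tr}_\mu^{-}+\varepsilon$ valid for all $x$ once $r$ is large, and similarly $D_\mu^+(X)\le\mathrm{tr}_\mu^{+}\Rightarrow D_\nu^+(X)\le 1$ needs a \emph{lower} bound $\varrho(r,x)\ge\mathrm{tr}_\mu^{+}-\varepsilon$. Both are available precisely when the upper and lower averaged traces coincide, $\mathrm{tr}_\mu^{+}=\mathrm{tr}_\mu^{-}=:\mathrm{tr}_\mu$, which is equivalent to $\varrho(r,\cdot)\to\mathrm{tr}_\mu$ uniformly in $x\in\bR$ as $r\to\infty$: then $(\mathrm{tr}_\mu-\varepsilon)\mu(B_r(x))\le\nu(B_r(x))\le(\mathrm{tr}_\mu+\varepsilon)\mu(B_r(x))$ for all $x$ once $r$ is large, and inserting these two-sided bounds into the factorization above, followed by $\varepsilon\to 0$, yields all four implications of the lemma. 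In the present setting $\mu=\mu_p$, and the required uniform convergence of $\varrho(r,\cdot)$, with limit $\mathrm{tr}_{\mu_p}=\Omega^{1/2}/\pi$, is exactly the content of Theorem~\ref{limmup}; its proof in turn rests on the semi-explicit description of the reproducing kernel obtained in Theorem~\ref{thmunifbdd}, which is where the structural results of Sections~\ref{sec:PW}--\ref{sec:ex} enter the density theorem.
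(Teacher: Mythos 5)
Your forward implications ($D_{0}^{-}(X)\geq 1\Rightarrow D_{\mu}^{-}(X)\geq\mathrm{tr}_{\mu}^{-}$ and its mirror image) are essentially the paper's own argument: unwind $D_{0}^{-}(X)\geq1$ into the pointwise inequality $\#(X\cap B_{r}(x))\geq(1-\varepsilon)\int_{B_{r}(x)}k(y,y)\,dy$ valid for all $x$ and all large $r$, divide by $\mu(B_{r}(x))$, and pass to $\inf_{x}$ and $\liminf_{r\to\infty}$; your factorization through $\varrho(r,x)=\nu(B_{r}(x))/\mu(B_{r}(x))$ is just a reorganization of the same computation. Where you genuinely differ is in the converses. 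The paper dismisses them with ``the converse is obtained by reading the argument backwards,'' but, as you correctly observe, the backward reading is not automatic: from $D_{\mu}^{-}(X)\geq\mathrm{tr}_{\mu}^{-}$ one only obtains the uniform lower bound $\#(X\cap B_{r}(x))/\mu(B_{r}(x))\geq\mathrm{tr}_{\mu}^{-}-\varepsilon$, and to convert this into $\#(X\cap B_{r}(x))\geq(1-\varepsilon')\nu(B_{r}(x))$ one needs the matching \emph{upper} bound $\varrho(r,x)\leq\mathrm{tr}_{\mu}^{-}+\varepsilon$ uniformly in $x$, which is exactly the statement $\mathrm{tr}_{\mu}^{+}=\mathrm{tr}_{\mu}^{-}$ (equivalently, uniform convergence of $\varrho(r,\cdot)$). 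A diagonal $k(y,y)$ taking two different constant values on two half-lines shows that without this the converse genuinely fails. Your proof supplies the missing ingredient from Theorem~\ref{limmup}, which in the application gives $\mathrm{tr}_{\mu_{p}}^{\pm}=|\Lambda^{1/2}|/\pi$ with a uniform rate. So your write-up is, if anything, more careful than the paper's; and since only the forward implications are actually used to deduce Theorem~\ref{thm:density} from Theorem~\ref{fuehrthm}, nothing downstream is affected.
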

	
	
	\begin{proof}
		The inequality $D_{0}^{-}(X)\geq 1$ means that for all $\varepsilon>0$
		there is an $r_{\epsilon}>0$ such that for all $x\in \bR $ and all  $r>r_{\varepsilon}$
		\begin{equation}
			\#\left(X\cap B_{r}\left(x\right)\right)\geq\left(1-\varepsilon\right)\int_{B_{r}\left(x\right)}k\left(y,y\right)dy\,,\label{eq:denseps}
		\end{equation}
		or equivalently, 
		\[
		\frac{\#\left(X\cap B_{r}\left(x\right)\right)}{\mu\left(B_{r}\left(x\right)\right)}\geq\left(1-\varepsilon\right)\frac{1}{\mu\left(B_{r}\left(x\right)\right)}\int_{B_{r}\left(x\right)}k\left(y,y\right)dy\,.
		\]
		Written in terms of the Beurling density, this is 
		\[
		D_{\mu}^{-}\left(X\right)\geq\liminf_{r\to\infty}\inf_{x\in\rd}\frac{\int_{B_{r}\left(x\right)}k\left(y,y\right)dy}{\mu(B_{r}(x))}=\mathrm{tr}_{\mu}^{-}(k)\,.
		\]
		The converse is obtained by reading the argument backwards. 
	\end{proof}
	
	\subsection{Necessary density conditions in $\pwo $}
	\label{sec:...}
	For the Paley-Wiener space $\pwo $ we define  the positive measure
	$\mu = \mu_{p}$ by  
	\begin{align*}
		\mu (I) = \mu_{p}(I)=\int_{I}\dfrac{dx}{\sqrt{p(x)}}\,.
	\end{align*}
	Since $p $ is positive and bounded away from zero, $\mu_{p}$ is equivalent
	to Lebesgue measure. 
	
	In the case $\mu=\mu_{p}$ we write $D_{p}^{\pm},\trace_{p}^{\pm}$ instead
	of $D_{\mu_{p}}^{\pm}\left(X\right)$ etc. Our aim is to prove the
	following theorem for the spectral set $\Lambda$ being an interval.  
	\begin{thm}[Density theorem in $PW_{[0,\Omega]}(A_{p})$]
		\label{thm:density}Let $\Lambda = [0,\Omega ]$ 
		and $p$ a piecewise constant function. 
		\begin{enumerate}
			\item If $X$ is a set of stable sampling for $PW_{\Lambda}(A_{p})$, then
			$D_{p}^{-}(X)\geq\frac{|\Lambda^{1/2}|}{\pi}.$ 
			\item If $X$ is a set of interpolation for $PW_{\Lambda}(A_{p})$, then
			$D_{p}^{+}(X)\leq\frac{|\Lambda^{1/2}|}{\pi}.$ 
		\end{enumerate}
	\end{thm}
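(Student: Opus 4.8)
The plan is to derive Theorem~\ref{thm:density} from the abstract density theorem for reproducing kernel Hilbert spaces, Theorem~\ref{fuehrthm}, combined with the change-of-measure principle of Lemma~\ref{lem:changemeas}, applied to $\cH=PW_{\Lambda}(A_{p})\subset\lr$ and the weight $d\mu=d\mu_{p}$. The argument then splits into two tasks: verifying that the reproducing kernel $k_{\Lambda}$ of Theorem~\ref{thmunifbdd} satisfies hypotheses (i)--(iii) of Theorem~\ref{fuehrthm}, and computing the averaged traces, i.e.\ showing $\mathrm{tr}_{p}^{-}=\mathrm{tr}_{p}^{+}=|\Lambda^{1/2}|/\pi$. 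Granting both, the theorem follows at once: if $X$ is a set of stable sampling, then $\{k_{\Lambda,x}\}_{x\in X}$ is a Bessel system, so (iii) is available for $X$, Theorem~\ref{fuehrthm}(A) gives $D_{0}^{-}(X)\geq1$, and Lemma~\ref{lem:changemeas} turns this into $D_{p}^{-}(X)\geq\mathrm{tr}_{p}^{-}=|\Lambda^{1/2}|/\pi$; if $X$ is a set of interpolation, then interpolating, for any two points $x,x'\in X$, the $\ell^{2}(X)$-sequence equal to $1$ at $x$, $-1$ at $x'$ and $0$ elsewhere, and using the uniform Lipschitz bound on $k_{\Lambda}$ established below, shows that $X$ is uniformly separated, hence Bessel, so again (iii) applies and Theorem~\ref{fuehrthm}(B) with Lemma~\ref{lem:changemeas} gives $D_{p}^{+}(X)\leq\mathrm{tr}_{p}^{+}=|\Lambda^{1/2}|/\pi$.

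For the first task I would work from the semi-explicit formula \eqref{kref}, $k_{\Lambda}(x,y)=\tfrac{1}{2\pi}\int_{\Lambda^{1/2}}\vartheta(u,x,y)/\kappa(u)\,du$, the uniform boundedness of the connection coefficients (Lemma~\ref{unifbdd}), and the two-sided bound $1/(q_{0}q_{n})\leq\kappa(u)\leq\|\kappa\|_{\infty}$. Boundedness of the diagonal from above and the uniform bound $\sup_{x,y}|\partial_{y}k_{\Lambda}(x,y)|<\infty$ follow by differentiating under the integral sign, using $|\partial_{y}\Phi^{\pm}(u^{2},y)|\leq q_{k}u(|a_{k}^{\pm}|+|b_{k}^{\pm}|)$ for $u\in\Lambda^{1/2}$. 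For weak localization, insert \eqref{pw1}--\eqref{pw2} into $\vartheta$ to obtain the expansion \eqref{eq:kerexpo}, $k_{\Lambda}(x,y)=\sum_{k}\alpha_{k}(x,y)J(\beta_{k}(x,y))$ with $\beta_{k}(x,y)=c_{k}^{(j,l)}\pm q_{j}x\pm q_{l}y$ for $x\in I_{j},y\in I_{l}$, where $J$ is bounded, continuous, and lies in $L^{2}(\bR)$ since $\chi_{\Lambda^{1/2}}/\kappa\in L^{1}\cap L^{2}$. For $r$ larger than every bounded interval length, $|x-y|>r$ forces every exponent $\beta_{k}(x,y)$ to have absolute value $\gtrsim r$, \emph{except} the ``same-sign'' exponents $\pm(q_{0}x+q_{n}y)$ in the block $I_{0}\times I_{n}$ (there $x\to-\infty$, $y\to+\infty$ can keep $q_{0}x+q_{n}y$ bounded); but exactly these carry the coefficient $\overline{b_{0}^{+}}/q_{0}+a_{n}^{-}/q_{n}$, which vanishes by the Wronskian identity \eqref{wron3}. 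Hence on $\{|y-x|>r\}$ the function $k_{\Lambda}(x,\cdot)$ is a finite sum of dilated and translated copies of $J$ restricted to $\{|s|>cr\}$, whose $L^{2}$-mass tends to $0$ uniformly in $x$; this is (ii). Property (iii) then follows from (i), (ii) and the Lipschitz bound by the standard reasoning of \cite{fuehr}: a Bessel family of reproducing kernels with a diagonal bounded above and below is relatively separated, and the tail sum $\sum_{|x-y|>r}|k_{\Lambda}(x,y)|^{2}$ is dominated by a multiple of the tail integral already controlled in (ii).

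For the second task the crucial algebraic input is the identity $\tfrac{|a_{k}^{+}(u^{2})|^{2}}{q_{0}}+\tfrac{|a_{k}^{-}(u^{2})|^{2}}{q_{n}}=q_{k}\,\kappa(u)$ for all $u>0$ and $0\leq k\leq n$; equivalently, the part of $\vartheta(u,y,y)$, $y\in I_{k}$, that is independent of $y$ equals $2q_{k}\kappa(u)$. For $k=0$ and $k=n$ this is immediate from \eqref{id1}, \eqref{hidentity} and Corollary~\ref{kappnew}, and in general it follows from taking determinants in the factorizations \eqref{ljprod}--\eqref{rlprod} together with \eqref{wron1}. Granting it, $\vartheta(u,y,y)=2q_{k}\kappa(u)+2\,\mathrm{Re}\big[(\tfrac{a_{k}^{+}\overline{b_{k}^{+}}}{q_{0}}+\tfrac{a_{k}^{-}\overline{b_{k}^{-}}}{q_{n}})e^{2iq_{k}uy}\big]$ for $y\in I_{k}$, so $k_{\Lambda}(y,y)=\tfrac{q_{k}|\Lambda^{1/2}|}{\pi}+R_{k}(y)$ with $R_{k}$ a finite sum of terms $\mathrm{const}\cdot J_{r}(2q_{k}y+\mathrm{const})$. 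Integrating over a ball $B_{r}(x)$, the $y$-independent part contributes $\tfrac{|\Lambda^{1/2}|}{\pi}\sum_{k}q_{k}|B_{r}(x)\cap I_{k}|=\tfrac{|\Lambda^{1/2}|}{\pi}\mu_{p}(B_{r}(x))$, while the oscillatory remainder contributes $O(\log r)$ uniformly in $x$ (split the $u$-integral at $u=1/r$: on $(0,1/r)$ bound $\int_{B_{r}(x)\cap I_{k}}e^{2iq_{k}uy}\,dy$ trivially by $|B_{r}(x)\cap I_{k}|$, on $(1/r,|\Lambda^{1/2}|)$ use $\int_{B_{r}(x)\cap I_{k}}e^{2iq_{k}uy}\,dy=O(1/u)$). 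Since $\mu_{p}(B_{r}(x))\geq c\,r$ uniformly, dividing and letting $r\to\infty$ gives $\tfrac{1}{\mu_{p}(B_{r}(x))}\int_{B_{r}(x)}k_{\Lambda}(y,y)\,dy\to|\Lambda^{1/2}|/\pi$ uniformly in $x$, i.e.\ $\mathrm{tr}_{p}^{\pm}=|\Lambda^{1/2}|/\pi$. The same identity supplies the missing lower bound $k_{\Lambda}(y,y)\geq C_{1}>0$ in (i): estimating the oscillatory part below by its negative modulus yields $k_{\Lambda}(y,y)\geq\tfrac{1}{2\pi}\int_{\Lambda^{1/2}}\kappa(u)^{-1}\big(\tfrac{(|a_{k}^{+}|-|b_{k}^{+}|)^{2}}{q_{0}}+\tfrac{(|a_{k}^{-}|-|b_{k}^{-}|)^{2}}{q_{n}}\big)\,du$, and the integrand is bounded below by a positive constant because $|a_{k}^{+}|^{2}-|b_{k}^{+}|^{2}=q_{k}/q_{n}$ and $|b_{k}^{-}|^{2}-|a_{k}^{-}|^{2}=q_{k}/q_{0}$ are positive constants (by \eqref{id1}) while $|a_{k}^{\pm}|+|b_{k}^{\pm}|$ is bounded above. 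The main obstacle is precisely this second task: pinning down the exact constant $|\Lambda^{1/2}|/\pi$ hinges on the identity $\tfrac{|a_{k}^{+}|^{2}}{q_{0}}+\tfrac{|a_{k}^{-}|^{2}}{q_{n}}=q_{k}\kappa$ and on a uniform-in-$x$ control of the oscillatory part of the diagonal; the cancellation \eqref{wron3} needed for weak localization is a second, smaller point where the Wronskian relations of Lemma~\ref{wronskthm} are indispensable.
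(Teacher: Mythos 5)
Your overall strategy coincides with the paper's: invoke the abstract density theorem for reproducing kernel Hilbert spaces (Theorem~\ref{fuehrthm}) together with the change of measure in Lemma~\ref{lem:changemeas}, verify the three kernel hypotheses, and identify the critical density as the averaged trace $|\Lambda^{1/2}|/\pi$. However, two steps do not hold as written.

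The ``crucial algebraic input'' you rely on, namely
$\tfrac{|a_{k}^{+}(u^{2})|^{2}}{q_{0}}+\tfrac{|a_{k}^{-}(u^{2})|^{2}}{q_{n}}=q_{k}\,\kappa(u)$ for \emph{all} $0\leq k\leq n$, is false for the intermediate indices $1\leq k\leq n-1$. In the three-interval example of Section~\ref{secthreeex} with $q_{0}=q_{2}=1$, $q_{1}=2$, the recursions \eqref{reccoefrj}--\eqref{reccoeflk} give $|a_{1}^{+}|^{2}=\tfrac14(1+\tfrac{q_{1}}{q_{2}})^{2}=\tfrac94$ and $|a_{1}^{-}|^{2}=\tfrac14(1-\tfrac{q_{1}}{q_{0}})^{2}=\tfrac14$, so the left-hand side is the constant $\tfrac52$, whereas $q_{1}\kappa(u)=q_{1}(C+K\cos 2q_{1}Tu)$ oscillates between $2$ and $\tfrac{50}{16}$ (and even its mean $q_{1}C=\tfrac{41}{16}$ differs from $\tfrac52$). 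Taking determinants in \eqref{ljprod}--\eqref{rlprod} only reproduces \eqref{id1} and cannot yield your identity. The identity is true precisely for $k=0$ and $k=n$ (via $a_{0}^{-}=0$, $b_{n}^{+}=0$ and Corollary~\ref{kappnew}), and that is all that is actually needed: the paper's Theorem~\ref{limmup} uses it only on the two unbounded intervals and absorbs the contribution of the bounded intervals $I_{1},\dots,I_{n-1}$ into an $O(\mu_{p}(I)^{-1})$ error, since those intervals have fixed finite length. Your conclusion $\mathrm{tr}_{p}^{\pm}=|\Lambda^{1/2}|/\pi$ survives for the same reason, but the step must be reorganized along these lines.

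The second issue is the homogeneous approximation property. Knowing that $J$ is bounded and lies in $L^{2}(\bR)$ controls the tail \emph{integral} $\int_{|x-y|>r}|k_{\Lambda}(x,y)|^{2}\,dy$, but passing to the tail \emph{sum} over a relatively separated set requires a pointwise off-diagonal bound; the phrase ``dominated by a multiple of the tail integral already controlled in (ii)'' is not a proof, and a uniform Lipschitz bound on $k_{\Lambda}$ does not bridge the gap, because the additive Lipschitz error in comparing $|k_{\Lambda}(x,y)|^{2}$ with a local average does not decay in $|x-y|$. The paper closes this by proving $|k_{\Lambda}(x,y)|\leq C(1+|x-y|)^{-1}$ (Lemma~\ref{kerestlem}), obtained from $J(s)=O(|s|^{-1})$ via integration by parts (using that $\kappa$ is smooth and bounded below on $\Lambda^{1/2}$) together with the cancellation of the $\pm(q_{0}x+q_{n}y)$ exponents through \eqref{wron3} --- a cancellation you correctly identified. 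With that pointwise decay both (ii) and (iii) follow at once, so your argument is repairable, but the decay estimate has to be made explicit.
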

	
	The result is similar to \cite[Thms.~6.2, 6.3]{cpam} but with notable
	differences in the proofs. In particular, since $p$ is not smooth, not
	even continuous, we cannot use a Liouville transform to change $A_{p}$ 
	into an equivalent  Schr\"odinger operator and apply scattering theory
	(as was done in ~\cite{cpam}).  
	
	The plan of the proof is to verify the conditions of Theorem \ref{fuehrthm}
	for the reproducing kernel of $PW(A_{p})$.
	We will work our way through a series of Lemmas that verify the conditions
	of Theorem~\ref{fuehrthm} and then 
	obtain  the value for the averaged trace of the kernel. 

	\subsection{Fine properties of the reproducing kernel of $\pwo $}
	We first verify that the properties of Theorem~\ref{fuehrthm} are
	satisfied for the reproducing kernel of $\pwo $. 
	
	\begin{lem}[Boundedness of the diagonal]
		\label{bdddiag} Let $\Lambda\subset\bR_{0}^{+}$ be  a set of finite measure,
		$p$ a piecewise constant function and $k_{\Lambda}$ the reproducing
		kernel for $PW_{\Lambda}(A_{p})$. Then there exist constants $C_{1},C_{2}>0$
		such that 
		\[
		C_{1}\leq k_{\Lambda}(x,x)\leq C_{2}
		\]
		for all $x\in\bR$. 
	\end{lem}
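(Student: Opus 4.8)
The plan is to read everything off the explicit diagonal of the kernel. Putting $x=y$ in \eqref{kref} and \eqref{eq:cc6} gives
\[
k_{\Lambda}(x,x)=\frac{1}{2\pi}\int_{\Lambda^{1/2}}\frac{\tfrac{1}{q_{0}}|\Phi^{+}(u^{2},x)|^{2}+\tfrac{1}{q_{n}}|\Phi^{-}(u^{2},x)|^{2}}{\kappa(u)}\,du=\|\Phi(\cdot,x)\|_{L^{2}(\Lambda,d\mu)}^{2}\ge0,
\]
so the whole content of the lemma is the uniform two-sided bound $0<C_{1}\le k_{\Lambda}(x,x)\le C_{2}$. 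The upper bound is already the estimate carried out in the proof of Theorem~\ref{thmunifbdd}: the uniform bound $\|\Phi(\cdot,x)\|_{L^{2}(\Lambda,d\mu)}\le C$ established there (it uses $\kappa(u)^{-1}\le q_{0}q_{n}$ from Corollary~\ref{kappnew}, the uniform boundedness of $\Phi^{\pm}$ from Lemma~\ref{unifbdd}, and $|\Lambda^{1/2}|<\infty$) gives $C_{2}=C^{2}$.

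For the lower bound I plan to show that the integrand is bounded below by a positive constant independent of $x$ and $u>0$; since $\Lambda$ has positive measure (if $|\Lambda|=0$ then $PW_{\Lambda}(A_{p})=\{0\}$ and there is nothing to prove) and $\kappa$ is bounded above, $\kappa(u)\le M<\infty$ (it is an almost periodic trigonometric polynomial by Remark~\ref{bddrmk}; equivalently $\kappa=|a_{0}^{+}|^{2}/q_{0}^{2}$ with $a_{0}^{+}$ uniformly bounded), integration then finishes the argument. Fix $x\in I_{k}$. If $k=0$, then $\Phi^{-}(u^{2},x)=e^{-iq_{0}ux}$ has modulus $1$, so $\vartheta(u,x,x)\ge 1/q_{n}$; symmetrically, if $k=n$, then $\Phi^{+}(u^{2},x)=e^{iq_{n}ux}$ and $\vartheta(u,x,x)\ge 1/q_{0}$.

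The interior indices $1\le k\le n-1$ are the heart of the matter, where the point is that $\Phi^{+}$ and $\Phi^{-}$ cannot both be small at the same point. Substituting $\Phi^{\pm}(u^{2},x)=a_{k}^{\pm}(u^{2})e^{iq_{k}ux}+b_{k}^{\pm}(u^{2})e^{-iq_{k}ux}$ and cancelling the $e^{iq_{k}ux}$ terms yields
\[
a_{k}^{-}(u^{2})\,\Phi^{+}(u^{2},x)-a_{k}^{+}(u^{2})\,\Phi^{-}(u^{2},x)=\bigl(a_{k}^{-}(u^{2})b_{k}^{+}(u^{2})-a_{k}^{+}(u^{2})b_{k}^{-}(u^{2})\bigr)e^{-iq_{k}ux},
\]
and the modulus of the right-hand side equals $\tfrac{q_{k}}{q_{0}}|a_{0}^{+}(u^{2})|=q_{k}\sqrt{\kappa(u)}\ge q_{k}/\sqrt{q_{0}q_{n}}$ by the Wronskian identity \eqref{wron1} and the definition and lower bound of $\kappa$ in \eqref{eq:cc5}. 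Bounding the left-hand side by Cauchy--Schwarz together with the uniform estimate $|a_{k}^{-}(u^{2})|^{2}+|a_{k}^{+}(u^{2})|^{2}\le B^{2}$ (valid since the connection coefficients are almost periodic trigonometric polynomials, Theorem~\ref{lemsol}) gives $|\Phi^{+}(u^{2},x)|^{2}+|\Phi^{-}(u^{2},x)|^{2}\ge q_{k}^{2}/(B^{2}q_{0}q_{n})$, hence $\vartheta(u,x,x)\ge q_{k}^{2}/\bigl(B^{2}q_{0}q_{n}\max(q_{0},q_{n})\bigr)$. Taking the minimum $c_{0}>0$ of the finitely many positive constants obtained in these three cases yields $\vartheta(u,x,x)\ge c_{0}$ for all $x\in\bR$ and $u>0$, and therefore $k_{\Lambda}(x,x)\ge c_{0}|\Lambda^{1/2}|/(2\pi M)=:C_{1}>0$.

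The only non-routine step is this interior case, i.e.\ excluding that $\Phi^{+}$ and $\Phi^{-}$ vanish to leading order at the same $(u,x)$; this is exactly where the non-degeneracy of the fundamental pair is needed, entering through the Wronskian relation \eqref{wron1} and the strict lower bound $|a_{0}^{+}(u^{2})|\ge\sqrt{q_{0}/q_{n}}$ that $\kappa\ge(q_{0}q_{n})^{-1}$ provides. Everything else is bookkeeping with the uniform bounds already established.
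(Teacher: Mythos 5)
Your proof is correct, and the upper bound coincides with the paper's (both just reuse the estimate from Theorem~\ref{thmunifbdd}). For the lower bound, however, you take a genuinely different route. The paper simply discards the $\Phi^-$ term, applies the reverse triangle inequality $|\Phi^{+}(u^{2},x)|\ge\bigl||a_{j}^{+}(u^{2})|-|b_{j}^{+}(u^{2})|\bigr|$ on each $I_j$, and then invokes \eqref{id1}, i.e.\ $|a_{j}^{+}|^{2}-|b_{j}^{+}|^{2}=q_{j}/q_{n}>0$, which together with the uniform upper bound on $|a_{j}^{+}|+|b_{j}^{+}|$ forces $\bigl||a_{j}^{+}|-|b_{j}^{+}|\bigr|$ to be bounded below by a positive constant uniformly in $u$ --- no case distinction between boundary and interior intervals is needed, and $\Phi^-$ never enters. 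You instead keep both terms and prove the stronger pointwise non-degeneracy statement that $\Phi^{+}$ and $\Phi^{-}$ cannot simultaneously be small, deriving it from the Wronskian identity \eqref{wron1} plus Cauchy--Schwarz; your boundary cases $k=0,n$ are then handled separately by the explicit pure exponentials. Both arguments rest on the same inputs (uniform boundedness of the connection coefficients, $(q_0q_n)^{-1}\le\kappa\le M$, $|\Lambda^{1/2}|\in(0,\infty)$), and your constants are all legitimately positive; your version is somewhat longer but yields the extra information that $|\Phi^{+}(u^{2},x)|^{2}+|\Phi^{-}(u^{2},x)|^{2}$ is uniformly bounded below, whereas the paper's exploits the specific identity \eqref{id1} to get by with $\Phi^{+}$ alone.
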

	
	\begin{proof}
		The uniform boundedness of $k_{\Lambda}$ (hence the existence of
		$C_{2}$) follows from the uniform boundedness of the fundamental
		solutions of $A_p$ by Lemma
		\ref{unifbdd} and the formula for the  reproducing kernel $k_{\Lambda}$ in Theorem
		\ref{thmunifbdd}. On the other hand, 
		\begin{align*}
			k_{\Lambda}(x,x) & =\dfrac{1}{2\pi}\int_{\Lambda^{1/2}}\dfrac{\frac{1}{q_{0}}|\Phi^{+}(u^{2},x)|^{2}+\frac{1}{q_{n}}|\Phi^{-}(u^{2},x)|^{2}}{\kappa(u)}\,du\\
			& \geq\dfrac{1}{2\pi q_{0}}\int_{\Lambda^{1/2}}\dfrac{|\Phi^{+}(u^{2},x)|^{2}}{\kappa(u)}\,du\\
			& \geq\min_{0\leq j\leq n}\inf_{x\in I_{j}}\dfrac{1}{2\pi q_{0}}\int_{\Lambda^{1/2}}\dfrac{|a_{j}^{+}(u^{2})e^{iq_{j}xu}+b_{j}^{+}(u^{2})e^{-iq_{j}xu}|^{2}}{\kappa(u)}\,du\\
			& \geq\min_{0\leq j\leq n}\dfrac{1}{2\pi q_{0}}\int_{\Lambda^{1/2}}\dfrac{\left||a_{j}^{+}(u^{2})|-|b_{j}^{+}(u^{2})|\right|^{2}}{\kappa(u)}\,du=C_{1}.
		\end{align*}
		Since $|a_{j}^{+}(u^{2})|-|b_{j}^{+}(u^{2})|=\frac{q_{j}}{q_{n}}$
		by \eqref{id1} and  $\kappa$ is bounded, 
		we conclude that $C_{1}>0$. 
	\end{proof}
	Next we show that the kernel $k_{\Lambda}$ exhibits off-diagonal
	decay. Here we need that $\Lambda$ is an interval. 
	\begin{lem}
		\label{kerestlem} Let $\Lambda=[0,\Omega]$ for some $\Omega>0$,
		$p$ a piecewise constant function and $k_{\Lambda}$ the reproducing
		kernel for $PW_{[0,\Omega]}(A_{p})$. Then for some constant $C>0$
		\begin{align}
			|k_{\Lambda}(x,y)|\leq\dfrac{C}{1+|x-y|}\qquad \text{ for all } x,y\in
			\bR \, .\label{kineq}
		\end{align}
	\end{lem}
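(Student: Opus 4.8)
The plan is to read off the decay from the formula $k_\Lambda(x,y)=\sum_k \alpha_k(x,y)\,J(\beta_k(x,y))$ of \eqref{eq:kerexpo}, in two steps: first establish the pointwise bound $|J(s)|\le C/(1+|s|)$ for the special function $J$, and then check that on each piece $I_j\times I_l$ the relevant exponents $\beta_k(x,y)=c_k^{(j,l)}\pm q_j x\pm q_l y$ of \eqref{eq:expo} control $|x-y|$. Throughout, $m=m(x,y)$ stays bounded because the number of exponential terms in $\vartheta$ depends only on the indices $j,l$.

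\emph{Decay of $J$.} By Remark~\ref{bddrmk}, $\kappa(u)=c_0+\sum_{j=1}^r c_j\cos(\lambda_j u)$ is a finite cosine sum with real coefficients, and by \eqref{eq:cc5} together with Corollary~\ref{kappnew} it satisfies $\kappa(u)\ge 1/(q_0q_n)>0$. Hence $1/\kappa\in C^\infty(\Lambda^{1/2})$ with $\kappa$ and $(1/\kappa)'$ uniformly bounded on the compact interval $\Lambda^{1/2}=[0,\Omega^{1/2}]$. For $|s|\le 1$ the trivial estimate $|J(s)|\le\frac{1}{2\pi}\,|\Lambda^{1/2}|\,\max_u 1/\kappa(u)$ suffices; for $|s|\ge 1$ one integration by parts in $J(s)=\frac{1}{2\pi}\int_0^{\Omega^{1/2}}e^{isu}/\kappa(u)\,du$ produces a boundary term of size $O(|s|^{-1})$ and the integral $-\frac{1}{2\pi is}\int_0^{\Omega^{1/2}}e^{isu}(1/\kappa)'(u)\,du$ of the same size. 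Combining the two regimes gives $|J(s)|\le C/(1+|s|)$ with $C$ depending only on $\Omega$ and the parameters of $p$.

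\emph{The unproblematic pairs.} Fix $x\in I_j$, $y\in I_l$. Since $k_\Lambda$ is uniformly bounded, say $|k_\Lambda|\le C_2$, by Lemma~\ref{unifbdd} and \eqref{kref}, the inequality is immediate whenever $|x-y|$ stays bounded on $I_j\times I_l$; thus only the pairs with $0\in\{j,l\}$ or $n\in\{j,l\}$ need work. A short inspection of the four sign patterns in \eqref{eq:expo} shows that for every such pair \emph{except} $\{j,l\}=\{0,n\}$ one has $|\beta_k(x,y)|\ge c_{jl}|x-y|-C_{jl}$ with $c_{jl}>0$: either exactly one of $x,y$ is confined to a bounded interval, so $|\beta_k|\ge q'|x|-C$ for the appropriate local parameter $q'\in\{q_0,q_n\}$ and $|x|\ge|x-y|-C$; or $j=l\in\{0,n\}$, in which case $x,y$ lie on the same side of a breakpoint $t$ and $|x+y-2t|=|x-t|+|y-t|\ge|x-y|$ absorbs the offending $\pm q(x+y)$ exponents. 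Each summand then obeys $|\alpha_k J(\beta_k)|\le C/(1+|x-y|)$ — for $|x-y|$ large from Step~1, for $|x-y|$ bounded from $|k_\Lambda|\le C_2$ — and summing the finitely many terms finishes these cases.

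\emph{The case $\{j,l\}=\{0,n\}$, which I expect to be the main obstacle.} Here $x+y$ may stay bounded while $|x-y|\to\infty$, so the exponent $q_0x+q_ny$ arising in $\vartheta$ gives no decay on its own. By $k_\Lambda(x,y)=\overline{k_\Lambda(y,x)}$ it is enough to treat $x\in I_0$, $y\in I_n$. Substituting $\Phi^+(u^2,x)=a_0^+e^{iq_0xu}+b_0^+e^{-iq_0xu}$, $\Phi^-(u^2,x)=e^{-iq_0xu}$, $\Phi^+(u^2,y)=e^{iq_nyu}$, $\Phi^-(u^2,y)=a_n^-e^{iq_nyu}+b_n^-e^{-iq_nyu}$ into \eqref{eq:cc6}, the coefficient of $e^{i(q_0x+q_ny)u}$ in $\vartheta(u,x,y)$ equals $\tfrac{1}{q_0}\overline{b_0^+(u^2)}+\tfrac{1}{q_n}a_n^-(u^2)$, which vanishes identically by the Wronskian identity \eqref{wron3}. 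Hence
\[
\vartheta(u,x,y)=\tfrac{1}{q_0}\,\overline{a_0^+(u^2)}\,e^{i(q_ny-q_0x)u}+\tfrac{1}{q_n}\,b_n^-(u^2)\,e^{i(q_0x-q_ny)u},
\]
and since $a_0^+,b_n^-$ are almost periodic trigonometric polynomials in $u$ with real coefficients, every exponent occurring here has the form $\mu\pm(q_0x-q_ny)$ with $\mu$ ranging over a finite set. Because $x<t_1\le t_n\le y$, one gets $q_ny-q_0x\ge\min(q_0,q_n)\,|x-y|-C_0$, so $|\beta_k(x,y)|\ge\min(q_0,q_n)\,|x-y|-C_1$, and Step~1 delivers the bound $C/(1+|x-y|)$ for each summand; this completes the proof. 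The crux is precisely the cancellation: without \eqref{wron3}, the surviving $e^{i(q_0x+q_ny)u}$ term would kill the decay along the diagonal $q_0x=-q_ny$, so identifying that this term drops out is the essential point.
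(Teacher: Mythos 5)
Your proof is correct and follows essentially the same route as the paper's: integration by parts yields $|J(s)|\lesssim (1+|s|)^{-1}$, the exponents $\beta_k(x,y)$ control $|x-y|$ in every case except $x\in I_0,\ y\in I_n$, and in that cross case the coefficient of $e^{i(q_0x+q_ny)u}$ in $\vartheta$ cancels via the Wronskian identity \eqref{wron3} — which is exactly the cancellation the paper exploits. No gaps.
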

	
	\begin{proof}
	Let $p$ be a piecewise constant function for some $n\in\bN$. Recall
	the expression \eqref{eq:kerexpo} for the reproducing kernel
	\begin{align*}
		k_{\Lambda}(x,y)=\sum_{k=1}^{m(x,y)}\alpha_{k}(x,y)J(\beta_{k}(x,y)),\quad m(x,y)\in\bN,\alpha_{k}(x,y),\beta_{k}(x,y)\in\bR,1\leq k\leq m(x,y)
	\end{align*}
	with $J(s)=\frac{1}{2\pi}\int_0^{\Omega ^{1/2}}\frac{e^{isu}}{\kappa(u)}\,du$
	and $\beta_{k}(x,y)=c_{k}^{(jl)}\pm q_{j}x\pm q_{l}y$ for
	$x\in I_j,y\in I_l$. 

        In a first step we  show that $J(s)=\mathcal{O}(s^{-1})$ for $\left|s\right|$ large enough.
	Indeed, as $\kappa$
	is (infinitely) differentiable on $(0,\infty)$ with bounded derivatives
	and $0<\frac{1}{\kappa(u)}\leq q_{0}q_{n}$ for all $u\in(0,\infty)$,
	integration by parts yields 
	\begin{align}
		J(s) & =\dfrac{1}{2\pi is}\left(\dfrac{e^{is\Omega^{1/2}}}{\kappa(\Omega^{1/2})}-\dfrac{1}{\kappa(0^{+})}\right)+\dfrac{1}{2\pi is}\int_{0}^{\Omega^{1/2}}\frac{\kappa'(u)}{\kappa(u)^{2}}e^{isu}\label{jbp}\\
		|J(s)| & \leq\dfrac{1}{2\pi|s|}\left\{ \dfrac{1}{|\kappa(\Omega^{1/2})|}+\dfrac{1}{|\kappa(0^{+})|}+\int_{0}^{\Omega^{1/2}}\dfrac{\left|\kappa'(u)\right|}{\kappa(u)^{2}}\,du\right\} ,\nonumber 
	\end{align}
	where $\kappa(0^{+})=\lim_{u\downarrow0}\kappa(u)$. Therefore for
	all $s\neq0$, 
	\begin{align*}
		|J(s)|\leq\frac{q_{0}q_{n}}{2\pi|s|}\left(2+q_{0}q_{n}\Omega^{1/2}\sup
		_{0\leq u \leq \Omega ^{1/2}} |\kappa'(u)|\right).
	\end{align*}
	%
In the second step, we  verify that for all $k$ there exist for some
$N_{k},r_{k}>0$, such that 
	\begin{equation}
			|\beta_{k}(x,y)|\geq{N_{k}}(1+|x-y|)\label{eq:bk}
	\end{equation}
	 for all $x,y\in\bR$ satisfying $|x-y|>r_{k}$. We proceed as follows: let $a>0$ such that the knots $\{t_{k}\}_{k=1}^{n}$
	of $p$ are contained in $[-a,a]$. Consider cases where $x$ and
	$y$ take values on the intervals 
	\[
	(-\infty,-a)\subset I_{0},\quad[-a,a]\supset I_{1}\cup\ldots I_{n-1},\quad(a,\infty)\subset I_{n}
	\]
	with $|x-y|$ large. 
	
	\begin{itemize}[leftmargin=*]
		 
	\item Suppose $x$ and $y$ belong to the same unbounded interval, say without
	loss of generality $x,y<-a$ . Then 
	$
	\beta_{k}(x,y)=c_{k}\pm q_{0}(x\pm y)
	$.
	As  $|x\pm y|\geq|x-y|$ we obtain \eqref{eq:bk}
	for $|x-y|$ sufficiently large, i.e., $|x-y|\geq r_{k}$. 
	
	\item Likewise the case $|x|\geq a$ and $|y|\leq a$ implies
          \eqref{eq:bk}. 
	
	\item The remaining case is $x<-a$
	and $y>a$, or by symmetry $x>a$ and $y<-a$. Then
        $|q_{0}x+q_{n}y|$ may be bounded, although  $|x-y| \to
        \infty$, and (\ref{eq:bk}) is violated. Here we use the 
        original formulation of $k_{\Lambda}$.  We have  
	\begin{align*}
		\vartheta(u,x,y) & =\frac{1}{q_{0}}\overline{\Phi^{+}(u^{2},x)}\Phi^{+}(u^{2},y)
            +\frac{1}{q_{n}}\overline{\Phi^{-}(u^{2},x)}\Phi^{-}(u^{2},y)\\ 
          & =\frac{1}{q_0} \Big( \overline{a_0^+(u^2)}  e^{-iq_0x}+\overline{b_0^+(u^2)}  e^{iq_0x}\Big)
e^{iq_nyu}+\frac{1}{q_n} e^{iq_0 xu} \Big( a_n^-(u^2)  e^{iq_nyu}+b_n^-(u^2) e^{-iq_nyu}  \Big)\\
    &=\Big( \frac{\overline{b_0^+(u^2)}}{q_0} + \frac{a_n^-(u^2)}{q_n}
      \Big) e^{i(q_0x+q_ny)u} +
      \frac{\overline{a_{0}^{+}(u^{2})}}{q_{0}}e^{-i(q_{0}x-q_{n}y)u}+\frac{a_{0}^{+}(u^{2})}{q_{0}}e^{i(q_{0}x-q_{n}y)u}              \\
    & =\frac{\overline{a_{0}^{+}(u^{2})}}{q_{0}}e^{-i(q_{0}x-q_{n}y)u}+\frac{a_{0}^{+}(u^{2})}{q_{0}}e^{i(q_{0}x-q_{n}y)u},
	\end{align*}
        where we have used \eqref{wron1} and \eqref{wron3} of Lemma~\ref{wronskthm}.
	This implies that there are no exponentials containing $\pm(q_0x +
        q_ny)$. Meanwhile, observe that
        $$
        |q_{0}x-q_{n}y|=q_{n}y -q_{0}x \geq \min (q_0,q_n)  (y-x)
        $$
   and      \eqref{eq:bk} follows.
\end{itemize}
To summarize, the decay estimate on $J$ and \eqref{eq:bk} imply that
there exists a  $C>0$ such that 
\[
|k_{\Lambda}(x,y)|\leq\frac{C}{1+|x-y|}
\]
for all $x,y\in \bR$. 
\end{proof}

	\begin{lem}
		[Weak localization] \label{WLemma} Let $\Lambda=[0,\Omega]$ for
		some $\Omega>0$, $p$ be a piecewise constant function and $k_{\Lambda}$
		the reproducing kernel for $PW_{[0,\Omega]}(A_{p})$. Then for every
		$\epsilon>0$, there exists $r(\epsilon)>0$ such that 
		\begin{equation}
			\sup_{x\in\bR}\int_{|x-y|>r(\epsilon)}|k_{\Lambda}(x,y)|^{2}\,dy<\epsilon^{2}.\label{WL}
		\end{equation}
	\end{lem}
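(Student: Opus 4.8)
The plan is to obtain the weak localization property as an immediate consequence of the off-diagonal decay estimate established in Lemma~\ref{kerestlem}. By that lemma there is a constant $C>0$, independent of $x$ and $y$, such that $|k_{\Lambda}(x,y)|\le C/(1+|x-y|)$ for all $x,y\in\bR$. Substituting $t=y-x$ and using this pointwise bound gives, for every $x\in\bR$ and every $r>0$,
\[
\int_{|x-y|>r}|k_{\Lambda}(x,y)|^{2}\,dy \;\le\; C^{2}\int_{|t|>r}\frac{dt}{(1+|t|)^{2}} \;=\; 2C^{2}\int_{r}^{\infty}\frac{dt}{(1+t)^{2}} \;=\; \frac{2C^{2}}{1+r}.
\]

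Since the right-hand side is independent of $x$ and decreases to $0$ as $r\to\infty$, for a given $\epsilon>0$ it then suffices to pick $r(\epsilon)$ so large that $2C^{2}/(1+r(\epsilon))<\epsilon^{2}$ — for instance $r(\epsilon)=2C^{2}/\epsilon^{2}$ — whence
\[
\sup_{x\in\bR}\int_{|x-y|>r(\epsilon)}|k_{\Lambda}(x,y)|^{2}\,dy\;\le\;\frac{2C^{2}}{1+r(\epsilon)}\;<\;\epsilon^{2},
\]
which is exactly \eqref{WL}. The use of $\Lambda=[0,\Omega]$ being an interval enters only through Lemma~\ref{kerestlem}, which already requires it.

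The only point requiring attention is that the constant $C$ in Lemma~\ref{kerestlem} is genuinely uniform in both arguments; this is what makes the tail integral bound independent of $x$ and so permits taking the supremum. That uniformity is already part of the statement of Lemma~\ref{kerestlem}, so there is no real obstacle here — the substantive work (the decomposition of $k_{\Lambda}$ in terms of $\vartheta$, the $\mathcal{O}(s^{-1})$ decay of $J$ via integration by parts, and the lower bounds $|\beta_{k}(x,y)|\gtrsim 1+|x-y|$ in the various interval configurations, including the delicate case $x<-a$, $y>a$ handled through \eqref{wron1} and \eqref{wron3}) has all been carried out in the proof of Lemma~\ref{kerestlem}.
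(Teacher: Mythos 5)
Your argument is correct and is essentially identical to the paper's own proof: both invoke the uniform off-diagonal bound $|k_{\Lambda}(x,y)|\le C/(1+|x-y|)$ from Lemma~\ref{kerestlem}, integrate the tail to get $2C^{2}/(1+r)$ uniformly in $x$, and choose $r(\epsilon)$ accordingly. No further comment is needed.
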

	
	\begin{proof}
		By Lemma \ref{kerestlem}, we can find $C>0$ such that for all
		$x,y\in\bR$ 
		\begin{align*}
			|k_{\Lambda}(x,y)|\leq\dfrac{C}{1+|x-y|}.
		\end{align*}
		Let $\epsilon>0$ and take $r(\epsilon)>\max\{b,\frac{4C^{2}}{\epsilon^{2}}-1\}$.
		Then for a fixed $x\in\bR$, 
		\begin{align*}
			\int_{|x-y|>r(\epsilon)}|k_{\Lambda}(x,y)|^{2}\,dy & \leq\int_{|x-y|>r(\epsilon)}\dfrac{C^{2}}{(1+|x-y|)^{2}}\,dy=\int_{r(\epsilon)}^{\infty}\dfrac{2C^{2}}{(1+z)^{2}}\,dz\\
			& =\frac{2C^{2}}{1+r(\epsilon)}<\frac{\epsilon^{2}}{2}.
		\end{align*}
		Taking the supremum over all $x\in\bR$ proves \eqref{WL}. 
	\end{proof}
	For the proof of the homogeneous approximation property, we recall
	that a set $X\subset\bR$ is 
        \emph{relatively separated} if 
	\begin{align}
		\text{rel}(X)=\max_{x\in\bR}\#(X\cap[x,x+1])\label{rel}
	\end{align}
	is  finite. Lemma 3.7 in \cite{fuehr} implies that if $\{k(x,\cdot):x\in X\}$
	is a Bessel sequence for $PW_{\Lambda}\left(\bR\right)$ then $X$
	is relatively separated. %

	\begin{lem}
		[Homogeneous approximation property] \label{HAP} Let $\Lambda=[0,\Omega]$
		for some $\Omega>0$, $p$ a piecewise constant function and $k_{\Lambda}$
		the reproducing kernel for $PW_{[0,\Omega]}(A_{p})$. Suppose $X\subset\bR$
		such that $\{k_{\Lambda}(x,\cdot):x\in X\}$ is a Bessel sequence
		in $PW_{[0,\Omega]}(A_{p})$. Then for every $\epsilon>0$, there
		exists $r(\epsilon)>0$ such that 
		\begin{equation}
			\sup_{y\in\bR}\sum_{\underset{|x-y|>r(\epsilon)}{x\in X}}|k_{\Lambda}(x,y)|^{2}<\epsilon^{2}.\label{HAPconc}
		\end{equation}
	\end{lem}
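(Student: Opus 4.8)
The plan is to deduce the homogeneous approximation property directly from the off-diagonal decay of $k_{\Lambda}$ obtained in Lemma~\ref{kerestlem}, combined with the relative separation of $X$. First I would invoke \cite[Lemma~3.7]{fuehr}: since $\{k_{\Lambda}(x,\cdot):x\in X\}$ is a Bessel sequence in $PW_{[0,\Omega]}(A_{p})$, the set $X$ is relatively separated, so $N:=\mathrm{rel}(X)<\infty$ in the sense of \eqref{rel}. This is the only place where the Bessel hypothesis is used; everything else is geometric.

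Next, Lemma~\ref{kerestlem} gives a constant $C>0$ with $|k_{\Lambda}(x,y)|\le C/(1+|x-y|)$ for all $x,y\in\bR$, hence
$$
\sum_{\substack{x\in X\\|x-y|>r}}|k_{\Lambda}(x,y)|^{2}\le C^{2}\sum_{\substack{x\in X\\|x-y|>r}}\frac{1}{(1+|x-y|)^{2}}\, .
$$
To bound the right-hand side uniformly in $y$, I would decompose $\bR$ into the unit intervals $[y+j,y+j+1)$, $j\in\bZ$. Each such interval contains at most $N$ points of $X$, and any point $x$ in it satisfies $1+|x-y|\ge 1+j$ if $j\ge0$ and $1+|x-y|>|j|$ if $j\le -1$; moreover $|x-y|>r$ forces $|j|\ge \lfloor r\rfloor$. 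Therefore
$$
\sum_{\substack{x\in X\\|x-y|>r}}\frac{1}{(1+|x-y|)^{2}}\le 2N\sum_{j\ge \lfloor r\rfloor}\frac{1}{j^{2}}\, ,
$$
a bound that does not depend on $y$. Since $\sum_{j\ge m}j^{-2}\to0$ as $m\to\infty$, given $\epsilon>0$ I can choose $r(\epsilon)$ so large that $2C^{2}N\sum_{j\ge \lfloor r(\epsilon)\rfloor}j^{-2}<\epsilon^{2}$, and then taking the supremum over $y\in\bR$ yields \eqref{HAPconc}.

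I do not expect a genuine obstacle here. The quadratic decay from Lemma~\ref{kerestlem} is already absolutely summable against a relatively separated set, so the statement reduces to controlling the tail of a convergent series uniformly in the base point $y$; the unit-interval decomposition does exactly that. The only point requiring a little care is the index bookkeeping near $j=0$, but the finitely many terms with $|x-y|\le r(\epsilon)$ are simply omitted and play no role in the estimate.
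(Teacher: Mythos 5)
Your proposal is correct and follows essentially the same route as the paper: the paper likewise invokes \cite[Lemma~3.7]{fuehr} to get relative separation of $X$ from the Bessel hypothesis, applies the decay bound of Lemma~\ref{kerestlem}, and then asserts that the resulting tail sum can be made small uniformly in $y$. You have merely written out the unit-interval counting argument that the paper leaves implicit, and that bookkeeping is sound.
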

	
	\begin{proof}
		Fix $y\in\bR$. By Lemma \ref{kerestlem}, there exists $r,C>0$
		such that 
		\begin{align*}
			\sum_{\underset{|x-y|>r}{x\in X}}|k_{\Lambda}(x,y)|^{2}\leq\sum_{\underset{|x-y|>r}{x\in X}}\dfrac{C^{2}}{1+|x-y|^{2}}.
		\end{align*}
		As $X$ is relatively separated, the right hand side
                can be made  smaller than a given $\epsilon>0$ for $r$ large enough.%
	\end{proof}

	Lemmas~\ref{WLemma} and \ref{HAP} hold for arbitrary spectral sets $\Lambda $
	of finite measure, although the off-diagonal decay of $k_\Lambda $ in
	\eqref{kineq}  no
	longer holds.  
	See~\cite[Lems.~C.1.2, C.2.3]{CelizDissertation} for the treatment of the general case. 

	{} %

	\subsection{The averaged trace of the reproducing kernel}
	Next we compute the averaged trace of $k$ required for the abstract
	density theorem (Lemma~\ref{lem:changemeas}). For these arguments
	$\Lambda $ may be an arbitrary set of finite measure. 
	
	To rewrite the diagonal of the reproducing kernel, we 
	define  the auxiliary functions 
	\begin{align*}
		h_{j}^{(1)}(u) & =\frac{|a_{j}^{+}(u^{2})|^{2}}{q_{0}}+\frac{|a_{j}^{-}(u^{2})|^{2}}{q_{n}}\\
		h_{j}^{(2)}(u) & =\frac{\overline{a_{j}^{+}(u^{2})}b_{j}^{+}(u^{2})}{q_{0}}+\frac{\overline{a_{j}^{-}(u^{2})}b_{j}^{-}(u^{2})}{q_{n}}\,
	\end{align*}
	for $0\leq j\leq n$
	and $u\in(0,\infty)$.
	\begin{lem}
		\label{lemkxx} Let $\Lambda\subset\bR_{0}^{+}$ be of finite measure,
		$p$ a step function with $n$ jumps 
		and $k_{\Lambda}$ the reproducing kernel for $PW_{\Lambda}(A_{p})$.
		Then 
		\begin{align}
			k_{\Lambda}(y,y)= \sum _{j=0}^n
			\Big(\dfrac{1}{\pi}\int_{\Lambda^{1/2}}\dfrac{h_{j}^{(1)}(u)}{\kappa(u)}\,du+\dfrac{1}{\pi}\emph{Re}\mathcal{F}\big(\dfrac{h_{j}^{(2)}}{\kappa}\cdot\chi_{\Lambda^{1/2}}\big)(2q_{j}y)
			\Big) \chi _{(t_j,t_{j+1}]}(y) \,.\label{kxxthm}
		\end{align}
	\end{lem}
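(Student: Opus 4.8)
The plan is to substitute the explicit fundamental solutions from Theorem~\ref{lemsol} into the kernel formula \eqref{kref} and then collapse the resulting terms using the Wronskian identities of Lemma~\ref{wronskthm}. Fix $y\in\bR$ and let $j$ be the unique index with $y\in I_j$. By \eqref{pw1} and \eqref{pw2}, on $I_j$ we have $\Phi^{\pm}(u^2,y)=a_j^{\pm}(u^2)e^{iq_j uy}+b_j^{\pm}(u^2)e^{-iq_j uy}$, hence
\[
|\Phi^{\pm}(u^2,y)|^2=|a_j^{\pm}(u^2)|^2+|b_j^{\pm}(u^2)|^2+2\,\mathrm{Re}\!\left(\overline{a_j^{\pm}(u^2)}\,b_j^{\pm}(u^2)\,e^{-2iq_j uy}\right).
\]
Plugging this into the definition \eqref{eq:cc6} of $\vartheta$ with $x=y$ and regrouping yields
\[
\vartheta(u,y,y)=\Big(\tfrac{|a_j^+(u^2)|^2}{q_0}+\tfrac{|a_j^-(u^2)|^2}{q_n}\Big)+\Big(\tfrac{|b_j^+(u^2)|^2}{q_0}+\tfrac{|b_j^-(u^2)|^2}{q_n}\Big)+2\,\mathrm{Re}\!\left(h_j^{(2)}(u)\,e^{-2iq_j uy}\right).
\]

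The key step is now identity \eqref{hidentity} of Lemma~\ref{wronskthm}: it says precisely that the first two bracketed quantities are equal, and by definition each of them is $h_j^{(1)}(u)$. Therefore $\vartheta(u,y,y)=2h_j^{(1)}(u)+2\,\mathrm{Re}\!\left(h_j^{(2)}(u)\,e^{-2iq_j uy}\right)$, and substituting into \eqref{kref} gives
\[
k_\Lambda(y,y)=\frac{1}{2\pi}\int_{\Lambda^{1/2}}\frac{\vartheta(u,y,y)}{\kappa(u)}\,du=\frac1\pi\int_{\Lambda^{1/2}}\frac{h_j^{(1)}(u)}{\kappa(u)}\,du+\frac1\pi\,\mathrm{Re}\int_{\Lambda^{1/2}}\frac{h_j^{(2)}(u)}{\kappa(u)}\,e^{-2iq_j uy}\,du.
\]
For the second integral we note that the connection coefficients are almost periodic trigonometric polynomials, hence bounded on $\bR$, so $h_j^{(1)},h_j^{(2)}$ are bounded on $(0,\infty)$; together with $1/\kappa\le q_0q_n$ from \eqref{eq:cc5} and $|\Lambda^{1/2}|<\infty$ this shows $(h_j^{(2)}/\kappa)\chi_{\Lambda^{1/2}}\in L^1(\bR)$, so the integral equals $\mathcal F\!\big(\tfrac{h_j^{(2)}}{\kappa}\chi_{\Lambda^{1/2}}\big)(2q_j y)$. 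Finally, since $I_0,\dots,I_n$ partition $\bR$, multiplying the identity valid for $y\in I_j$ by $\chi_{(t_j,t_{j+1}]}(y)$ and summing over $0\le j\le n$ produces exactly \eqref{kxxthm}.

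The computation is essentially routine, and there is no real obstacle; the one genuinely load-bearing ingredient is the Wronskian identity \eqref{hidentity}, which is what makes the $|a_j^{\pm}|^2$ and $|b_j^{\pm}|^2$ contributions merge into a single factor $2h_j^{(1)}$ — without it one would only get a messier, non-symmetric expression. The only two points requiring a line of care are: (a) matching the normalization of $\mathcal F$, namely $\mathcal F g(\xi)=\int_{\bR} g(u)e^{-i\xi u}\,du$, where in fact the sign of the exponent is immaterial since only the real part enters (and $\mathrm{Re}\,\bar z=\mathrm{Re}\,z$); and (b) the $L^1$-integrability of the integrands, which both legitimizes calling the second term a Fourier transform and explains why the statement needs only that $\Lambda$ have finite Lebesgue measure, with no boundedness assumption.
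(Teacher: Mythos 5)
Your proposal is correct and follows essentially the same route as the paper: substitute the local form of $\Phi^{\pm}$ on $I_j$ into \eqref{kref}, regroup, and invoke identity \eqref{hidentity} to merge the $|a_j^{\pm}|^2$ and $|b_j^{\pm}|^2$ contributions into $2h_j^{(1)}$, after which the oscillatory term is read off as a Fourier transform. The extra remarks on $L^1$-integrability and the normalization of $\mathcal{F}$ are sensible but not points where the paper's argument differs.
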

	
	\begin{proof}
		We evaluate the diagonal of $k_{\Lambda}$ using Proposition \ref{thmunifbdd}.
		Fix $y\in I_{j}$ for some $0\leq j\leq n$. Using the formula \eqref{kref}
		for $k_{\Lambda}$, we obtain 
		\begin{align*}
			\vartheta(u,y,y) & =\frac{1}{q_{0}}|\Phi^{+}(u^{2},y)|^{2}+\frac{1}{q_{n}}|\Phi^{-}(u^{2},y)|^{2}\\
			& =\frac{1}{q_{0}}|a_{j}^{+}(u^{2})e^{iuq_{j}y}+b_{j}^{+}(u^{2})e^{-iuq_{j}y}|^{2}+\frac{1}{q_{n}}|a_{j}^{-}(u^{2})e^{iuq_{j}y}+b_{j}^{-}(u^{2})e^{-iuq_{j}y}|^{2}\\
			& =\left(\frac{|a_{j}^{+}(u^{2})|^{2}}{q_{0}}+\frac{|b_{j}^{+}(u^{2})|^{2}}{q_{0}}\right)+\left(\frac{|a_{j}^{-}(u^{2})|^{2}}{q_{n}}+\frac{|b_{j}^{-}(u^{2})|^{2}}{q_{n}}\right)\\
			& +2\textrm{Re}\left[\left(\frac{\overline{a_{j}^{+}(u^{2})}b_{j}^{+}(u^{2})}{q_{0}}+\frac{\overline{a_{j}^{-}(u^{2})}b_{j}^{-}(u^{2})}{q_{n}}\right)e^{-2iuq_{j}y}\right]\\
			& =2\left(\frac{|a_{j}^{+}(u^{2})|^{2}}{q_{0}}+\frac{|a_{j}^{-}(u^{2})|^{2}}{q_{n}}\right)+2\textrm{Re}\left[\left(\frac{\overline{a_{j}^{+}(u^{2})}b_{j}^{+}(u^{2})}{q_{0}}+\frac{\overline{a_{j}^{-}(u^{2})}b_{j}^{-}(u^{2})}{q_{n}}\right)e^{-2iuq_{j}y}\right]\,,
		\end{align*}
		where in the last identity we have used  \eqref{hidentity}.
		Then 
		\begin{align*}
			k_{\Lambda}(y,y) & =\dfrac{1}{2\pi}\int_{\Lambda^{1/2}}\dfrac{\vartheta(u,y,y)}{\kappa(u)}\,du\\
			& =\dfrac{1}{\pi}\int_{\Lambda^{1/2}}\dfrac{h_{j}^{(1)}(u)}{\kappa(u)}\,du+\dfrac{1}{\pi}\int_{\Lambda^{1/2}}\dfrac{\textrm{Re}(h_{j}^{(2)}(u)e^{-2iq_{j}yu})}{\kappa(u)}\,du\\
			& =\dfrac{1}{\pi}\int_{\Lambda^{1/2}}\dfrac{h_{j}^{(1)}(u)}{\kappa(u)}\,du+\dfrac{1}{\pi}\textrm{Re}\,\mathcal{F}\left(\dfrac{h_{j}^{(2)}}{\kappa}\cdot\chi_{\Lambda^{1/2}}\right)(2q_{j}y)
		\end{align*}
		as claimed. 
	\end{proof}
	Note that the first term $\int h^{(1)}_j$ depends only on the
	interval $I_{j}$ , but not on $y\in I_j$.
	
	In the following theorem we  
	compute the averaged trace of the kernel $k$. We recall that  the
	notation $f\lesssim g$ means that  there exists
	$C\geq0$ such that $f\leq Cg$. 
	\begin{thm}
		\label{limmup} Let $\Lambda\subset\bR_{0}^{+}$ be of finite measure,
		$p$ a piecewise constant function and $k_{\Lambda}$ the reproducing
		kernel of $PW_{\Lambda}(A_{p})$. Suppose $I$ is a large interval
		such that $I\not\subset[t_{1},t_{n}]$. Then 
		\begin{align*}
			\left|\dfrac{1}{\mu_{p}(I)}\int_{I}k_{\Lambda}(y,y)\,dy-\dfrac{|\Lambda^{1/2}|}{\pi}\right|\lesssim\mu_{p}(I)^{-1/2}\,.
		\end{align*}
		In particular, 
		\[
		\emph{tr}^{-}=
		\liminf_{r\to\infty}\inf_{x\in\bR}\dfrac{1}{\mu_{p}(B_{r}(x))}\int_{B_{r}(x)}k_{\Lambda}(y,y)\,dy =
		\frac{|\Lambda^{1/2}|}{\pi} \, , 
		\]
		and likewise $\emph{tr}^{+}= \frac{|\Lambda^{1/2}|}{\pi}$.
	\end{thm}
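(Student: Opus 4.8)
The plan is to integrate the explicit diagonal formula \eqref{kxxthm} over $I$, separating the contribution of the two unbounded intervals $I_0,I_n$ — which produces the main term $|\Lambda^{1/2}|/\pi$ — from that of the bounded block $I\cap[t_1,t_n)$ and from the oscillatory Fourier terms. The first point is to make the ``constant'' summand $\tfrac1\pi\int_{\Lambda^{1/2}}h_j^{(1)}/\kappa\,du$ of \eqref{kxxthm} explicit for $j=0$ and $j=n$. On $I_0$ one has $a_0^-\equiv0$, so $h_0^{(1)}(u)=|a_0^+(u^2)|^2/q_0$, and since $\kappa(u)=|a_0^+(u^2)|^2/q_0^2$ by \eqref{eq:cc5} this forces $h_0^{(1)}/\kappa\equiv q_0$, whence $\tfrac1\pi\int_{\Lambda^{1/2}}h_0^{(1)}/\kappa\,du=q_0|\Lambda^{1/2}|/\pi$. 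Symmetrically, $a_n^+\equiv1$, $b_n^+\equiv0$ together with Corollary~\ref{kappnew} in the form $|b_n^-|^2=q_n/q_0+|a_n^-|^2$ give $h_n^{(1)}=|b_n^-|^2/q_n$ and $\kappa=|b_n^-|^2/q_n^2$, so $h_n^{(1)}/\kappa\equiv q_n$ and $\tfrac1\pi\int_{\Lambda^{1/2}}h_n^{(1)}/\kappa\,du=q_n|\Lambda^{1/2}|/\pi$. (Here $\Lambda^{1/2}$ has finite measure, since $|\Lambda^{1/2}|=\int_\Lambda(2\sqrt\lambda)^{-1}\,d\lambda\le|\Lambda|^{1/2}<\infty$, as already used in Theorem~\ref{thmunifbdd}.)

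Next I would control the oscillatory term. Put $g_j=(h_j^{(2)}/\kappa)\,\chi_{\Lambda^{1/2}}$; since the connection coefficients are bounded almost periodic polynomials and $0<1/\kappa\le q_0q_n$, each $g_j$ lies in $L^1(\bR)\cap L^2(\bR)$. By Cauchy--Schwarz and Plancherel, for any bounded interval $J$,
\[ \Big|\int_{J}\mathcal F g_j(2q_jy)\,dy\Big|=\tfrac1{2q_j}\big|\langle \mathcal F g_j,\chi_{2q_jJ}\rangle\big|\le\tfrac1{2q_j}\,\|\mathcal F g_j\|_2\,\|\chi_{2q_jJ}\|_2\lesssim\|g_j\|_2\,|J|^{1/2}, \]
so that the oscillatory part of $k_\Lambda(y,y)$ contributes only $O(|J|^{1/2})$ when integrated over a sub-interval $J$ of $I_0$ or $I_n$, with an implied constant independent of $J$.

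Finally I would split $I=I^-\cup I^0\cup I^+$ with $I^-=I\cap(-\infty,t_1)$, $I^0=I\cap[t_1,t_n)$, $I^+=I\cap[t_n,\infty)$, so that $|I^0|\le t_n-t_1$ and $\mu_p(I^0)\lesssim1$. On $I^\pm$ formula \eqref{kxxthm} reads $k_\Lambda(y,y)=q_0|\Lambda^{1/2}|/\pi+\tfrac1\pi\mathrm{Re}\,\mathcal F g_0(2q_0y)$ resp.\ $q_n|\Lambda^{1/2}|/\pi+\tfrac1\pi\mathrm{Re}\,\mathcal F g_n(2q_ny)$, while on $I^0$ I would only use $0\le k_\Lambda(y,y)\le C_2$ from Lemma~\ref{bdddiag}. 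Since $\mu_p(I^-)=q_0|I^-|$ and $\mu_p(I^+)=q_n|I^+|$, the previous two paragraphs yield $\int_I k_\Lambda(y,y)\,dy=\tfrac{|\Lambda^{1/2}|}{\pi}(q_0|I^-|+q_n|I^+|)+O(|I|^{1/2})+O(1)$, whereas $\tfrac{|\Lambda^{1/2}|}{\pi}\mu_p(I)=\tfrac{|\Lambda^{1/2}|}{\pi}(q_0|I^-|+q_n|I^+|)+O(1)$; subtracting and using $\mu_p(I)\asymp|I|$ (equivalence of $\mu_p$ and Lebesgue measure) gives $\big|\int_I k_\Lambda(y,y)\,dy-\tfrac{|\Lambda^{1/2}|}{\pi}\mu_p(I)\big|\lesssim\mu_p(I)^{1/2}$, which is the asserted estimate after dividing by $\mu_p(I)$. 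For the trace statement, for $r>\tfrac12(t_n-t_1)$ the ball $B_r(x)$ is an interval of length $2r$ not contained in $[t_1,t_n]$, all the constants above are independent of $x$, and $\mu_p(B_r(x))\ge 2r/\sqrt{\max_j p_j}\to\infty$; hence $\tfrac1{\mu_p(B_r(x))}\int_{B_r(x)}k_\Lambda$ converges to $|\Lambda^{1/2}|/\pi$ uniformly in $x$ as $r\to\infty$, so $\mathrm{tr}^-=\mathrm{tr}^+=|\Lambda^{1/2}|/\pi$. The argument is largely bookkeeping once \eqref{kxxthm} and the identifications on $I_0,I_n$ are available; the one genuinely analytic ingredient is the $O(|J|^{1/2})$ bound on the oscillatory Fourier integral, and the point requiring care — both there and in the trace statement — is that every implied constant be uniform over the interval, respectively over the centre $x$ of the ball.
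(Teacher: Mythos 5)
Your proof is correct and follows essentially the same route as the paper: both rest on the diagonal formula of Lemma \ref{lemkxx}, the identities $h_0^{(1)}/\kappa\equiv q_0$ and $h_n^{(1)}/\kappa\equiv q_n$ on the two unbounded intervals (giving the main term $|\Lambda^{1/2}|\mu_p(I)/\pi$ up to $O(1)$), a trivial bound on the bounded block $[t_1,t_n]$, and an $O(\mu_p(I)^{1/2})$ bound on the oscillatory contribution. The only immaterial difference is technical: you bound the oscillatory integral by Cauchy--Schwarz and Plancherel in the $y$-variable, whereas the paper first evaluates the inner $y$-integral to produce a $\sinc$ factor and then applies Cauchy--Schwarz in $u$; both yield the same $|J|^{1/2}$ estimate with constants uniform in the interval and in the centre $x$.
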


	\begin{proof}
		Let $p$ be a piecewise constant function with $n$ jumps at $\{t_{k}\}_{k=1}^{n}$. We may assume that
		the interval $I=[\alpha,\beta]\supseteq[t_{1},t_{n}]$. The two other
		cases are easier to handle. Since $ \frac{1}{\sqrt{p(x)}}
		\chi _{(t_j,t_{j+1}]}(x) =  q_j$ for $x\in (t_j, t_{j+1}]$, we obtain 
		\begin{align}
			\mu_{p}(I) & \int _{\alpha } ^\beta \frac{1}{\sqrt{p(x)}} \, dx
			=\left[q_{0}(t_{1}-\alpha)+q_{n}(\beta-t_{n})\right]+\sum_{j=1}^{n-1}q_{j}(t_{j+1}-t_{j})
			\, ,\label{measequiv}
		\end{align}
		and $\mu _p(I) \geq  (\beta -\alpha ) \min _{j} q_j$. 
		We first observe that 
		\[
		\dfrac{1}{\mu_{p}(I)}\int_{I}k_{\Lambda}(y,y)\,dy=\dfrac{1}{\mu_{p}(I)}\int_{I\setminus[t_{1},t_{n}]}k_{\Lambda}(y,y)\,dy+\dfrac{1}{\mu_{p}(I)}\int_{[t_{1},t_{n}]}k_{\Lambda}(y,y)\,dy\,.
		\]
		Since the second term in this some is of the form $C\mu_{p}(I)^{-1}$
		we can focus on the first term. We use the representation of the diagonal
		of $k_{\Lambda}$ obtained in Lemma \ref{lemkxx} and consider first
		the expression
		\[
		T_{1}=\dfrac{1}{\mu_{p}(I)\pi}\Big(\int_{\alpha{}_{\alpha}}^{t_{1}}\int_{\Lambda^{1/2}}\dfrac{h_{0}^{(1)}(u)}{\kappa(u)}\,du\,dy+\int_{t_{n}}^{\beta}\int_{\Lambda^{1/2}}\dfrac{h_{0}^{(1)}(u)}{\kappa(u)}\,du\,dy\Big)\,.
		\]
		Using  the initial conditions $a_{0}^{-}=b_{n}^{+}=0$, $a_{n}^{+}=b_{0}^{-}=1$
		and the  definition of $\kappa$ in \eqref{specmeasdmu}, we obtain 
		\[
		h_{0}^{(1)}(u)=\frac{|a_{0}^{+}(u^{2})|^{2}}{q_{0}}=q_{0}\kappa(u),\quad h_{n}^{(1)}(u)=\frac{|b_{n}^{-}(u^{2})|^{2}}{q_{n}}=q_{n}\kappa(u)\,,
		\]
		and consequently 
		\begin{align*}
			T_{1} &=\dfrac{|\Lambda^{1/2}|}{\mu_{p}(I)\pi}\left(q_{0}(t_{1}-\alpha)+q_{n}(\beta-t_{n})\right)\\
			&= \dfrac{|\Lambda^{1/2}|}{\mu_{p}(I)\pi} \Big( \mu _p(I) -
			\sum_{j=1}^{n-1}q_{j}(t_{j+1}-t_{j})\Big) =
			\dfrac{|\Lambda^{1/2}|}{\pi} + \cO \big( \dfrac{1}{\mu _p(I)} \big).
		\end{align*}
		This leads to the estimate 
		\begin{align}
			\left|T_{1}-\dfrac{|\Lambda^{1/2}|}{\pi}\right|\lesssim\mu_{p}(I)^{-1}\lesssim\mu_{p}(I)^{-1/2}.\label{newT1til}
		\end{align}
		For the second term, we make a similar computation. Let 
		\begin{align*}
			T_{2} & =\dfrac{1}{\mu_{p}(I)\pi}\int_{I\setminus[t_{1},t_{n}]}\textrm{Re}\mathcal{F}\Big(\dfrac{h_{j}^{(2)}}{\kappa}\cdot\chi_{\Lambda^{1/2}}\Big)(2q_{j}y)\,dy\\
			& =\dfrac{1}{\mu_{p}(I)\pi}\textrm{Re}\left(\int_{\alpha}^{t_{1}}\int_{\Lambda^{1/2}}\dfrac{h_{0}^{(2)}(u)e^{-2iq_{0}yu}}{\kappa(u)}\,du\,dy+\int_{t_{n}}^{\beta}\int_{\Lambda^{1/2}}\dfrac{h_{n}^{(2)}(u)e^{-2iq_{n}yu}}{\kappa(u)}\,du\,dy\right)\,.
		\end{align*}
		Interchanging the order of integration and evaluating  the
		integral over $y$ first, we have 
		\begin{align*}
			T_{2} & =\dfrac{1}{\mu_{p}(I)\pi}\textrm{Re}\int_{\Lambda^{1/2}}\dfrac{h_{0}^{(2)}(u)e^{-2iq_{0}(t_{1}+\alpha)u}}{\kappa(u)}\cdot\dfrac{\sin(q_{0}(t_{1}-\alpha)u)}{q_{0}u}\,du\\
			& +\dfrac{1}{\mu_{p}(I)\pi}\textrm{Re}\int_{\Lambda^{1/2}}\dfrac{h_{n}^{(2)}(u)e^{-2iq_{n}(\beta+t_{n})u}}{\kappa(u)}\cdot\dfrac{\sin(q_{n}(\beta-t_{n})u)}{q_{n}u}\,du.
		\end{align*}
		With  the  Cauchy-Schwartz inequality,  this can be estimated as 
		\begin{align*}
			\left|T_{2}\right| &  \leq\dfrac{1}{\mu_{p}(I)\pi}\left(C_0\left(\int_{\Lambda^{1/2}}
			\dfrac{\sin^{2}(q_{0}(t_{1}-\alpha)u)}{(q_{0}u)^2}du\right)^{1/2}+C_n\left(\int_{\Lambda^{1/2}}\dfrac{\sin^{2}(q_{n}(\beta-t_{n})u)}{(q_{n}u)^2}du\right)^{1/2}\right)\\ 
			&
			=\dfrac{C_{0}}{\mu_{p}(I)\pi}\left(\frac{t_{1}-\alpha}{q_{0}}\int_{q_{0}(t_{1}-\alpha)\Lambda^{1/2}}\!\sinc
			^{2}\omega\,d\omega\right)^{1/2}+\dfrac{C_{n}}{\mu_{p}(I)\pi}\left(\frac{\beta-t_{n}}{q_{n}}\int_{q_{n}(\beta-t_{n})\Lambda^{1/2}}\!\sinc
			^{2}\omega\,d\omega\right)^{1/2}\\
			& \leq\dfrac{C}{\mu_{p}(I)\pi}\mu_{p}(I)^{1/2}\leq
			C'\mu_{p}(I)^{-1/2} \, .
		\end{align*}
		{} Putting these estimates together we obtain 
		\begin{align}
			\left|\dfrac{1}{\mu_{p}(I)}\int_{I}k_{\Lambda}(y,y)\,dy-\dfrac{|\Lambda^{1/2}|}{\pi}\right|=\left|
			T_1+T_2 - \dfrac{|\Lambda^{1/2}|}{\pi}\right| \leq C\mu_{p}(I)^{-1/2}\,.\label{dens}
		\end{align}
	\end{proof}
	
	\subsection{Proof of Theorem \ref{thm:density}}
	\begin{proof}
		Theorem \ref{thm:density} now follows from the general density theorem
		as stated in Theorem~\ref{fuehrthm}. In Lemmas~\ref{bdddiag} --- \ref{HAP} we
		have verified that the conditions on the reproducing kernel are
		satisfied, and in Theorem~\ref{limmup} we have computed the critical
		density given by the averaged trace of the kernel. 
	\end{proof}
\begin{rem*}
	We finish by mentioning that the assumptions on the spectral set
	$\Lambda $ can be weakened and Theorem~\ref{thm:density} remains true, when
	$\Lambda $ is just a set of finite measure.
	
	If the spectral set is not an interval $[0,\Omega ]$, then Proposition
	\ref{kerestlem} no longer holds, and therefore the proofs of Lemmas
	\ref{WLemma} and \ref{HAP} do  not work for general spectral
	sets. Nevertheless the conclusions of \ref{WLemma} and \ref{HAP} still
	hold whenever $\Lambda $ has finite measure. The proofs are more
	delicate and the interested reader is referred to~\cite{CelizDissertation}.
\end{rem*}

\def\cprime{$'$}


\begin{thebibliography}{10}

\bibitem{aceskafeichtinger1}
R.~Aceska and H.~Feichtinger.
\newblock Functions of variable bandwidth via time-frequency analysis tools.
\newblock {\em Journal of Mathematical Analysis and Applications}, 382(1):275
  -- 289, 2011.

\bibitem{aceskafeichtinger2}
R.~Aceska and H.~Feichtinger.
\newblock Reproducing kernels and variable bandwidth.
\newblock {\em Journal of Function Spaces and Applications}, 2012, 2012.

\bibitem{Brueller}
N.~N. {Brueller}, N.~{Peterfreund}, and M.~{Porat}.
\newblock Non-stationary signals: optimal sampling and instantaneous bandwidth
  estimation.
\newblock In {\em Proceedings of the IEEE-SP International Symposium on
  Time-Frequency and Time-Scale Analysis (Cat. No.98TH8380)}, pages 113--115,
  Oct 1998.

\bibitem{CelizDissertation}
M.~J. Celiz.
\newblock {\em Spaces of functions of variable bandwidth parametrized by
  piecewise constant functions}.
\newblock PhD thesis, Wien, 2022.
\newblock Universit{\"a}t Wien, Dissertation.

\bibitem{clarkpalmerlawrence}
J.~{Clark}, M.~{Palmer}, and P.~{Lawrence}.
\newblock A transformation method for the reconstruction of functions from
  nonuniformly spaced samples.
\newblock {\em IEEE Transactions on Acoustics, Speech, and Signal Processing},
  33(5):1151--1165, October 1985.

\bibitem{Dunford1988linear}
N.~Dunford and J.~T. Schwartz.
\newblock {\em Linear operators. {P}art {II}: {S}pectral theory. {S}elf adjoint
  operators in {H}ilbert space}.
\newblock With the assistance of William G. Bade and Robert G. Bartle.
  Interscience Publishers John Wiley \& Sons\ New York-London, 1963.

\bibitem{fefferman}
C.~Fefferman.
\newblock The {Bergman} kernel and biholomorphic mappings of pseudoconvex
  domains.
\newblock {\em Invent. Math.}, 26:1--65, 1974.

\bibitem{fuehr}
H.~F{\"u}hr, K.~Gr{\"o}chenig, A.~Haimi, A.~Klotz, and J.~L. Romero.
\newblock Density of sampling and interpolation in reproducing kernel {H}ilbert
  spaces.
\newblock {\em J. Lond. Math. Soc. (2)}, 96(3):663--686, 2017.

\bibitem{cpam}
K.~Gr{\"o}chenig and A.~Klotz.
\newblock What is variable bandwidth?
\newblock {\em Comm. Pure Appl. Math.}, 70(11):2039--2083, 2017.

\bibitem{apde}
K.~Gr{\"o}chenig and A.~Klotz.
\newblock Necessary density conditions for sampling and interpolation in
  spectral subspaces of elliptic differential operators.
\newblock {\em Anal. PDE}, 2023.
\newblock to appear.

\bibitem{Horiuchi1968}
K.~Horiuchi.
\newblock Sampling principle for continuous signals with time-varying bands.
\newblock {\em Information and Control}, 13:53--61, 1968.

\bibitem{pesensonzayed}
I.Pesenson and A.~Zayed.
\newblock Paley--{W}iener subspace of vectors in a {H}ilbert space with
  applications to integral transforms.
\newblock {\em Journal of Mathematical Analysis and Applications}, 353(2):566
  -- 582, 2009.

\bibitem{landau1967}
H.~J. Landau.
\newblock Necessary density conditions for sampling and interpolation of
  certain entire functions.
\newblock {\em Acta Math.}, 117:37--52, 1967.

\bibitem{kempf}
R.~Martin and A.~Kempf.
\newblock Function spaces obeying a time-varying bandlimit.
\newblock {\em Journal of Mathematical Analysis and Applications}, 458(2):1597
  -- 1638, 2018.

\bibitem{Pesenson2001}
I.~Pesenson.
\newblock Sampling of band-limited vectors.
\newblock {\em Journal of Fourier Analysis and Applications}, 7(1):93--100, Jan
  2001.

\bibitem{Schmied_2008}
M.~Schmied, R.~Sims, and G.~Teschl.
\newblock On the absolutely continuous spectrum of {S}turm-{L}iouville
  operators with applications to radial quantum trees.
\newblock {\em Oper. Matrices}, 2(3):417--434, 2008.

\bibitem{teschl2014mathematical}
G.~Teschl.
\newblock {\em Mathematical Methods in Quantum Mechanics}.
\newblock Graduate Studies in Mathematics. American Mathematical Society, 2014.

\bibitem{TuanZayed}
V.~K. Tuan and A.~I. Zayed.
\newblock Paley-{Wiener}-type theorems for a class of integral transforms.
\newblock {\em J. Math. Anal. Appl.}, 266(1):200--226, 2002.

\bibitem{weioppenheim}
D.~{Wei} and A.~V. {Oppenheim}.
\newblock Sampling based on local bandwidth.
\newblock In {\em 2007 Conference Record of the Forty-First Asilomar Conference
  on Signals, Systems and Computers}, pages 1103--1107, Nov 2007.

\bibitem{Weidmann1987spectral}
J.~Weidmann.
\newblock {\em Spectral theory of ordinary differential operators}.
\newblock Lecture notes in mathematics. Springer, 1987.

\bibitem{weidmann2003lineare2}
J.~Weidmann.
\newblock {\em Lineare Operatoren in Hilbertr{\"a}umen: Teil 2 Anwendungen}.
\newblock Mathematische Leitf{\"a}den. Vieweg+Teubner Verlag, 2003.

\end{thebibliography}

\end{document}